
\documentclass[a4paper,reqno]{amsart}


\usepackage{amsmath}
\usepackage{amsthm}
\usepackage{amssymb}
\usepackage{amscd} 

\usepackage{bbm} 
\usepackage{mathtools,mathrsfs} 



\usepackage{verbatim} 

\usepackage{enumitem}


\newtheoremstyle{newremark}
  {5pt}
  {5pt}
  {\rmfamily}
  {}
  {\rmfamily\bf}
  {.}
  {.5em}
  {}

\newtheorem{theorem}{Theorem}
\newtheorem{lemma}[theorem]{Lemma}
\newtheorem{corollary}[theorem]{Corollary}
\newtheorem{proposition}[theorem]{Proposition}

\theoremstyle{newremark}
\newtheorem{remark}[theorem]{Remark}
\newtheorem{definition}[theorem]{Definition}

\newtheorem*{definition*}{Definition} 
\newtheorem*{notations*}{Notations}

\numberwithin{theorem}{section}
\numberwithin{equation}{section}



\newcommand{\R}{\mathbb{R}} 

\newcommand{\C}{\mathbb{C}} 





























\def\XXint#1#2#3{{%
\setbox0=\hbox{$#1{#2#3}{\int}$}
\vcenter{\hbox{$#2#3$}}\kern-.5\wd0}}





\renewcommand{\leq}{\leqslant}
\renewcommand{\geq}{\geqslant}
\renewcommand{\subset}{\subseteq}


\newcommand{\res}{\mathop{\hbox{\vrule height 7pt width .5pt depth 0pt
\vrule height .5pt width 6pt depth 0pt}}\nolimits}

\newcommand{\eps}{\varepsilon}

\newcommand{\e}{{\rm e}}

\newcommand{\de}{{\rm d}}


\setlength{\textheight}{24cm} \setlength{\textwidth}{13cm}
\setlength{\topmargin}{-0.25cm}
\setlength{\oddsidemargin}{0.5cm} \setlength{\evensidemargin}{0.5cm}


\setcounter{tocdepth}{1}


\begin{document}


\title[\bf Minimizing $1/2$-harmonic maps into spheres]{Minimizing $1/2$-harmonic maps  into spheres}

\author{Vincent Millot}

\author{Marc Pegon}
\address{Universit\'e Paris-Diderot, Sorbonne Paris-Cit\'e, Sorbonne Universit\'e, CNRS, Laboratoire Jacques-Louis Lions, F-75013 Paris}
\email{millot@math.univ-paris-diderot.fr}
\email{mpegon@math.univ-paris-diderot.fr}



\begin{abstract}
In this article, we improve the partial regularity theory for minimizing $1/2$-harmonic maps of \cite{MS,Mos}  in the case where the target manifold is the $(m-1)$-dimensional sphere. For $m\geq 3$, we show that minimizing $1/2$-harmonic maps are smooth in dimension 2, and have a singular set of codimension at least~3 in higher dimensions. For $m=2$, we prove that, up to an orthogonal transformation, $x/|x|$ is the unique non trivial $0$-homogeneous minimizing $1/2$-harmonic map from the plane into the circle $\mathbb{S}^1$. As a corollary, each point singularity of a minimizing $1/2$-harmonic maps from a 2d domain into $\mathbb{S}^1$ has  a topological charge equal to $\pm1$. 

\end{abstract}


\maketitle


\tableofcontents


\section{Introduction}

In a serie of articles  \cite{DaRi1,DaRi2,Da1,Da2}, F. Da Lio \& T. Rivi\`ere have introduced and studied the  fractional {\sl $1/2$-harmonic maps} from the real line into a manifold. 
Given a compact smooth submanifold  $\mathcal{N}\subset \mathbb{R}^m$ without boundary, $1/2$-harmonic maps into $\mathcal{N}$ are defined as a critical points of the so-called $1/2$-Dirichlet energy under the constraint to be $\mathcal{N}$-valued. 
They naturally appear in several geometric problems such as minimal surfaces with free boundary, see \cite{Da2,DaMaRi,DaPig,Fra,Schi,Struwe}  and Section~\ref{seccirclespheres}. They also come into play in some Ginzburg-Landau models for supraconductivity, see e.g. \cite{BMRS} and  references therein.  The Euler-Lagrange equation satisfied by $1/2$-harmonic maps is in strong analogy with the standard harmonic map system. Instead of the usual Laplace operator, the equation involves the square root Laplacian as defined in Fourier space (i.e., the multiplier operator of symbol $2\pi|\xi|$), and it suffers the same pathologies regarding regularity. A main issue was then to prove the smoothness {\sl a priori} of weak solutions. It has been achieved in  \cite{DaRi1,DaRi2}, thus extending the famous regularity result of F. H\'elein for harmonic maps from surfaces \cite{Hel}. The notion of $1/2$-harmonic maps has  been  extended in \cite{MS,Mos} to higher dimensions, and partial regularity for minimizing or stationary $1/2$-harmonic maps established (again in analogy with minimizing/stationary harmonic maps  \cite{Bet,Ev,SU1}). Before going further, let us now describe in detail the mathematical framework. 
\vskip3pt

 Given a bounded open set $\Omega\subset \R^n$, the $1/2$-Dirichlet energy in $\Omega$ of a measurable map $u:\R^n\to\R^m$ is defined as 
$$\mathcal{E}(u,\Omega):=\frac{\gamma_n}{4}\iint_{(\R^n\times\R^n)\setminus(\Omega^c\times\Omega^c)} \frac{|u(x)-u(y)|^2}{|x-y|^{n+1}}\,\de x\de y\,,$$
where $\Omega^c:=\R^n\setminus\Omega$. 
The normalization constant $\gamma_n:=\pi^{-\frac{n+1}{2}}\Gamma(\frac{n+1}{2})$ is chosen in such a way that 
$$\mathcal{E}(u,\Omega)=\frac{1}{2}\int_{\R^n}\big|(-\Delta)^{\frac{1}{4}}u\big|^2\,\de x \quad\text{for every $u\in\mathscr{D}(\Omega)$}\,.$$
 Following \cite[Section 2]{MS}, we denote by $\widehat H^{1/2}(\Omega;\R^m)$ the Hilbert space made of all  $u\in L^2_{\rm loc}(\R^n;\R^m)$ such that $\mathcal{E}(u,\Omega)<\infty$, and we set 
$$\widehat H^{1/2}(\Omega;\mathcal{N}):=\big\{u\in \widehat H^{1/2}(\Omega;\R^m): u(x)\in\mathcal{N}\text{ for a.e. }x\in\R^n\big\}\,. $$

\begin{definition} 
A map $u\in\widehat H^{1/2}(\Omega;\mathcal{N})$ is said to be a weakly $1/2$-harmonic map in~$\Omega$ with values in $\mathcal{N}$ if 
$$\left[\frac{\de}{\de t}\mathcal{E}\left(\pi_{\mathcal{N}}(u+t\varphi),\Omega\right)\right]_{t=0}=0 \qquad \forall \varphi\in \mathscr{D}(\Omega;\R^m)\,,$$
where $\pi_{\mathcal{N}}$ denotes the nearest point projection on $\mathcal{N}$. 
\end{definition}

According to \cite[Section 4]{MS}, a weakly $1/2$-harmonic map in $\Omega$ satisfies the variational Euler-Lagrange equation
\begin{equation}\label{eqweakintro}
\frac{\gamma_n}{2}\iint_{(\R^n\times\R^n)\setminus(\Omega^c\times\Omega^c)} \frac{\big(u(x)-u(y)\big)\cdot\big(\varphi(x)-\varphi(y)\big)}{|x-y|^{n+1}}\,\de x\de y=0
\end{equation}
for every $\varphi\in H^{1/2}_{00}(\Omega;u^*T\mathcal{N})$. In other words,  \eqref{eqweakintro} holds for every $\varphi\in H^{1/2}_{00}(\Omega;\R^m)$
satisfying $\varphi(x)\in {\rm Tan}(u(x),\mathcal{N})$ for a.e. $x\in\Omega$ (recall that $H^{1/2}_{00}(\Omega)$ is the completion of $\mathscr{D}(\Omega)$ in $H^{1/2}(\R^n)$ for the norm topology). This equation is the weak formulation of the nonlinear system 
\begin{equation}\label{generalhalfharmmapeq}
(-\Delta)^{\frac{1}{2}}u\perp   {\rm Tan}(u,\mathcal{N})\quad\text{in $\Omega$}\,,
\end{equation}
where $(-\Delta)^{\frac{1}{2}}$ is the integro-differential operator given by 
$$(-\Delta)^{\frac{1}{2}}u(x):= {\rm p.v.}\left(\int_{\R^n}\frac{u(x)-u(y)}{|x-y|^{n+1}}\,\de y\right)\,. $$
(The notation ${\rm p.v.}$ means that the integral is taken in the Cauchy principal value sense.) In the case $\mathcal{N}=\mathbb{S}^{m-1}$ (the unit sphere of $\R^m$), the Lagrange multiplier relative to the constraint to be $\mathbb{S}^{m-1}$-valued takes a very simple form, and equation \eqref{generalhalfharmmapeq} rewrites (see \cite[Remark~4.3]{MS})
\begin{equation}\label{halfharmmapeqsphere}
(-\Delta)^{\frac{1}{2}}u(x)= \left(\frac{\gamma_n}{2}\int_{\R^n}\frac{|u(x)-u(y)|^2}{|x-y|^{n+1}}\,\de y\right)u(x)\quad\text{in $\Omega$}\,.
\end{equation}
In this case, it is clear that the right hand side in \eqref{halfharmmapeqsphere} has a priori no better integrability than $L^1(\Omega)$, and thus linear elliptic theory does not apply to determine   the smoothness of solutions.  In \cite{DaRi1,DaRi2} and subsequently in \cite{MazSchi}, the authors have shown that the source term can actually be rewritten in some ``fractional div-curl form". As a consequence, nonlinear compensations appear and the right hand side of \eqref{halfharmmapeqsphere} belongs in fact to the Hardy space. In dimension 1, it leads to continuity and then  full regularity as it happens for harmonic maps in dimension 2 \cite{Hel}.  In higher dimensions, we do not expect any kind of regularity for weakly $1/2$-harmonic maps into a general manifold, again by analogy with weakly harmonic maps in dimensions greater than three \cite{Rivcterex}. However, some partial regularity does hold for minimizing (or at least stationary) $1/2$-harmonic maps. 

\begin{definition} 
A map $u\in\widehat H^{1/2}(\Omega;\mathcal{N})$ is said to be a minimizing $1/2$-harmonic map in $\Omega$ with values in $\mathcal{N}$ if 
$$\mathcal{E}(u,\Omega)\leq \mathcal{E}(v,\Omega) $$
for every competitor $v\in\widehat H^{1/2}(\Omega;\mathcal{N})$ such that  ${\rm spt}(v-u)\subset\Omega$. 
\end{definition}

The result of \cite{MS,Mos} asserts that a minimizing $1/2$-harmonic map $u$ in $\Omega$ belongs to $C^\infty\big(\Omega\setminus {\rm sing}(u)\big)$ where ${\rm sing}(u)$ is the {\sl singular set}  of $u$  in $\Omega$ defined as 
\begin{equation}\label{defsingsetu}
 {\rm sing}(u):=\Omega\setminus\big\{x\in\Omega: \text{$u$ is continuous in a neighborhood of $x$} \big\}\,,
 \end{equation}
which is a relatively closed subset of $\Omega$. Moreover, ${\rm dim}_{\mathcal{H}}\, {\rm sing}(u)\leq n-2$ for $n\geq 3$, and ${\rm sing}(u)$ is locally finite in $\Omega$ for $n=2$ (the notation ${\rm dim}_{\mathcal{H}}$ stands for the Hausdorff dimension), see Corollary \ref{reghalfharm}. 
\vskip3pt

The main purpose of this article is to improve this general regularity result in the case of minimizing $1/2$-harmonic maps into the sphere $\mathbb{S}^{m-1}$. In a first direction, we prove that the size of the singular set can be reduced in case of two or higher dimensional spheres. 

\begin{theorem}\label{mainthm1}
Assume that $m\geq 3$. Let $\Omega\subset  \R^n$ be a smooth bounded open set. If $u\in \widehat  H^{1/2}(\Omega;\mathbb{S}^{m-1})$ is a minimizing $1/2$-harmonic map in $\Omega$, then ${\rm sing}(u)=\emptyset$ for $n\leq 2$,  ${\rm sing}(u)$ is locally finite in $\Omega$ for $n=3$, and  ${\rm dim}_{\mathcal H}\,{\rm sing}(u)\leq n-3$ for $n\geq 4$. 
\end{theorem}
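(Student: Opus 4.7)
The plan is to use Federer--Almgren dimension reduction in the spirit of \cite{MS,Mos}, sharpened by a classification of $0$-homogeneous minimizing tangent maps into $\mathbb{S}^{m-1}$ when $m\geq 3$. Following the Caffarelli--Silvestre extension framework of \cite{MS}, $u$ lifts to a minimizer $U$ of the extended (weighted) Dirichlet energy on $\R^{n+1}_+$ under the boundary constraint $U(\cdot,0)\in\mathbb{S}^{m-1}$ on $\Omega$. The scale-invariant monotonicity formula for this extended energy yields, at any $x_0\in{\rm sing}(u)$ and along a suitable blow-up sequence, a $0$-homogeneous limit $\Phi$ that is itself minimizing; its trace $\phi:\R^n\to\mathbb{S}^{m-1}$ is a $0$-homogeneous minimizing $1/2$-harmonic map. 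Federer's iteration then reduces the theorem to showing that every such $\phi$ on $\R^k$ with $k\leq 2$ is constant.

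The core of the proof is this classification. For $k=1$, the regularity result of Da Lio--Rivi\`ere (\cite{DaRi1,DaRi2}) forces $\phi$ to be continuous, and a $0$-homogeneous continuous function on $\R$ is automatically constant. For $k=2$, Corollary \ref{reghalfharm} gives that ${\rm sing}(\phi)$ is locally finite in $\R^2$; by $0$-homogeneity this set is dilation invariant, hence ${\rm sing}(\phi)\subseteq\{0\}$, and $\phi$ is smooth on $\R^2\setminus\{0\}$, determined by a smooth loop $\gamma=\phi|_{\mathbb{S}^1}:\mathbb{S}^1\to\mathbb{S}^{m-1}$. Since $\pi_1(\mathbb{S}^{m-1})=0$ for $m\geq 3$, $\gamma$ is null-homotopic; fix a smooth null-homotopy $h:[0,1]\times\mathbb{S}^1\to\mathbb{S}^{m-1}$ with $h(1,\cdot)=\gamma$ and $h(0,\cdot)\equiv p$. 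I would then build an explicit competitor $\widetilde\Phi$ on the half-ball $B_1^+\subset\R^{n+1}_+$, agreeing with $\Phi$ on the spherical part of $\partial B_1^+$, having trace $\gamma$ on $\partial B_1\times\{0\}$, and interpolating along $h$ with a radial cutoff so that $\widetilde\Phi(0)=p$. A scaling computation comparing the extended energies should give a strictly smaller value, contradicting the minimality of $\Phi$ unless $\gamma$ is constant.

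With the classification in hand, the Federer--Almgren iteration of \cite{MS,Mos} applies verbatim and upgrades the existing bound $\dim_{\mathcal H}{\rm sing}(u)\leq n-2$ to the sharper conclusions: ${\rm sing}(u)=\emptyset$ for $n\leq 2$, ${\rm sing}(u)$ locally finite for $n=3$, and $\dim_{\mathcal H}{\rm sing}(u)\leq n-3$ for $n\geq 4$. The main obstacle is the $k=2$ energy comparison: unlike the local harmonic map setting, where the logarithmic divergence of the Dirichlet energy of a non-constant $0$-homogeneous map already suffices, here the argument must be carried out either for the nonlocal $1/2$-energy (whose long-range interactions couple the modification on $B_1$ with the values outside) or, more cleanly, for the extended weighted Dirichlet energy on $B_1^+\subset\R^3_+$. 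In either form, the null-homotopy must respect the sphere constraint $|\widetilde\phi|=1$ while strictly lowering the energy; a careful choice of $h$ (e.g.\ along geodesics of $\mathbb{S}^{m-1}$) combined with a judicious radial cutoff seems the most promising route.
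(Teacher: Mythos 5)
Your reduction to constancy of $0$-homogeneous minimizing $1/2$-harmonic maps from $\R^2$ into $\mathbb{S}^{m-1}$ is exactly the paper's reduction (via what appears as Corollary~\ref{cordimreduc}), and the $k=1$ case is handled as you say. The problem is the $k=2$ step, which you correctly flag as ``the main obstacle'' but then sketch with an argument that does not work. A non-constant $0$-homogeneous map $\phi:\R^2\to\mathbb{S}^{m-1}$ has \emph{finite} local $1/2$-energy and its harmonic extension $\Phi$ to $B_1^+\subset\R^3_+$ has \emph{finite} Dirichlet energy (scaling is like a $3$-dimensional harmonic map, not a $2$-dimensional one). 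So there is no logarithmic divergence to exploit, and being null-homotopic is nowhere near sufficient for an energy-decreasing competitor: the map $u_\star(x)=x/|x|$ into $\mathbb{S}^1$ is $0$-homogeneous with finite extended energy and is in fact minimizing (Theorem~\ref{mainthm2}), and in the local analogue $x/|x|:\R^3\to\mathbb{S}^2$ is minimizing despite $\pi_2(\mathbb{S}^2)\neq 0$ playing the role your $\pi_1$ plays. Nothing in your sketch of the null-homotopy/radial-cutoff competitor produces a strict energy deficit, and generically it does not.

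The paper's actual argument for the $k=2$ step has two genuinely needed ingredients that your proposal omits. First, it proves that any non-constant $1/2$-harmonic circle $g:\mathbb{S}^1\to\mathbb{S}^{m-1}$ has equatorial image (Corollary~\ref{classifthm}), by observing that the harmonic extension $w_g$ to $\mathbb{D}$ is a conformal minimal disk meeting $\partial B^m$ orthogonally, and then invoking the Fraser--Schoen uniqueness theorem \cite{FS} (extending Nitsche for $m=3$). This also pins down $\mathcal{E}(g,\mathbb{S}^1)=\pi d$ with $d\geq 1$. Second, it uses a second-variation argument (Proposition~\ref{trivtangmap}), not a competitor: perturb $u_0$ in the constant direction $e_m$ normal to the equatorial circle (a variation that preserves the $\mathbb{S}^{m-1}$-constraint to second order), and expand the energy. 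The resulting stability inequality is precisely a Hardy-type inequality of the form
\begin{equation*}
\iint_{\R^2\times\R^2}\frac{|\zeta(x)-\zeta(y)|^2}{|x-y|^3}\,\de x\de y \;\geq\; 4\pi d\int_{\R^2}\frac{\zeta^2}{|x|}\,\de x
\end{equation*}
for all radial $\zeta\in C^\infty_c(\R^2)$, which is then compared against the \emph{sharp} fractional Hardy inequality with constant $C_\sharp=8\pi(\Gamma(3/4)/\Gamma(1/4))^2<4\pi$; since $d\geq 1$ this is a contradiction. This quantitative step, tied to the explicit value of the optimal Hardy constant, is what makes the argument go through; it has no counterpart in your null-homotopy construction, and your sketch cannot be repaired without introducing something equivalent to it.
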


For $m=2$, i.e., in the case of minimizing $1/2$-harmonic maps into $\mathbb{S}^1$, such improved regularity cannot hold for topological reasons, even in dimension 2. To illustrate this fact, let us consider the following variational problem
$$\min\Big\{\mathcal{E}(u,\mathbb{D}): u\in\widehat H^{1/2}(\mathbb{D};\mathbb{S}^1)\,, u(x)=g(x/|x|)\text{ for a.e. }x\in\mathbb{D}^c\Big\}\,, $$
where $\mathbb{D}$ denotes the open unit disc in $\R^2$, and $g:\mathbb{S}^1\to \mathbb{S}^1$ is a smooth given map of non vanishing topological degree. Existence of minimizers easily follows from the direct method of calculus of variations, and any minimizer is obviously a minimizing $1/2$-harmonic map in $\mathbb{D}$. On the other hand, the degree condition on $g$ implies that $g$ does not admit a continuous extension to the whole disc $\overline{\mathbb{D}}$, and thus any minimizer must have at least one singular point. In dimension 2, we already know that the set of singularities is locally finite, and our purpose is to give a description of ``their shape".  This description relies on a blow-up analysis near a singular point (see Section \ref{subsecthm3}), and the study of all possible blow-up limits, usually called {\sl tangent maps}. They turn out to be $0$-homogeneous and minimizing $1/2$-harmonic maps over the whole space (i.e., minimizing  in every ball).  Our next theorem provides the classification of all $0$-homogeneous minimizing $1/2$-harmonic maps from $\R^2$ into $\mathbb{S}^1$.

\begin{theorem}\label{mainthm2}
The map $u_\star:\R^2\to\mathbb{S}^1$ given by $u_\star(x):=\frac{x}{|x|}$ is a minimizing $1/2$-harmonic map in $\R^2$. Moreover, it is the unique non constant $0$-homogeneous minimizing $1/2$-harmonic map up to an orthogonal transformation. In other words,  if $u\in H^{1/2}_{\rm  loc}(\R^2;\mathbb{S}^1)$ is a non constant $0$-homogeneous  minimizing $1/2$-harmonic map in~$\R^2$, then there exists $A\in O(2,\R)$ such that $u(x)=u_\star(Ax)$ for every $x\in\R^2\setminus\{0\}$.
\end{theorem}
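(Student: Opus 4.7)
\textbf{Plan for Theorem \ref{mainthm2}.} The proof splits into (a) showing that $u_\star$ is minimizing, and (b) showing that every non-constant $0$-homogeneous minimizing $1/2$-harmonic map equals $u_\star\circ A$ for some $A\in O(2,\R)$.

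For (a), I would first compute the Poisson harmonic extension of $u_\star$ to $\R^3_+$: in spherical coordinates $(r,\phi,\theta)$ it equals $\tan(\phi/2)(\cos\theta,\sin\theta)$, or equivalently $U(x,x_3)=x/(\sqrt{|x|^2+x_3^2}+x_3)$, a $0$-homogeneous harmonic map on $\R^3_+\setminus\{0\}$. Differentiating yields $\partial_{x_3}U(x,0^+)=-x/|x|^2$, whence $u_\star$ satisfies the Euler-Lagrange equation \eqref{halfharmmapeqsphere} on $\R^2\setminus\{0\}$ with Lagrange multiplier proportional to $|x|^{-1}$. Upgrading to full minimality amounts to comparing with arbitrary competitors $v$ such that $v=u_\star$ on $B_R^c$. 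Via the Caffarelli-Silvestre formulation of $\mathcal{E}$ as a weighted Dirichlet integral on $\R^3_+$, I would adapt the Brezis-Coron-Lieb argument for $x/|x|\colon B^3\to\mathbb{S}^2$ to produce a sharp Jacobian/topological lower bound for $\mathcal{E}$ depending only on the boundary degree, saturated exactly by $u_\star$.

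For (b), let $u$ be a non-constant $0$-homogeneous minimizing $1/2$-harmonic map. By Corollary~\ref{reghalfharm}, $u$ is smooth on $\R^2\setminus\{0\}$, hence $w:=u|_{\mathbb{S}^1}\colon\mathbb{S}^1\to\mathbb{S}^1$ is smooth of some degree $d\in\Z$. A direct change of variables $(x,y)\mapsto(Rx,Ry)$ in the defining double integral gives the scaling $\mathcal{E}(u,B_R)=R\,\mathcal{E}(u,B_1)$, reducing to the unit disk. Evaluating \eqref{halfharmmapeqsphere} at a boundary point $x=e^{i\theta}\in\mathbb{S}^1$ and using the elementary identity
$$
\int_0^\infty\frac{r'\,\de r'}{(1-2r'\cos\alpha+r'^2)^{3/2}}=\frac{1}{1-\cos\alpha}\,,
$$
the principal-value integral reduces to the fractional Laplacian on the circle, yielding that $w$ is itself a $1/2$-harmonic map $\mathbb{S}^1\to\mathbb{S}^1$. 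If $d=0$, the lift of $w$ to a real-valued $1/2$-harmonic function on $\mathbb{S}^1$ must be constant, forcing $u$ to be constant; hence $d\neq 0$. Combined with the classification of $1/2$-harmonic maps $\mathbb{S}^1\to\mathbb{S}^1$ as boundary traces of finite Blaschke products, $w$ is a Blaschke product of degree $|d|$ (or its complex conjugate for $d<0$).

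Two steps remain: (i) $|d|=1$, and (ii) for $|d|=1$, $w$ is (up to orientation) a rotation of the identity. For (i), I would construct a competitor by splitting the degree-$d$ singularity at the origin into $|d|$ separated degree-$\pm 1$ singularities inside a small disk; the sharp lower bound from (a) forces a strict energy decrease, contradicting minimality. For (ii), $w(z)=\lambda(z-a)/(1-\bar a z)$ (or its conjugate) for some $\lambda\in\mathbb{S}^1$ and $a\in\D$; I would express $\mathcal{E}(u,B_1)$ as a functional of $a\in\D$ and show by a convexity/symmetry argument that it is strictly minimized at $a=0$, whence $u(x)=A(x/|x|)=u_\star(Ax)$ for some $A\in O(2,\R)$. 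The main obstacle is Part (a): the non-locality of $\mathcal{E}$ prevents a clean localization of the Brezis-Coron-Lieb Jacobian argument, so careful bookkeeping of interaction terms between $B_R$ and $B_R^c$ is needed. Once this sharp bound is established, the splitting step in (b)(i) and the one-parameter optimization in (b)(ii) are comparatively routine.
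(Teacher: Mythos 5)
Your plan for Part (a) is essentially the paper's route: pass to the harmonic extension $u_\star^\e(\mathbf{x}) = x/(|\mathbf{x}|+x_3)$ (Lemma \ref{formextxmodx}), use the pointwise inequality $|\nabla v|^2 \ge |H(v)|$, and invoke a Jacobian/transportation lower bound in the spirit of Brezis--Coron--Lieb. However, the "interaction terms between $B_R$ and $B_R^c$" that you worry about do not actually arise in the argument, because the minimality-transfer Theorem \ref{equivmin} converts the nonlocal problem into a \emph{local} free-boundary Dirichlet problem in a half-ball. Once you are in $B_1^+$ with $v = u_\star^\e$ near $\partial^+ B_1$, the argument is entirely classical: one computes $T(g)$ via Proposition \ref{calcjac} and applies the measure-theoretic minimax Theorem \ref{thmBCL} with the push-forward measure $\mu = \frac{1}{\pi}(1+x_3)^{-2}\mathcal{H}^2\res\partial^+ B_1$. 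You never need to track boundary interaction terms in the nonlocal energy.

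For Part (b), the identification of the trace as a $1/2$-harmonic circle (Proposition \ref{profhom1/2harm}) and the Blaschke classification (Theorem \ref{classifdemicirc}) are correct and match the paper, and your convexity/symmetry idea in step (ii) is a plausible alternative to the paper's Pohozaev-type balance condition $\int_{\partial^+ B_1}|\nabla u_0^\e|^2\,x\,\de\mathcal{H}^2 = 0$ from domain variations (Proposition \ref{casd=1}).

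The genuine gap is in (b)(i). Your assertion that splitting the degree-$d$ singularity into $|d|$ singularities of degree $\pm1$ and "the sharp lower bound from (a) forces a strict energy decrease" does not hold: the Jacobian lower bound is additive over singular degrees, so a competitor with $|d|$ unit singularities of total degree $d$ has exactly the same lower bound $\pi|d|$ as a single degree-$d$ singularity. The lower bound gives no information about which configuration is preferable; you must actually \emph{construct} a competitor with strictly smaller energy and estimate it, and this is where the paper's proof becomes hard. It requires an intermediate and non-obvious bound on the moduli of the Blaschke zeros (Lemma \ref{lemmodw}, giving $\max_j|\alpha_j|\le 1/3$ via a separate competitor moving a zero radially to $\partial\mathbb{D}$), and then a second, quite intricate competitor built via the Cayley transform and a one-parameter Möbius squeeze (Proposition \ref{competitorconcl}), culminating in the numerical inequality $4\int_0^1 F_2(t)\,\de t\approx 1.93 < 2$. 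Without these constructions, your outline leaves the central step of the uniqueness proof unresolved. A minor additional slip: the lift $\psi$ of a degree-zero circle map $w = e^{i\psi}$ is not itself a $1/2$-harmonic \emph{function}, so that argument for $d\neq 0$ is faulty as stated; the correct route is just that Theorem \ref{classifdemicirc} already forces $|d|\ge 1$ for any non-constant $1/2$-harmonic circle.
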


As a corollary of Theorem \ref{mainthm2}, we obtain that that a minimizing $1/2$-harmonic map from a two dimensional domain into $\mathbb{S}^1$ must have a degree $\pm 1$ at each  singularity.  The topological degree at a singular point is here defined as the degree of the restriction to any small circle surrounding the point.

\begin{theorem}\label{mainthm3}
Let $\Omega\subset  \R^2$ be a smooth bounded open set. If $u\in \widehat  H^{1/2}(\Omega;\mathbb{S}^{1})$ is a minimizing $1/2$-harmonic map in $\Omega$ and $a\in \Omega\cap{\rm sing}(u)$, then   ${\rm deg}(u,a)\in\{+1,-1\}$. 
\end{theorem}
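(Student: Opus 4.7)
The plan is to deduce Theorem \ref{mainthm3} from Theorem \ref{mainthm2} by a blow-up argument at the singular point $a$, and then invoking stability of the topological degree under strong $H^{1/2}$ convergence of $\mathbb{S}^1$-valued maps.

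First, I would localize the problem. Since $n=2$, Corollary \ref{reghalfharm} tells us that $\mathrm{sing}(u)$ is locally finite in $\Omega$, so we may pick $r_0>0$ with $\overline{B_{r_0}(a)}\subset\Omega$ and $\mathrm{sing}(u)\cap B_{r_0}(a)=\{a\}$. Thus $u$ is smooth on $\overline{B_r(a)}\setminus\{a\}$ for every $r\in(0,r_0)$, and the classical degree $\deg(u|_{\partial B_r(a)})$ is well defined. It is constant in $r\in(0,r_0)$ (by homotopy invariance through annular regions free of singularities), and by definition this common value is $\deg(u,a)$.

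Next, I would run the blow-up analysis outlined in Section \ref{subsecthm3}. Consider the rescalings $u_r(x):=u(a+rx)$, which are minimizing $1/2$-harmonic maps with values in $\mathbb{S}^1$ on the rescaled domains. Using the scale invariance of $\mathcal{E}$ in dimension $n=2$ together with the monotonicity formula and the strong compactness of minimizing sequences, one extracts a subsequence $r_k\to 0^+$ along which $u_{r_k}$ converges strongly in $H^{1/2}_{\mathrm{loc}}(\mathbb{R}^2;\mathbb{S}^1)$ to a tangent map $u_\infty$. By the blow-up theory, $u_\infty$ is $0$-homogeneous and minimizing $1/2$-harmonic in every ball of $\mathbb{R}^2$. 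If $u_\infty$ were constant, the small-energy $\veps$-regularity underlying Corollary \ref{reghalfharm} would force $u$ to be continuous at $a$, contradicting $a\in\mathrm{sing}(u)$. Hence $u_\infty$ is non constant and Theorem \ref{mainthm2} yields $A\in O(2,\mathbb{R})$ with $u_\infty(x)=u_\star(Ax)$.

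Finally, I would transfer the degree information. A direct computation gives $\deg(u_\infty|_{\mathbb{S}^1})=\mathrm{sign}(\det A)\deg(u_\star|_{\mathbb{S}^1})=\pm 1$. On the other hand, by a standard trace/restriction argument combined with the Brezis--Nirenberg theory of degree for $H^{1/2}(\mathbb{S}^1;\mathbb{S}^1)$ maps (which is continuous with respect to the $H^{1/2}$ norm on $\mathbb{S}^1$), strong $H^{1/2}_{\mathrm{loc}}$ convergence of $u_{r_k}$ to $u_\infty$ implies, after selecting almost every radius $\rho>0$, convergence of the traces $u_{r_k}|_{\partial B_\rho}$ to $u_\infty|_{\partial B_\rho}$ in $H^{1/2}(\partial B_\rho;\mathbb{S}^1)$. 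Consequently $\deg(u_{r_k}|_{\partial B_\rho})=\deg(u_\infty|_{\partial B_\rho})=\pm 1$ for $k$ large, and since $\deg(u_{r_k}|_{\partial B_\rho})=\deg(u|_{\partial B_{r_k\rho}(a)})=\deg(u,a)$ once $r_k\rho<r_0$, we conclude $\deg(u,a)\in\{+1,-1\}$.

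The main technical obstacle I foresee is ensuring that the blow-up convergence is strong enough in $H^{1/2}$ on generic circles to propagate the degree: compactness up to a subsequence is standard from Section \ref{subsecthm3}, but one must verify that traces converge in the $H^{1/2}(\mathbb{S}^1;\mathbb{S}^1)$ norm on a set of radii with positive measure, which is where a Fubini-type argument on the Gagliardo seminorm, together with the absence of energy concentration away from $a$, comes in.
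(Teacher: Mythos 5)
Your proposal is correct and follows essentially the same path as the paper's proof in Section~\ref{subsecthm3}: blow up at the singular point, use compactness (via the harmonic extension and Theorem~\ref{compactfreebd}) to obtain a tangent map, show it is a non-trivial $0$-homogeneous minimizing $1/2$-harmonic map, classify it via Theorem~\ref{mainthm2}, and transfer the degree by choosing a good circle where traces converge in $H^{1/2}$ and invoking the Brezis--Nirenberg degree continuity. The only cosmetic difference is how non-constancy of the tangent map is argued: you appeal directly to $\varepsilon$-regularity at $a$, whereas the paper establishes $\boldsymbol{\Theta}:=\lim_{r\downarrow 0}\frac{1}{r}\mathbf{E}(u^\e,B_r^+)>0$ via the monotonicity formula and reads off non-constancy from $\mathbf{E}(u_0^\e,B_r^+)=\boldsymbol{\Theta}r>0$; these are equivalent in substance.
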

\vskip5pt

The results and proofs presented in this note represent fractional $H^{1/2}$-counterparts of classical results on minimizing harmonic maps into spheres. First, to prove Theorem~\ref{mainthm1}, we show that a $0$-homogeneous minimizing  $1/2$-harmonic maps from $\R^2$ into $\mathbb{S}^{m-1}$ must be constant if $m\geq 3$. This can be seen as the analogue  of R. Schoen \& K.~Uhlenbeck result \cite[Proposition 1.2]{SU2} about the constancy of $0$-homogeneous minimizing harmonic maps from $\R^3$ into $\mathbb{S}^3$. Their result relies on the fact that a harmonic $2$-sphere into $\mathbb{S}^3$ must be equatorial, a consequence of a theorem of F.J.~Almgren~\cite{Alm}  and E. Calabi \cite{Cal}. Constancy then follows through a second variation argument, destabilizing non constant maps in the orthogonal direction to the image. In our context, any $1/2$-harmonic circle (see Section \ref{sectdefcircles}) turns out to be the boundary of a minimal disc with free boundary in $\mathbb{S}^{m-1}$. Recently, A.M. Fraser \& R. Schoen~\cite{FS} proved that such a minimal disc must be a flat disc through the origin, extending a famous result of J.C.C. Nitsche \cite{Ni} for $m=3$ to arbitrary spheres. As a consequence, any $1/2$-harmonic circle is equatorial (see Corollary \ref{classifthm}), and we use this fact to destabilize non constant $0$-homogeneous $1/2$-harmonic maps from $\R^2$ into $\mathbb{S}^{m-1}$ using again variations in the orthogonal direction to their image (see Proposition \ref{trivtangmap}).  Let us mention that, surprisingly, the same strategy applies to prove smoothness of minimizing ``fractional $s$-harmonic maps" from the line into a sphere for $s\in (0,1/2)$, see \cite{MSY}.  

Concerning Theorem \ref{mainthm2} and Theorem \ref{mainthm3}, we have obtained the $H^{1/2}$-analogue of a classical result of H.~Brezis, J.M. Coron, and E.H. Lieb \cite{BCL} (see also \cite{AlmLi}). In the spirit of \cite{BCL}, the minimality of $x/|x|$ is obtained by means of sharp energy lower bounds, which in turn rely on the distributional Jacobian for $H^{1/2}$-maps into $\mathbb{S}^1$, see \cite{BBM,MilPi,Riv}. To prove the uniqueness part, we use the fact that all $0$-homogeneous $1/2$-harmonic maps in $\R^2$ can be written in terms of finite Blaschke products, which are rational functions of the complex variable. This fact  has been established in \cite{MS} (see also \cite{BMRS,Da1}). Using this representation, we prove rigidity among degree $\pm 1$ maps by domain deformations. Then we exclude maps with higher degree by suitable constructions of competitors in the spirit of \cite[Proof of Theorem 7.4]{BCL}. Compared to~\cite{BCL}, the construction turns out to be more involved as it requires additional steps and the numerical evaluation of certain integrals. Finally, Theorem \ref{mainthm3} is obtained through the aforementioned blow-up analysis near a singularity. More precisely, we prove that homothetic expansions of a minimizing $1/2$-harmonic map near a singular point converge up to subsequences to a non trivial $0$-homogeneous minimizing $1/2$-harmonic map, so that the conclusion follows from Theorem \ref{mainthm2}. Compared to \cite{BCL} again, we do not know if 
a minimizing $1/2$-harmonic map $u$ satisfies $u(x)\sim A(x-a)/|x-a|$ near a singular point $a\in\Omega$ for some $A\in O(2,\R)$, or equivalently if  uniqueness of the blow-up limits  holds. For classical minimizing harmonic maps (into analytic manifolds), uniqueness of blow-ups (i.e., of tangent maps) at isolated singularities has been proved in \cite{Sim,Simbk}. It rests on the so-called \L{}ojasiewicz-Simon inequality, which is not known in our context. 
\vskip3pt

In most of the proofs, we follow the approach of \cite{MS} using of the harmonic extension to the upper half space $\R^{n+1}_+$ given by the convolution with the Poisson Kernel. This allows us to realize the $1/2$-Laplacian as the associated  Dirichlet-to-Neumann map (see Section \ref{sectionprelim}), and then rephrase the $1/2$-harmonic map equation as a harmonic map system with (partially) free boundary condition, see Section \ref{section1}. In particular, we make  use of the existing regularity and compactness results of R.~Hardt  \& F.H.~Lin~\cite{HL}, F.~Duzaar \& K. Steffen \cite{DuS,DuS2}, and F. Duzaar \& J.F.~Grotowski~\cite{DG1,DG2}, see Section~\ref{secharmfreebd}.

\subsection*{Notation}

Throughout the paper, $\R^{n+1}_+$ is the open upper half space $\R^n\times (0,\infty)$, and $\R^n$ can be identified with $\partial  \mathbb{R}^{n+1}_+=\R^n\times\{0\}$. 
More generally, a set $A\subset\mathbb{R}^n$ can be identified with $A\times\{0\}\subset\partial  \mathbb{R}^{n+1}_+$. 
Points in $\mathbb{R}^{n+1}$ are written $\mathbf{x}=(x,x_{n+1})$ with $x\in\mathbb{R}^n$ and $x_{n+1}\in\mathbb{R}$.  
We shall denote by $B_r(\mathbf{x})$ the open ball in $\mathbb{R}^{n+1}$ of radius $r$ centered at $\mathbf{x}=(x,x_{n+1})$, while $D_r(x)$  is the open ball (or disc) in $\R^n$ centered at $x$ (and thus $D_r(x)\times\{0\}= B_r\big((x,0)\big)\cap(\mathbb{R}^{n}\times\{0\})$). If the center is at the origin, we simply write $B_r$ and $D_r$ the corresponding balls. In case $n=2$, we write $\mathbb{D}:=D_1$. 
\vskip3pt

\noindent$\bullet$ For an arbitrary set $G\subset  \mathbb{R}^{n+1}$, we define 
$$G^+:=G\cap \mathbb{R}^{n+1}_+\quad\text{ and }\quad\partial^+ G:=(\partial G)^+=\partial G\cap \mathbb{R}^{n+1}_+\,.$$

\noindent$\bullet$ If $G\subset\R^{n+1}_+$ is a bounded open set, we shall say that $G$ is  {\bf admissible} whenever 
\begin{itemize}
\item[(i)] $\partial G$ is Lipschitz regular;  
\vskip2pt
\item[(ii)] the (relative) open set $\partial^0G\subset\R^n\times\{0\}$ defined by 
$$\partial^0G:=\Big\{\mathbf{x}\in\partial G\cap\partial\R^{n+1}_+ : B^+_{r}(\mathbf{x})\subset G \text{ for some $r>0$}\Big \}$$
is non empty and has a Lipschitz boundary in $\R^n$; 
\vskip2pt

\item[(iii)] $\partial G=\partial^+ G\cup\overline{\partial^0G}\,$.
\end{itemize}
According to this definition, an half ball $B_r^+$ is admissible, and $\partial^0B_r^+=D_r\times\{0\}$.    
\vskip3pt

\noindent$\bullet$ The tangent space to a manifold $\mathcal{N}$ at a point $p\in\mathcal{N}$ is denoted by ${\rm Tan}(p,\mathcal{N})$ (while the tangent bundle of $\mathcal{N}$ is simply denoted by $T\mathcal{N}$). 
\vskip3pt

\noindent$\bullet$ We often identify $\R^2$ with the complex plane $\C$, and if $x=(x_1,x_2)\in \R^2$, the complex variable is written $z:=x_1+i x_2$. Functions taking values into $\R^2$ are also understood as complex valued functions. The product of two such functions are thus understood in the sense of complex multiplication. 
\vskip3pt

Finally, we always denote by $C$ a generic positive constant which may only depend on the dimension $n$, and possibly changing from line to line. If a constant depends on additional given parameters, we shall write those parameters using the subscript notation.

\section{Harmonic extension \& the $1/2$-Laplacian} \label{sectionprelim}     

\subsection{Harmonic extension} 
For a measurable function $u:\R^n\to \R^m$, we  denote by $u^\e$ its extension to the upper half-space $\R^{n+1}_+$ given by the convolution of $u$ with the Poisson kernel, i.e., 
$$u^{\e}({\bf x}):=\gamma_n\int_{\R^n}\frac{x_{n+1}u(y)}{(|x-y|^2+x^2_{n+1})^{\frac{n+1}{2}}}\,\de y\quad\text{for ${\bf x}=(x,x_{n+1})\in\R^{n+1}_+$}\,. $$
This extension is well defined whenever $u$ belongs to the Lebesgue $L^p$ over $\R^n$ with respect to the finite measure $(1+|x|^2)^{-\frac{n+1}{2}}\,\de x$ for some $1\leq p\leq\infty$. 
In this case, it is well known that $u^{\e}$ provides an harmonic extension of $u$ to $\R^{n+1}_+$. In other words, $u^\e$ solves  
$$ 
\begin{cases}
\Delta u^\e =0 & \text{in $\R^{n+1}_+$}\,,\\
u^\e=u & \text{on $\partial\R^{n+1}_+=\R^n\times\{0\}$}\,.
\end{cases}
$$
Moreover, $u^\e\in L^\infty(\R^{n+1}_+)$ whenever $u\in L^\infty(\R^n)$, and 
\begin{equation}\label{Linftyextineq}
\|u^\e\|_{L^\infty(\R^{n+1}_+)}\leq \|u\|_{L^\infty(\R^n)}\,.
\end{equation}
\vskip3pt

We shall make use of the following  lemma about the harmonic extension. Using the Fourier transform\footnote{Recall that the Fourier transform of the Poisson kernel is given by ${\rm exp}(-2\pi x_{n+1}|\xi|)$.}, its proof is elementary and it is left to the reader.  

\begin{lemma}\label{estiprelim}
If $u\in L^2(\R^n)\cap L^1(\R^n)$, then 
$$\int_{\R^n}|u^\e(x,x_{n+1})|^2\,\de x\leq  \|u\|^2_{L^2(\R^n)}\quad \forall x_{n+1}>0\,,$$
and
$$\int_{\R^n}|u^\e(x,x_{n+1})|^2\,\de x\leq  \frac{C \|u\|^2_{L^1(\R^n)}}{x^n_{n+1}}\quad\forall x_{n+1}> 0\,,$$
for a constant $C$ depending only on $n$. 
\end{lemma}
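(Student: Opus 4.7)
The plan is to prove both inequalities by writing $u^\e(\cdot,x_{n+1}) = P_{x_{n+1}} \ast u$ where $P_t(x) := \gamma_n t/(|x|^2+t^2)^{(n+1)/2}$ is the Poisson kernel at height $t$, and then exploiting the two different Young-type bounds on this convolution.

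For the first inequality, I would pass to the Fourier side. The Fourier transform of $P_t$ is $e^{-2\pi t|\xi|}$ (as recalled in the footnote), so Plancherel yields
\[
\int_{\R^n} |u^\e(x,x_{n+1})|^2\,\de x = \int_{\R^n} e^{-4\pi x_{n+1}|\xi|}|\widehat{u}(\xi)|^2\,\de\xi \leq \int_{\R^n}|\widehat{u}(\xi)|^2\,\de\xi = \|u\|_{L^2(\R^n)}^2,
\]
using that $0 < e^{-4\pi x_{n+1}|\xi|}\leq 1$ for $x_{n+1}>0$. (Equivalently, one could invoke Young's convolution inequality with the fact that $\|P_t\|_{L^1(\R^n)}=1$.)

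For the second inequality, I would instead apply Young's convolution inequality in the form $\|P_{x_{n+1}} \ast u\|_{L^2} \leq \|P_{x_{n+1}}\|_{L^2}\|u\|_{L^1}$, and then compute $\|P_{x_{n+1}}\|_{L^2}$ directly. Using the scaling change of variables $y = x_{n+1}z$,
\[
\|P_{x_{n+1}}\|_{L^2(\R^n)}^2 = \gamma_n^2 \int_{\R^n} \frac{x_{n+1}^2}{(|y|^2+x_{n+1}^2)^{n+1}}\,\de y = \frac{\gamma_n^2}{x_{n+1}^n}\int_{\R^n}\frac{\de z}{(1+|z|^2)^{n+1}} = \frac{C}{x_{n+1}^n},
\]
for some dimensional constant $C$. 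Squaring Young's inequality then gives the claimed bound.

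There is no real obstacle here; the content is a standard application of the scaling/convolution structure of the Poisson kernel, which is exactly why the authors leave it to the reader. The only thing to watch is consistency of the normalization constant $\gamma_n$ appearing in the definition of $u^\e$ with the one that makes $\widehat{P_t}(\xi) = e^{-2\pi t|\xi|}$, but since the statement only asks for a constant depending on $n$, any correct normalization suffices.
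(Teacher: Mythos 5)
Your proof is correct. The paper leaves this lemma to the reader, explicitly suggesting the Fourier transform (via the footnote giving $\widehat{P_t}(\xi)=e^{-2\pi t|\xi|}$), and your argument follows that hint for the first inequality; for the second you use Young's inequality with a scaling computation of $\|P_t\|_{L^2}$, which is an equally elementary variant (one could also stay on the Fourier side and bound $\int e^{-4\pi t|\xi|}|\widehat u(\xi)|^2\,\de\xi \leq \|\widehat u\|_{L^\infty}^2\int e^{-4\pi t|\xi|}\,\de\xi \leq C\|u\|_{L^1}^2/t^n$), so this is essentially the intended approach.
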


We complete this subsection recalling the classical identity relating the $H^{1/2}$-seminorm over $\R^n$ with the Dirichlet energy of the harmonic extension:
\begin{multline}\label{iden1/2H1}
\frac{\gamma_n}{4}\iint_{\R^n\times\R^n}\frac{|u(x)-u(y)|^2}{|x-y|^{n+1}}\,\de x\de y= \frac{1}{2}\int_{\R^{n+1}_+}|\nabla u^\e|^2\,\de {\bf x}\\
=\min\left\{ \frac{1}{2}\int_{\R^{n+1}_+}|\nabla v|^2\,\de {\bf x}: v\in \dot H^1(\R^{n+1}_+;\R^d)\,,\; v=u\text{ on }\partial\R^{n+1}_+\right\}
\end{multline} 
for every $u$ in the homogeneous Sobolev space $\dot H^{1/2}(\R^n;\R^m)$.

\subsection{The  $1/2$-Laplacian and the Dirichlet-to-Neumann map} 
Given a smooth bounded open set $\Omega\subset\R^n$, the $1/2$-Laplacian $(-\Delta)^{\frac{1}{2}}:\widehat H^{1/2}(\Omega;\R^m)\to\big(\widehat H^{1/2}(\Omega;\R^m)\big)^\prime $ is defined as the continuous linear operator induced by the quadratic form $\mathcal{E}(\cdot,\Omega)$. For $u\in \widehat H^{1/2}(\Omega;\R^m)$, the action of $(-\Delta)^{\frac{1}{2}}u$ on an  element $\varphi\in \widehat H^{1/2}(\Omega;\R^m)$ is denoted by $\big\langle (-\Delta)^{\frac{1}{2}}u , \varphi\big\rangle_\Omega$, and it is given by 
\begin{equation}\label{weaksqrtlap}
\big\langle (-\Delta)^{\frac{1}{2}}u , \varphi\big\rangle_\Omega=\frac{\gamma_n}{2}\iint_{(\R^n\times\R^n)\setminus(\Omega^c\times\Omega^c)}\frac{(u(x)-u(y))\cdot(\varphi(x)-\varphi(y))}{|x-y|^{n+1}}\,\de x\de y\,.
\end{equation}
Note that, when restricted to $H^{1/2}_{00}(\Omega;\R^m)$, the distribution $(-\Delta)^{\frac{1}{2}}u$ actually belongs to $H^{-1/2}(\Omega;\R^m)$. 

It is well known that the fractional Laplacian $(-\Delta)^{\frac{1}{2}}$ coincides with the {\sl Dirichlet-to-Neumann operator} associated with the harmonic extension to $\R^{n+1}_+$. To be more specific, if $u\in \widehat H^{1/2}(\Omega;\R^m)$, then   $u^\e$ is well defined, and $u^\e\in H^1(G;\R^m)$  for every admissible bounded  open set $G\subset\R^{n+1}_+$ satisfying $\overline{\partial^0G}\subset \Omega\times\{0\}$. Hence, $u^\e$ admits a distributional exterior normal derivative $\partial_\nu u^\e$ on $\Omega\times\{0\}$. By harmonicity of $u^\e$, its action on $\varphi\in\mathscr{D}(\Omega;\R^m)$ can be defined as 
\begin{equation}\label{neumdef}
\big\langle\partial_\nu u^\e,\varphi \big\rangle_\Omega:=\int_{\R^{n+1}_+} \nabla u^\e\cdot\nabla \Phi\,\de{\bf x} \,, 
\end{equation}
where $\Phi$ is any smooth extension of $\varphi$ compactly supported in $\R^{n+1}_+\cup(\Omega\times\{0\})$.  By approximation, the same identity holds for any $\Phi\in H^1(\R^{n+1}_+;\R^m)$ compactly supported in $\R^{n+1}_+\cup(\Omega\times\{0\})$.  In this way, 
the distribution $\partial_\nu u^\e$ appears to  belong to $H^{-1/2}_{00}(\Omega;\R^m)$, and the following identity holds (see \cite[Lemma 2.9]{MS}) 
\begin{equation}\label{idenerumsqrtlap}
\big\langle\partial_\nu u^\e,\varphi \big\rangle_\Omega= \big\langle (-\Delta)^{\frac{1}{2}}u , \varphi\big\rangle_\Omega\qquad \forall \varphi\in H^{1/2}_{00}(\Omega;\R^m)\,.
\end{equation}
All   details can be found in \cite[Section 2]{MS}.

\section{$1/2$-harmonic maps vs harmonic maps with free boundary} \label{section1}     

\subsection{Minimizing harmonic maps with free boundary}\label{secharmfreebd} 

For an  admissible bounded open set $G\subset \R^{n+1}_+$, we consider the Dirichlet energy ${\bf E}(\cdot,G)$ defined on $H^1(G;\R^m)$ by  
\begin{equation}\label{defengDirE}
{\bf E}(v,G):=\frac{1}{2}\int_{G}|\nabla v|^2\,\de {\bf x}\,. 
\end{equation}
We also consider a given smooth submanifold $\mathcal{N}\subset \R^m$ that we assume to be compact and without boundary. 

\begin{definition}
Let $G\subset \R^{n+1}_+$ be an admissible bounded  open set, and consider a map $v\in H^1(G;\R^m)$ satisfying $v({\bf x})\in \mathcal{N}$ $\mathcal{H}^n$-a.e. on $\partial^0G$. We say 
that $v$ is a minimizing harmonic map in $G$ with respect to the partially free boundary condition $v(\partial^0G)\subset \mathcal{N}$ if 
$${\bf E}(v,G)\leq {\bf E}(w,G) $$
for every competitor $w\in H^1(G;\R^m)$ satisfying $w({\bf x})\in \mathcal{N}$ for $\mathcal{H}^n$-a.e. ${\bf x}\in\partial^0G$, and such that ${\rm spt}(w-v)\subset G\cup\partial^0G$. {\sl In short, we may say that $v$ is a minimizing harmonic map with free boundary in $G$.}
\end{definition}

Using variations supported in the open set $G$, one obtains that a minimizing harmonic map $v$ with free boundary  is harmonic in $G$, i.e., 
$$\Delta v=0\quad\text{in $G$}\,. $$
In particular, $v\in C^\infty(G)$ by standard elliptic theory. Hence the regularity issue is at the (partially) free boundary $\partial^0 G$. As in \cite{DuS,HL}, one obtains from minimality the boundary condition 
$$\frac{\partial v}{\partial \nu}\perp{\rm Tan}(v,\mathcal{N})  \quad\text{on $\partial^0 G$}\,,$$
which has to be understood in the weak sense, that is
$$\int_G\nabla v\cdot\nabla \zeta\,\de {\bf x}=0 $$
for every $\zeta\in H^1(G;\R^m)$ satisfying $\zeta({\bf x})\in {\rm Tan}(v({\bf x}),\mathcal{N})$ for $\mathcal{H}^n$-a.e. ${\bf x}\in\partial^0G$ and such that ${\rm spt}(\zeta)\subset G\cup\partial^0G$. 
\vskip5pt

Assuming that $v\in L^\infty(G)$, one may apply the (partial) regularity results of \cite{DuS,HL} to derive the following theorem (see \cite[Section 4]{MS} or \cite{Mos}). In its statement, ${\rm sing}(v)$ denotes the so-called {\sl singular set} of $v$ (in $\partial^0G$), i.e., 
$${\rm sing}(v):=\partial^0G\setminus\big\{{\bf x}\in\partial^0G: \text{$v$ is continuous in a neighborhood of ${\bf x}$}\big\}\,,$$
 which turns to be a relatively closed subset of $\partial^0G$. 
  
\begin{theorem}\label{regfreebd}
Let $v\in H^1(G;\R^m)\cap L^\infty(G)$ satisfying $v(\partial^0G)\subset\mathcal{N}$ be a minimizing harmonic map with free boundary in $G$. Then $v\in C^\infty\big((G\cup\partial^0G)\setminus{\rm sing}(v)\big)$,  ${\rm sing}(v)$ is locally finite in $\partial^0G$ for $n=2$, and ${\rm dim}_{\mathcal{H}}\,{\rm sing}(v)\leq n-2$ for $n\geq 3$. 
\end{theorem}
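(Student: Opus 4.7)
The plan is to reduce the statement to the partial regularity theory for minimizing harmonic maps with partially free boundary constraint, as developed by Hardt--Lin \cite{HL} and Duzaar--Steffen \cite{DuS,DuS2}. The argument splits naturally into interior and free-boundary regularity. Since $v$ satisfies no pointwise constraint on the open set $G$ itself, testing minimality against arbitrary perturbations $\varphi \in \mathscr{D}(G;\R^m)$ gives $\Delta v = 0$ in $G$, and standard elliptic regularity yields $v \in C^\infty(G)$; hence $\mathrm{sing}(v) \subset \partial^0 G$, and the whole issue is the free-boundary regularity. By admissibility of $G$, this is a local question near points of $\partial^0 G$, so after flattening we may assume $\mathbf{x}_0\in \partial^0G$ has a neighborhood of the form $B_r^+(\mathbf{x}_0)$ with $\partial^0B_r^+(\mathbf{x}_0)\subset \partial^0G$.

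Following \cite{HL,DuS}, the proof at the free boundary rests on two key ingredients:
\begin{itemize}
\item[(a)] A \emph{boundary monotonicity formula}: for $\mathbf{x}_0 \in \partial^0 G$ and $r$ small, the scaled Dirichlet energy $r \mapsto r^{1-n}\int_{B_r^+(\mathbf{x}_0)} |\nabla v|^2 \, d\mathbf{x}$ is monotone non-decreasing. This is obtained by the standard inner-variation / domain-deformation argument, using variations tangent to $\partial^0G$ (which preserve the free-boundary constraint $v(\partial^0G)\subset \mathcal{N}$) together with compactly supported radial deformations on the $\mathbb{R}^{n+1}_+$ side.
\item[(b)] A \emph{boundary $\varepsilon$-regularity theorem}: there exists $\varepsilon_0 = \varepsilon_0(\mathcal{N},\|v\|_{L^\infty}) > 0$ such that if the scaled energy on $B_r^+(\mathbf{x}_0)$ with $\mathbf{x}_0\in\partial^0G$ is below $\varepsilon_0$, then $v$ is H\"older continuous (hence $C^\infty$ by bootstrapping the harmonic system with the oblique Neumann-type condition $\partial_\nu v\perp T_v\mathcal{N}$) in the smaller half-ball $B_{r/2}^+(\mathbf{x}_0)$.
\end{itemize}

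Granted (a) and (b), one applies the standard Federer--Almgren dimension reduction: at any putative singular point $\mathbf{x}_0\in \partial^0 G$, the rescaled maps $v_\rho(\mathbf{x}) := v(\mathbf{x}_0 + \rho \mathbf{x})$ subconverge (via the energy bound from (a) and the compactness results of \cite{HL,DuS2}) to a $0$-homogeneous minimizing harmonic map with free boundary on the half-space $\R^{n+1}_+$, and the singular set is stratified by the dimension of the translation-invariance subspace of such a tangent map; combined with (b) applied at smooth points of the tangent map, this gives $\dim_{\mathcal H}\,\mathrm{sing}(v)\leq n-2$ for $n\geq 3$. For $n=2$ the same reduction forces $\dim_{\mathcal H}\,\mathrm{sing}(v)\leq 0$, and local finiteness follows from an energy-quantization counting argument: by (a) and (b), each singularity contributes at least $\varepsilon_0$ to the total scaled energy, which is locally finite by the $H^1$ bound on $v$.

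The main technical hurdle is the boundary $\varepsilon$-regularity (b): it requires a harmonic replacement comparison that simultaneously controls the interior harmonicity and the free-boundary constraint $v(\partial^0G)\subset \mathcal{N}$, together with an analysis of the approximation of $\mathcal{N}$-valued boundary traces by smooth maps with small defect. This is precisely the content of \cite{DuS,DuS2} in the partially free boundary setting, so the role of Theorem \ref{regfreebd} is to assemble the pieces rather than to prove new estimates; full details are given in \cite[Section 4]{MS} and in \cite{Mos}.
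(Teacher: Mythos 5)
Your outline is correct and matches the paper's approach: the paper gives no independent proof but simply invokes the partial-regularity machinery of Hardt--Lin and Duzaar--Steffen (interior harmonicity, boundary monotonicity, boundary $\varepsilon$-regularity, Federer dimension reduction), citing \cite{HL,DuS,DuS2} together with \cite[Section 4]{MS} and \cite{Mos} for the assembly. Your sketch of those ingredients and the role of the $L^\infty$ bound is accurate, so there is nothing to add.
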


By means of Federer's dimension reduction principle, the size of the singular set can be further reduced according to the existence or non existence of non trivial {\sl tangent maps}. Those maps are defined as all possible blow-up limits of minimizing harmonic maps with free boundary at a point of the free boundary $\partial^0G$, see \cite[Section 3.5]{HL}. In our setting, they  appear to be  $0$-homogeneous maps $v_0\in H^1_{\rm loc}(\overline{\R^{n+1}_+};\R^m)\cap L^\infty(\R^{n+1}_+)$ satisfying  $v_0(\partial \R^{n+1}_+)\subset\mathcal{N}$ which are minimizing harmonic maps with free boundary in $B_R^+$ for every $R>0$. Applying \cite[Theorem~3.6]{HL} (see also \cite[Remark~4.3]{DuS2}), we readily obtain the following result. 

\begin{theorem}\label{dimred}
Let $\ell=\ell(\mathcal{N})$ be the largest integer such that any bounded and $0$-homogeneous minimizing harmonic map with free boundary $v_0:\R^{j+1}_+\to \R^m$ with $v_0(\partial\R^{j+1}_+)\subset\mathcal{N}$ is a constant for each $j=1,\ldots,\ell$. For any minimizing harmonic map $v$ with free boundary as in Theorem~\ref{regfreebd}, we have ${\rm sing}(v)=\emptyset$ if $n\leq \ell$, ${\rm sing}(v)$ is locally finite in~$\partial^0G$ if $n=\ell+1$, and ${\rm dim}_{\mathcal{H}}\,{\rm sing}(v)\leq n-\ell-1$ if $n\geq \ell+2$. 
\end{theorem}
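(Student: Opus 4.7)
The plan is to invoke Federer's dimension reduction principle exactly as in \cite[Theorem 3.6]{HL}, once we have set up the appropriate blow-up analysis at free boundary points. Since a minimizing harmonic map with free boundary is harmonic (and smooth) in the interior $G$, the singular set $\mathrm{sing}(v)$ is automatically contained in $\partial^0G$, so the whole argument takes place at boundary points. Fix $\mathbf{x}_0\in\mathrm{sing}(v)\cap\partial^0G$ and consider the rescalings $v_{r}(\mathbf{x}):=v(\mathbf{x}_0+r\mathbf{x})$ for $r\to 0^+$. Using $\partial^0G$ being locally flat (Lipschitz regular) near $\mathbf{x}_0$ and the monotonicity formula for minimizing harmonic maps with free boundary (a boundary version of the usual Dirichlet monotonicity, derived via the stationarity condition of $v$), one obtains uniform $H^1(B_R^+)$-bounds on $v_r$. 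Combined with the $L^\infty$-bound, the compactness results for minimizing harmonic maps with free boundary of Hardt--Lin and Duzaar--Steffen yield, along a subsequence $r_k\to 0$, a strong $H^1_{\mathrm{loc}}$ limit $v_0:\overline{\R^{n+1}_+}\to\R^m$ that is itself a minimizing harmonic map with free boundary on every $B_R^+$, bounded, and $0$-homogeneous thanks to the equality case of the monotonicity formula. Moreover, the $\varepsilon$-regularity theorem at the free boundary from \cite{DuS,HL} forces $v_0$ to be non-constant, for otherwise $v$ would be continuous near $\mathbf{x}_0$.

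The three conclusions then follow from the standard Federer dimension reduction scheme. If $n\leq\ell$, then $v_0$ is a $0$-homogeneous bounded minimizing harmonic map with free boundary on $\R^{n+1}_+$, hence constant by definition of $\ell$, contradicting the previous paragraph; thus $\mathrm{sing}(v)=\emptyset$. For the general case $n\geq \ell+2$, one argues by contradiction: assuming $\mathcal{H}^s(\mathrm{sing}(v))>0$ for some $s>n-\ell-1$, Federer's covering/rectifiability lemma produces a sequence of rescalings whose centers lie in the singular set and whose limit $v_0$ is a tangent map whose singular set also has positive $\mathcal{H}^s$-measure. A further iteration (blowing up $v_0$ at a non-zero singular point on its spine) reduces the effective dimension, and since the spine of a tangent map is a linear subspace of $\partial \R^{n+1}_+$, after at most $n-\ell$ such steps one obtains a bounded $0$-homogeneous minimizing harmonic map with free boundary on $\R^{\ell+2-k}_+$ (for some $k\leq n-\ell-1$) having a non-removable singularity at the origin, contradicting the definition of $\ell$. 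The borderline case $n=\ell+1$ is similar: a non-isolated singularity in $\partial^0G$ would yield a tangent map with non-trivial spine, hence descending to a non-constant homogeneous minimizer on $\R^{\ell+1}_+$, again contradicting the definition of $\ell$; therefore $\mathrm{sing}(v)$ is locally finite.

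The main obstacle, as usual in free boundary settings, is ensuring that all ingredients of Federer's abstract scheme actually hold here: the boundary monotonicity formula, strong $H^1_{\mathrm{loc}}$-compactness of blow-up sequences with preservation of minimality in the limit, $\varepsilon$-regularity at $\partial^0G$, and the cone/spine structure of $0$-homogeneous tangent maps satisfying the partially free boundary condition. Each of these, however, is already established in \cite{HL,DuS,DuS2} for the partial free boundary problem $v(\partial^0G)\subset\mathcal{N}$, so the argument indeed reduces to a direct application of \cite[Theorem 3.6]{HL} together with \cite[Remark 4.3]{DuS2}.
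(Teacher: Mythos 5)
Your proposal takes essentially the same route as the paper, which simply cites \cite[Theorem~3.6]{HL} together with \cite[Remark~4.3]{DuS2} after noting (Remark~\ref{rembd0hom}) that the relevant $0$-homogeneous tangent maps are uniformly bounded. Your write-up fleshes out the standard Federer dimension-reduction machinery (boundary monotonicity, strong $H^1_{\rm loc}$ compactness with minimality passing to the limit, $\varepsilon$-regularity at $\partial^0 G$, spine structure of homogeneous tangent maps), all of which is indeed supplied by \cite{HL,DuS,DuS2}; the only blemish is a small bookkeeping slip in the iterated blow-up — the descent should terminate with a non-constant bounded $0$-homogeneous minimizer on $\R^{j+1}_+$ for some $j\leq\ell$, not on $\R^{\ell+2-k}_+$ — but this does not affect the correctness of the argument.
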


\begin{remark}\label{rembd0hom}
Note that, in applying  \cite{HL}, we use the fact that 
any bounded and $0$-homogeneous minimizing harmonic map with free boundary $v_0$ satisfies the uniform bound 
$$\|v_0\|_{L^\infty(\R^{j+1}_+)}=\|{v_0}_{|\R^j\times\{0\}}\|_{L^\infty(\R^j)}\leq C_{\mathcal{N}}\,,$$
where $C_{\mathcal{N}}$ is (essentially) the width of $\mathcal{N}$ (assuming that $0\in\mathcal{N}$). This estimate follows from the fact $v_0$ is precisely given by the harmonic extension to $\R^{j+1}_+$ of its restriction to $\R^j\times\{0\}$. In other words, if we set 
$u_0:={v_0}_{|\R^j\times\{0\}}$, then $v_0=(u_0)^\e$ (the convolution product of $u_0$ with the $j$-dimensional Poisson kernel).  Indeed, the difference $v_0-(u_0)^\e$ is a bounded harmonic function in $\R^{j+1}_+$. Since it vanishes on $\partial\R^{j+1}_+$,  it has to vanish identically  by the classical Liouville theorem. 
 \end{remark}

We conclude this subsection with an important compactness result for minimizing harmonic maps with free boundary (on which  Theorem \ref{regfreebd} and Theorem \ref{dimred} are based). It corresponds to a weaker version of a more general compactness theorem obtained in \cite[Theorem 2.2]{DG1} (see also \cite[Theorem 2.2]{DG2}). 

\begin{theorem}[\bf compactness]\label{compactfreebd}
Let $(v_k)\subset H^1(G;\R^m)$ be a bounded sequence of minimizing harmonic maps in $G$ with respect to the partially free boundary condition $v_k(\partial^0G)\subset \mathcal{N}$. There exist a (not relabeled) subsequence and $v\in H^1(G;\R^m)$ a minimizing harmonic map  with free boundary in $G$ such that $v_k\to v$ strongly in $H^1_{\rm loc}(G\cup\partial^0G)$. 
\end{theorem}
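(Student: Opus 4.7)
The plan is to adapt the classical Luckhaus-type argument for compactness of minimizing harmonic maps to the free boundary setting. Since $(v_k)$ is bounded in $H^1(G;\R^m)$, I would first extract (up to a subsequence) a weak limit $v \in H^1(G;\R^m)$ with $v_k \rightharpoonup v$ in $H^1(G)$, $v_k \to v$ in $L^2(G)$ and $\mathcal{H}^n$-a.e.\ on $\partial^0 G$ (via Rellich and the trace theorem applied on admissible subdomains). Since $\mathcal{N}$ is closed and $v_k(\partial^0 G)\subset\mathcal{N}$ for $\mathcal{H}^n$-a.e.\ point, the pointwise a.e.\ convergence on $\partial^0 G$ yields $v(\partial^0 G)\subset\mathcal{N}$ a.e., so $v$ is an admissible candidate. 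Weak lower semicontinuity of $\bE(\cdot,G')$ for any admissible $G'$ with $G'\cup\partial^0 G'\subset G\cup\partial^0 G$ gives $\bE(v,G')\leq\liminf_k \bE(v_k,G')$.

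To show that $v$ is itself minimizing, I would fix an arbitrary competitor $w \in H^1(G;\R^m)$ with $w(\partial^0 G)\subset\mathcal{N}$ a.e.\ and $\rmspt(w-v)\subset G\cup\partial^0 G$, and construct a sequence of competitors $w_k$ for $v_k$ satisfying $w_k=v_k$ near $\partial^+ G$, $w_k(\partial^0 G)\subset\mathcal{N}$ a.e., and $\limsup_k \bE(w_k,G)\leq \bE(w,G)$. The standard device is a boundary-adapted Luckhaus lemma: in a thin half-annular region $A_k^+$ separating $\rmspt(w-v)$ from $\partial^+ G$, one interpolates affinely between $v_k$ and $w$ on $\partial^0 A_k^+$ (off the manifold in general) and extends harmonically into $A_k^+$; then one post-composes with the nearest point projection $\pi_{\mathcal N}$ to restore the constraint on the free-boundary face. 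The crucial estimate is that, since $v_k\to v$ in $L^2$, one can choose the width of $A_k^+$ so that the energy of this interpolant on $A_k^+$ tends to $0$, while the projection $\pi_{\mathcal N}$ is well defined because the values stay in a fixed tubular neighborhood of $\mathcal N$ (using $L^\infty$ bounds on $v_k$, which follow from truncation after post-composition). Minimality of $v_k$ then yields $\bE(v_k,G)\leq \bE(w_k,G)$, and passing to $\liminf$ combined with the lsc bound gives $\bE(v,G)\leq \bE(w,G)$.

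With minimality in hand, I would upgrade to strong $H^1_{\rm loc}$ convergence by the classical energy equality trick: taking $w=v$ in the competitor construction above produces modifications $\widetilde v_k$ with $\widetilde v_k=v_k$ near $\partial^+ G'$, $\widetilde v_k=v$ on a slightly smaller set $G''$, $\widetilde v_k(\partial^0 G)\subset\mathcal N$, and $\bE(\widetilde v_k,G'\setminus G'') \to 0$. Minimality of $v_k$ on the subdomain gives $\bE(v_k,G'')\leq \bE(\widetilde v_k,G'')+o(1)=\bE(v,G'')+o(1)$, hence $\limsup_k \bE(v_k,G'')\leq \bE(v,G'')$. Combined with lower semicontinuity this gives $\bE(v_k,G'')\to \bE(v,G'')$, i.e.\ $\|\nabla v_k\|_{L^2(G'')}\to \|\nabla v\|_{L^2(G'')}$. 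Together with weak convergence in $H^1$, this yields strong convergence $\nabla v_k\to\nabla v$ in $L^2(G'')$, which, combined with $L^2$ convergence of $v_k$, is strong convergence in $H^1(G'')$. An exhaustion of $G\cup\partial^0 G$ by such admissible subdomains yields the claimed $H^1_{\rm loc}$ convergence.

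The main obstacle is the boundary Luckhaus construction: one needs an interpolation in a thin half-annular neighborhood of $\partial^+ G'$ that both has vanishing energy and satisfies the manifold constraint on the free-boundary face. The bulk interior case is classical, but the half-space geometry forces a careful choice of slicing surfaces (half-spheres $\partial^+ B_r^+$, or flat layers $\{x_{n+1}=\varepsilon\}$) so that traces on $\partial^0 G$ are controlled simultaneously with those on the spherical part; the uniform $L^\infty$ bound from Remark~\ref{rembd0hom}-style arguments (or truncation plus $\pi_{\mathcal N}$) is what keeps the projection step harmless.
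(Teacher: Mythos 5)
The paper does not give its own proof of Theorem~\ref{compactfreebd}: it explicitly presents it as a weakened version of \cite[Theorem~2.2]{DG1} (see also \cite[Theorem~2.2]{DG2}) and simply cites those references. Your sketch is essentially the route taken in that literature --- extraction of a weak limit with $\mathcal N$-valued trace on $\partial^0G$, a boundary-adapted Luckhaus interpolation to transfer a competitor for $v$ into competitors for $v_k$, lower semicontinuity plus those transferred competitors to get minimality of $v$, and then the energy-equality trick (taking $w=v$) to upgrade weak to strong $H^1_{\rm loc}$ convergence. So the strategy matches what the paper is relying on.

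Two technical points you should not gloss over if you intend this as a proof rather than a sketch. First, the claim that an $L^\infty$ bound on $v_k$ ``follows from truncation after post-composition'' is not automatic from the stated hypotheses: $H^1(G)$-boundedness plus $v_k(\partial^0G)\subset\mathcal N$ does not control $v_k$ on $\partial^+G$, and harmonicity in $G$ only propagates an $L^\infty$ bound inward once you already have one on the whole boundary. In the paper's application the extension $v_k=u_k^\e$ is bounded by \eqref{Linftyextineq}, but for the general statement you either need to add a uniform $L^\infty$ assumption (as is effectively done in \cite{DG1,DG2}) or arrange the boundary interpolation so that the nearest-point projection $\pi_{\mathcal N}$ is applied only to the trace on $\partial^0G$, where the $\mathcal N$-constraint is available without any interior $L^\infty$ bound. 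Second, the slicing in the thin half-annular shell must be chosen so as to control, simultaneously, the $L^2$-closeness of $v_k$ and $v$ on the curved part of the slice and on its trace in $\partial^0G$; this double control is precisely the nontrivial content of the free-boundary Luckhaus lemma of \cite{DuS,DG1}, and it should be invoked explicitly rather than treated as an obvious variant of the interior case, since an affine interpolant between two $\mathcal N$-valued traces lands near $\mathcal N$ only where the two traces are pointwise close, not merely $L^2$-close.
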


\subsection{Harmonic extension of minimizing $1/2$-harmonic maps}\label{seccorrespond}

In this subsection, our aim is to prove that minimizing $1/2$-harmonic maps and minimizing harmonic maps with free boundary can be made in one-to-one correspondance by means of the harmonic extension.  
It has been proven in \cite[Proposition 4.9]{MS} that the harmonic extension of a minimizing $1/2$-harmonic map returns a minimizing harmonic map with free boundary  in the upper half space. We shall improve this result showing that a converse statement  holds true. Here again, $\mathcal{N}\subset \R^m$ denotes a given smooth and compact submanifold without boundary. 

\begin{theorem}[\bf minimality transfer]\label{equivmin}
Let $\Omega\subset\R^n$ be a bounded smooth open set. A map $u\in \widehat H^{1/2}(\Omega;\mathcal{N})$ is a minimizing $1/2$-harmonic map in $\Omega$ if and only if its harmonic extension $u^\e$  
is a minimizing harmonic map with free boundary in every admissible bounded  open set $G\subset\R^{n+1}_+$ such that $\overline{\partial^0G}\subset \Omega\times\{0\}$. 
\end{theorem}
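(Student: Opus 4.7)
The forward implication ``$\Rightarrow$'' is already contained in \cite[Proposition~4.9]{MS}, so the plan is to prove the converse. Fix $\tilde u\in\widehat H^{1/2}(\Omega;\mathcal{N})$ with $K:=\rmspt(\tilde u-u)$ compact in $\Omega$, and set $\psi:=\tilde u^\e - u^\e = (\tilde u-u)^\e$. Since $\tilde u-u$ is compactly supported in $\Omega$ with $\mathcal{E}(\tilde u-u,\Omega)<\infty$, we have $\psi\in\dot H^1(\R^{n+1}_+)$ with $\int_{\R^{n+1}_+}|\nabla\psi|^2\,\de\mathbf{x}=2\,\mathcal{E}(\tilde u-u,\Omega)$, and standard Poisson-kernel estimates give $|\psi(\mathbf{x})|=O(|\mathbf{x}|^{-n})$ together with $|\nabla\psi(\mathbf{x})|=O(|\mathbf{x}|^{-n-1})$ as $|\mathbf{x}|\to\infty$.

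A direct algebraic expansion of the quadratic form $\mathcal{E}(\cdot,\Omega)$ yields the identity
$$\mathcal{E}(\tilde u,\Omega)-\mathcal{E}(u,\Omega) \,=\, \mathcal{E}(\tilde u-u,\Omega) \,+\, \big\langle(-\Delta)^{1/2} u,\tilde u-u\big\rangle_\Omega\,,$$
so it suffices to establish $\big\langle(-\Delta)^{1/2} u,\tilde u-u\big\rangle_\Omega \geq -\mathcal{E}(\tilde u-u,\Omega)$. My strategy is to perturb $u^\e$ towards $\tilde u^\e$ by multiplying $\psi$ by compactly supported cutoffs and to pass to the limit. Concretely, let $(\chi_R)$ be a sequence of smooth functions compactly supported in $\R^{n+1}_+\cup(\Omega\times\{0\})$, with $\chi_R\equiv 1$ on a fixed neighborhood of $K\times\{0\}$, $\chi_R\to 1$ pointwise on $\R^{n+1}_+$, and $\|\psi\,|\nabla\chi_R|\,\|_{L^2(\R^{n+1}_+)}\to 0$ as $R\to\infty$. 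Such $\chi_R$ may be built by combining a cutoff based on $\rmdist(\mathbf{x},(\R^n\setminus\Omega)\times\{0\})$ with a radial cutoff at infinity, tuned against the decay of $\psi$.

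Set $\tilde w_R:=u^\e+\chi_R\psi=(1-\chi_R)u^\e+\chi_R\tilde u^\e$, and pick any admissible bounded open set $G_R$ with $\rmspt(\chi_R)\subset G_R\cup\partial^0 G_R$ and $\overline{\partial^0 G_R}\subset\Omega$. The trace of $\tilde w_R$ on $\R^n$ equals $(1-\chi_R)u+\chi_R\tilde u$; on $K$ (where $\chi_R=1$) this is $\tilde u$, while on $\R^n\setminus K$ (where $\tilde u=u$) it is $u$, so in both cases it is $\mathcal{N}$-valued. Since $\tilde w_R=u^\e$ outside $\rmspt(\chi_R)$, the map $\tilde w_R$ is a valid competitor for $u^\e$ in $G_R$ under the partially free boundary condition, and the minimality hypothesis gives
$$0 \,\leq\, \int_{G_R}\!|\nabla\tilde w_R|^2\,\de\mathbf{x} - \int_{G_R}\!|\nabla u^\e|^2\,\de\mathbf{x} \,=\, 2\!\int_{\R^{n+1}_+}\!\nabla u^\e\cdot\nabla(\chi_R\psi)\,\de\mathbf{x} \,+\, \int_{\R^{n+1}_+}\!|\nabla(\chi_R\psi)|^2\,\de\mathbf{x}\,.$$
By the Dirichlet-to-Neumann identity \eqref{idenerumsqrtlap} applied with the admissible test $\Phi=\chi_R\psi$ (whose trace on $\R^n$ is exactly $\tilde u-u$), the first integral on the right equals $\big\langle(-\Delta)^{1/2} u,\tilde u-u\big\rangle_\Omega$. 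Dominated convergence yields $\int\chi_R^2|\nabla\psi|^2\,\de\mathbf{x}\to\int_{\R^{n+1}_+}|\nabla\psi|^2\,\de\mathbf{x}=2\mathcal{E}(\tilde u-u,\Omega)$, while Cauchy--Schwarz together with $\|\psi\,|\nabla\chi_R|\,\|_{L^2}\to 0$ takes care of the remaining cross and quadratic error terms in $\nabla\chi_R$. Letting $R\to\infty$ we obtain $0\leq 2\big\langle(-\Delta)^{1/2} u,\tilde u-u\big\rangle_\Omega+2\mathcal{E}(\tilde u-u,\Omega)$, which combined with the identity above gives $\mathcal{E}(\tilde u,\Omega)\geq\mathcal{E}(u,\Omega)$.

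The hard part will be the construction of the cutoffs $\chi_R$: the requirement that $\chi_R$ vanish on $(\R^n\setminus\Omega)\times\{0\}$ (needed for compact support in $\R^{n+1}_+\cup(\Omega\times\{0\})$) forces a transition near $\partial\Omega\times\{0\}$, where $\psi$ is not necessarily small. Balancing this transition against the fact that $\psi$ vanishes identically on $(\R^n\setminus K)\times\{0\}$ and decays linearly in $x_{n+1}$ with $\rmdist(x,K)^{-(n+1)}$ near the lower boundary is what makes the construction delicate, and would absorb most of the effort in a detailed proof.
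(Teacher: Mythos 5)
Your proposal is essentially the proof in the paper: the reduction to the one-sided estimate $\langle(-\Delta)^{1/2}u,\tilde u-u\rangle_\Omega+\mathcal{E}(\tilde u-u,\Omega)\geq 0$ via quadratic expansion, the use of $u^\e+\chi_R\psi$ with $\psi=(\tilde u-u)^\e$ as a competitor in admissible half-space domains, the Dirichlet-to-Neumann identity \eqref{idenerumsqrtlap} to identify the cross term, and the limiting argument are all the same; your list of requirements on $\chi_R$ is precisely the list the paper's sequence $(h_k)$ is built to satisfy.

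What you leave open --- constructing the cutoffs so that $\|\psi\,|\nabla\chi_R|\,\|_{L^2(\R^{n+1}_+)}\to 0$ --- is the real content of the paper's Step~2, and the heuristic ``tuned against the decay of $\psi$'' needs sharpening. The dangerous region is the thin slab over $(\R^n\setminus\Omega')\times\{0\}$ where $\chi_R$ must drop from $1$ to $0$: there $|\nabla\chi_R|$ scales like the inverse slab height and far-field Poisson decay of $\psi$ gives nothing. The paper builds the cutoff in three stages --- $\chi_i(x_{n+1})\psi$ at large height (Lemma~\ref{estiprelim} is what ensures this lies in $H^1$ and not merely $\dot H^1$), a radial cutoff at infinity, and finally a cutoff $\zeta_\ell$ vanishing only in a slab $\{x_{n+1}\lesssim 2^{-\ell}\}$ over $(\R^n\setminus\Omega')\times\{0\}$ --- and the decisive estimate for the last stage is Hardy's inequality in the half-space: since $\psi$ has vanishing trace on $\{\rmdist(x,\Omega')>\delta\}\times\{0\}$, one has
$$\int_{\{\rmdist(x,\Omega')>\delta\}\times\R_+}\frac{|\psi|^2}{x_{n+1}^2}\,\de\mathbf{x}\leq C\int_{\R^{n+1}_+}|\nabla\psi|^2\,\de\mathbf{x}<\infty\,,$$
so that the contribution of the shrinking slab tends to zero by dominated convergence. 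This Hardy estimate is the precise form of the ``linear decay in $x_{n+1}$'' you gesture at, and without it the cutoff argument does not close.
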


\begin{proof}
According to \cite[Corollary 2.10 \& Proposition 4.9]{MS}, 
if $u\in \widehat H^{1/2}(\Omega;\mathcal{N})$ is a minimizing $1/2$-harmonic map in $\Omega$, then $u^\e$ is a minimizing harmonic map with free boundary in every admissible bounded  open set $G\subset\R^{n+1}_+$ such that $\overline{\partial^0G}\subset \Omega\times\{0\}$. It hence remains to prove the converse statement. We thus assume that $u^\e$ is minimizing harmonic map with free boundary in every admissible bounded  open set $G\subset\R^{n+1}_+$ such that $\overline{\partial^0G}\subset \Omega\times\{0\}$. 
\vskip5pt

\noindent{\it Step 1.} We consider an arbitrary competitor $w\in \widehat H^{1/2}(\Omega;\mathcal{N})$, and we assume that $h:=w-u$ is compactly supported in an open set $\Omega^\prime\subset\R^n$ with $\overline{\Omega^\prime}\subset\Omega$. The map $h$ being compactly supported in $\Omega^\prime$, it belongs to $H^{1/2}_{00}(\Omega;\R^m)\cap  L^1(\R^n)$. In view of identity \eqref{iden1/2H1}, its harmonic extension $h^{\rm e}$ belongs to  the homogeneous Sobolev space $\dot H^1(\R^{n+1}_+;\R^m)$. 

We claim that there exists a sequence $(h_k)\subset H^1(\R^{n+1}_+;\R^m)$ such that each $h_k$ is supported in $G_k\cup\partial^0G_k$ for some admissible bounded open set $G_k\subset\R^{n+1}_+$ satisfying $\overline{\partial^0 G_k}\subset \Omega\times\{0\}$, ${h_k}_{|\R^n\times\{0\}}=h$, and 
\begin{equation}\label{approxequmin}
\int_{\R^{n+1}_+}|\nabla h_k|^2\,\de {\bf x}\mathop{\longrightarrow}\limits_{k\to\infty}\int_{\R^{n+1}_+}|\nabla h^{\rm e}|^2\,\de{\bf x}\,.
\end{equation}
Before proving this claim, we complete the proof of the theorem.

By assumption $u^{\rm e}$ is a minimizing harmonic map with free boundary in $G_k$. Since $u^{\rm e}+h_k$ is an admissible competitor for the minimality of $u^{\rm e}$ in $G_k$, we infer that  
\begin{equation}\label{testminhk}
\frac{1}{2}\int_{\R^{n+1}_+}|\nabla h_k|^2\,\de{\bf x}+\int_{\R^{n+1}_+}\nabla h_k\cdot \nabla u^{\rm e}\,\de{\bf x} ={\bf E}(u^{\rm e}+h_k,G_k)- {\bf E}(u^{\rm e},G_k)\geq 0\,. 
\end{equation}
On the other hand, \eqref{neumdef} and \eqref{idenerumsqrtlap} yield 
\begin{equation}\label{ven0953}
\int_{\R^{n+1}_+}\nabla h_k\cdot \nabla u^{\rm e}\,\de {\bf x}=\big\langle (-\Delta)^{\frac12}u,h\big\rangle_{\Omega} \,,
\end{equation}
since $h_k=h$ on $\R^n\times\{0\}$. Letting $k\to\infty$ in \eqref{testminhk}, we deduce from \eqref{approxequmin} and \eqref{ven0953} that 
\begin{equation}\label{vac1449}
\frac{1}{2} \int_{\R^{n+1}_+}|\nabla h^{\rm e}|^2\,\de{\bf x}+\big\langle (-\Delta)^{\frac12}u,h\big\rangle_{\Omega}\geq 0\,.
\end{equation}
In view of \eqref{iden1/2H1} and \eqref{weaksqrtlap}, we have  
$$\frac{1}{2} \int_{\R^{n+1}_+}|\nabla h^{\rm e}|^2\,\de{\bf x} =\mathcal{E}(h,\Omega)
 =\mathcal{E}(w,\Omega)+\mathcal{E}(u,\Omega) - \big\langle (-\Delta)^{\frac12}u,w\big\rangle_{\Omega} \,,$$
and since 
$$\big\langle (-\Delta)^{\frac12}u,h\big\rangle_{\Omega}= \big\langle (-\Delta)^{\frac12}u,w\big\rangle_{\Omega}-\big\langle (-\Delta)^{\frac12}u,u\big\rangle_{\Omega}= \big\langle (-\Delta)^{\frac12}u,w\big\rangle_{\Omega}-2\mathcal{E}(u,\Omega)\,,$$
inequality \eqref{vac1449} yields 
$$\mathcal{E}(w,\Omega)-\mathcal{E}(u,\Omega) \geq 0\,. $$
Thus $u$ is indeed a minimizing $1/2$-harmonic map in $\Omega$.  
\vskip5pt

\noindent{\it Step 2.}
We now proceed to the construction of the sequence $(h_k)$ satisfying \eqref{approxequmin}.  For an integer $i\geq 1$, we denote by $\chi_i\in C^\infty(\R;[0,1])$ a smooth cut-off function satisfying $\chi_i(t)=1$ for $|t|\leq i$, and $\chi_i(t)=0$ for $|t|\geq i+1$, with $|\chi_i^\prime|\leq C$ for some constant $C$ independent of $i$. We first define 
$$h_i^{(1)}({\bf x}):=\chi_i(x_{n+1})h^{\rm e}({\bf x})\,.$$
By Lemma \ref{estiprelim},  $h_i^{(1)}\in L^2(\R^{n+1}_+)$, so that $h_i^{(1)}\in H^1(\R^{n+1}_+;\R^m)$. Moreover, $h_i^{(1)}=h$ on $\R^n\times\{0\}$, and 
\begin{multline*}
\int_{\R^{n+1}_+}|\nabla h_i^{(1)}|^2\,\de{\bf x}= \int_{\R^{n+1}_+}\chi_i^2|\nabla h^{\rm e}|^2\,\de{\bf  x}\\
+2\int_{\{i<x_{n+1}<i+1\}}\chi_i \chi^\prime_i\, h^{\rm e}\cdot\partial_{n+1} h^{\rm e}\,\de{\bf  x}
+\int_{\{i<x_{n+1}<i+1\}}|\chi^\prime_i|^2|h^{\rm e}|^2\,\de{\bf x}\,.
\end{multline*}
From Lemma \ref{estiprelim} and Fubini's theorem, we infer that 
\begin{equation}\label{coucou1133}
 \int_{\{i<x_{n+1}<i+1\}}|\chi^\prime_i|^2|h^{\rm e}|^2\,\de{\bf x}\leq \frac{C}{i^n}\|h\|^2_{L^1(\R^n)}\,.
 \end{equation}
Since $ h^{\e}\in \dot H^1(\R^{n+1}_+;\R^m)$, it follows by dominated convergence,  \eqref{coucou1133}, and H\"older's inequality, that 
$$\int_{\R^{n+1}_+}|\nabla h_i^{(1)}|^2\,\de{\bf x}\mathop{\longrightarrow}\limits_{i\to\infty}  \int_{\R^{n+1}_+}|\nabla h^{\rm e}|^2\,\de {\bf x}\,.$$
We can thus find an integer $i_k\geq 1$ such that 
\begin{equation}\label{firstapprox1249}
\left|\int_{\R^{n+1}_+}|\nabla h_{i_k}^{(1)}|^2\,\de {\bf x}- \int_{\R^{n+1}_+}|\nabla h^{\rm e}|^2\,\de {\bf x}\right|\leq 2^{-k-2} \,.
\end{equation}
Next we define for an integer $j\geq 1$, 
$$h_j^{(2)}({\bf x}):=\chi_j(|{\bf x}|)h_{i_k}^{(1)}({\bf x})\,.$$
Then $h_j^{(2)}\in H^1(\R^{n+1}_+;\R^m)$, and one classically shows (using $h_{i_k}^{(1)}\in H^1(\R^{n+1}_+;\R^m)$) that
$$\int_{\R^{n+1}_+}|\nabla h_j^{(2)}|^2\,\de {\bf x}\mathop{\longrightarrow}\limits_{j\to\infty}  \int_{\R^{n+1}_+}|\nabla h_{i_k}^{(1)}|^2\,\de {\bf x}\,.$$
In view of \eqref{firstapprox1249}, we can find an integer $j_k\geq 1$ in such a way that 
\begin{equation}\label{firstapprox1249bis}
\left|\int_{\R^{n+1}_+}|\nabla h_{j_k}^{(2)}|^2\,\de {\bf x}- \int_{\R^{n+1}_+}|\nabla h^{\rm e}|^2\,\de {\bf x}\right|\leq 2^{-k-1} \,,
\end{equation}
and $\Omega\times\{0\}\subset \partial^0B_{j_k}$ to ensure that $h_{j_k}^{(2)}=h_{i_k}^{(1)} =h$ on $\R^n\times\{0\}$. 
\vskip3pt

Let us now fix a small parameter $\delta>0$ such that ${\rm dist}(\partial\Omega,\Omega^\prime)>3\delta$, and consider a smooth cut-off function $\psi\in C^{\infty}(\R;[0,1])$ satisfying $\psi(t)=0$ for $|t|<\delta$, and $\psi(t)=1$ for $|t|\geq 2\delta$. For an integer $\ell\geq 1$, we  consider a further cut-off function $\eta_\ell\in C^{\infty}(\R;[0,1])$ such that $\eta_\ell(t)=1$ for $|t|\leq 2^{-\ell}$, $\eta_\ell(t)=0$ for $|t|\geq 2^{-\ell+1}$, and $|\eta_\ell^\prime|\leq C2^\ell$ for some constant $C$ independent of $\ell$. Setting 
$$\zeta_\ell({\bf x}):=1-\eta_\ell(x_{n+1})\psi\big({\rm dist}(x,\Omega^\prime)\big)\,,$$
we define
$$h_\ell^{(3)}({\bf x}):=\zeta_\ell({\bf x})h_{j_k}^{(2)}({\bf x})\,.$$ 
Setting $G_\ell $ to be the interior of the set 
$$\Big(\big\{{\rm dist}(x,\Omega^\prime)\leq 2\delta\,,\; 0\leq x_{n+1}\leq 2^{-\ell} \big\} \cup\big\{x_{n+1}\geq 2^{-\ell}\big\}\Big)\cap B_{j_k}\,,$$
then $G_\ell$ is an admissible bounded  open set satisfying $\overline{\partial^0G_\ell}\subset \Omega\times\{0\}$.  The map $h_\ell^{(3)}$ belongs to $H^1(\R^{n+1}_+;\R^m)$, it is supported in $G_\ell\cup\partial^0G_\ell$, and $h_\ell^{(3)}=h_{j_k}^{(2)}=h$ on the boundary $\R^n\times\{0\}$. 
Then, we have
\begin{multline}\label{vac21244}
\int_{\R^{n+1}_+}|\nabla h_\ell^{(3)}|^2\,\de {\bf x}= \int_{\R^{n+1}_+}\zeta_\ell^2|\nabla h_{j_k}^{(2)}|^2\,\de {\bf  x}\\
+2\int_{\R^{n+1}_+}\zeta_\ell \big(\nabla\zeta_\ell\cdot \nabla h_{j_k}^{(2)}\big)\cdot h_{j_k}^{(2)}\,\de{\bf  x}
+\int_{\R^{n+1}_+}|h_{j_k}^{(2)}|^2|\nabla\zeta_\ell|^2\,\de{\bf x}\,.
\end{multline}
Writing $A_\ell:=\big\{{\rm dist}(x,\Omega^\prime)>\delta\,,2^{-\ell}<x_{n+1}<2^{-\ell+1}\;\big\}$, we estimate
\begin{equation}\label{ven071028}
\int_{\R^{n+1}_+}|h_{j_k}^{(2)}|^2|\nabla\zeta_\ell|^2\,\de{\bf x}\leq C_\delta\int_{A_\ell} \frac{|h_{j_k}^{(2)}|^2}{x^2_{n+1}}\,\de {\bf x} \,.
\end{equation}
Since $h_{j_k}^{(2)}=h=0$ on $\big\{{\rm dist}(x,\Omega^\prime)>\delta\big\}\times\{0\}$, we infer from Hardy's inequality that 
\begin{multline*}
\int_{\big\{{\rm dist}(x,\Omega^\prime)>\delta\big\}\times\R_+} \frac{|h_{j_k}^{(2)}|^2}{x^2_{n+1}}\,\de {\bf x} \leq C\int_{\big\{{\rm dist}(x,\Omega^\prime)>\delta\big\}\times\R_+} |\nabla h_{j_k}^{(2)}|^2\,\de{\bf x} \\ \leq C\int_{\R^{n+1}_+}|\nabla h_{j_k}^{(2)}|^2\,\de{\bf x}\,.
\end{multline*}
As a consequence, 
$$\int_{A_\ell} \frac{|h_{j_k}^{(2)}|^2}{x^2_{n+1}}\,\de{\bf x}\mathop{\longrightarrow}\limits_{\ell\to\infty}0\,,$$
by dominated convergence. In turn, \eqref{ven071028} implies
$$\int_{\R^{n+1}_+}|h_{j_k}^{(2)}|^2|\nabla\zeta_\ell|^2\,\de{\bf x} \mathop{\longrightarrow}\limits_{\ell\to\infty}0\,.$$
Back to \eqref{vac21244}, we deduce (still by dominated convergence and H\"older's inequality) that 
$$\int_{\R^{n+1}_+}|\nabla h_\ell^{(3)}|^2\,\de{\bf x} \mathop{\longrightarrow}\limits_{\ell\to\infty}  \int_{\R^{n+1}_+}|\nabla h_{j_k}^{(2)}|^2\,\de{\bf  x}\,.$$
In view of \eqref{firstapprox1249bis}, we may now select a subsequence $\{\ell_k\}$ such that 
$$\left|\int_{\R^{n+1}_+}|\nabla h_{\ell_k}^{(3)}|^2\,\de{\bf x}- \int_{\R^{n+1}_+}|\nabla h^{\rm e}|^2\,\de {\bf x}\right|\leq 2^{-k} \,,$$
and the conclusion follows for $h_k:=h_{\ell_k}^{(3)}$ and $G_k:=G_{\ell_k}$. 
\end{proof}

As a consequence of Theorem \ref{equivmin}, we can derive a partial regularity  theory for minimizing $1/2$-harmonic from the regularity of minimizing harmonic maps with free boundary (see \cite{MS,Mos}).    
Notice that, in applying Theorem \ref{regfreebd} and Theorem \ref{dimred}, we use that $u^\e\in L^\infty(\R^{n+1}_+)$ by \eqref{Linftyextineq} and the fact that $u$ is taking values in the compact manifold $\mathcal{N}$. 
Recall that ${\rm sing}(u)$ denotes the singular set of $u$ in $\Omega$ (see  \eqref{defsingsetu}), 
which is a relatively closed subset of $\Omega$. 

\begin{corollary}[{\cite{Mos} and \cite[Theorem 1.2 \& Remark 4.24]{MS}}]\label{reghalfharm}
Let $\Omega\subset\R^n$ be a bounded smooth open set. If $u\in \widehat H^{1/2}(\Omega;\mathcal{N})$ is a minimizing $1/2$-harmonic map in $\Omega$, then $u\in C^\infty(\Omega\setminus{\rm sing}(u))$, ${\rm sing}(u)$ is locally finite in~$\Omega$ for $n=2$, and ${\rm dim}_{\mathcal{H}}\,{\rm sing}(u)\leq n-2$ for $n\geq 3$.
\end{corollary}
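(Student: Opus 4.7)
The result is a direct corollary of the transfer of minimality in Theorem~\ref{equivmin} combined with the regularity theory for minimizing harmonic maps with free boundary (Theorem~\ref{regfreebd}). The plan is to transfer both the smoothness and the dimension bounds from the extension $u^\e$ back to $u$ via the boundary trace.

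First, I would fix an arbitrary $x_0\in\Omega$ and choose $R>0$ small enough so that $\overline{D_R(x_0)}\subset\Omega$. The half-ball $G:=B_R^+\bigl((x_0,0)\bigr)$ is admissible in the sense of the notations, with $\partial^0G=D_R(x_0)\times\{0\}$ and $\overline{\partial^0G}\subset\Omega\times\{0\}$. By Theorem~\ref{equivmin}, $u^\e$ is a minimizing harmonic map with (partially) free boundary in $G$, valued in $\mathcal{N}$ on $\partial^0G$. Moreover, since $\mathcal{N}$ is compact and $u$ is $\mathcal{N}$-valued, the estimate \eqref{Linftyextineq} gives $u^\e\in L^\infty(G)$. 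Theorem~\ref{regfreebd} then yields $u^\e\in C^\infty\bigl((G\cup\partial^0G)\setminus\Sigma\bigr)$, where
$$\Sigma:=\partial^0G\setminus\bigl\{\mathbf{x}\in\partial^0G:u^\e\text{ is continuous near }\mathbf{x}\text{ in }\overline{G^+}\bigr\}\,,$$
together with $\mathcal{H}^0$-locally finiteness of $\Sigma$ if $n=2$ and ${\rm dim}_{\mathcal{H}}\Sigma\leq n-2$ if $n\geq 3$.

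The heart of the argument is the identification of $\Sigma$ with $\operatorname{sing}(u)\cap D_R(x_0)$ (under the usual identification $\partial^0G\simeq D_R(x_0)$). One inclusion is immediate: if $u^\e$ is continuous in a neighborhood of $(x_0',0)$ in $\overline{G^+}$, then its restriction to $\partial^0G$, namely $u$, is continuous at $x_0'$, hence $x_0'\notin\operatorname{sing}(u)$. For the converse, if $u$ is continuous on a neighborhood $U\subset D_R(x_0)$ of $x_0'$, then $u^\e$ is harmonic in $\R^{n+1}_+$ with bounded, locally continuous Dirichlet data $u$ on $U$; classical boundary regularity for the Poisson integral then ensures that $u^\e$ extends continuously to $U\cup\R^{n+1}_+$ near $(x_0',0)$, so $(x_0',0)\notin\Sigma$. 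This identification, together with the smoothness of $u^\e$ up to $\partial^0G$ away from $\Sigma$, gives $u\in C^\infty\bigl(D_R(x_0)\setminus\operatorname{sing}(u)\bigr)$ by taking the trace.

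Finally, covering $\Omega$ by such discs $D_R(x_0)$ and using the local nature of both the regularity and the Hausdorff dimension statements, one patches to obtain $u\in C^\infty(\Omega\setminus\operatorname{sing}(u))$, local finiteness of $\operatorname{sing}(u)$ in $\Omega$ when $n=2$, and ${\rm dim}_{\mathcal{H}}\operatorname{sing}(u)\leq n-2$ when $n\geq 3$. I expect the main technical step to be the identification $\Sigma=\operatorname{sing}(u)\cap D_R(x_0)$, and specifically the ``continuity of the trace implies continuity up to the boundary of the extension'' direction: it hinges on the standard but somewhat delicate boundary behaviour of the Poisson integral for bounded data on a half-space, applied locally on $U$ and combined with the global $L^\infty$-bound \eqref{Linftyextineq}. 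Everything else is a direct appeal to the already established free boundary theory.
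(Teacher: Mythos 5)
Your proposal is correct and follows exactly the route the paper indicates: apply the minimality transfer (Theorem~\ref{equivmin}) on admissible half-balls $B_R^+\bigl((x_0,0)\bigr)$, invoke the free-boundary regularity theory (Theorem~\ref{regfreebd}) using the $L^\infty$ bound \eqref{Linftyextineq}, and pull the regularity and the dimension bound back to $u$ via the trace. The paper leaves the identification of the two singular sets implicit (deferring details to \cite{MS,Mos}), and you correctly fill it in with the two-way argument — the trivial trace direction and the Poisson-integral boundary-continuity direction — so nothing is missing.
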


Exactly as in Theorem \ref{dimred}, the estimate on the Hausdorff dimension of ${\rm sing}(u)$ can be improved according to the existence or non existence of $0$-homogeneous minimizing  $1/2$-harmonic maps, i.e., maps in $H^{1/2}_{\rm loc}(\R^n;\mathcal{N})$ which are minimizing in every ball. 

\begin{definition}\label{def0homodemiharm}
A map $u_0\in H^{1/2}_{\rm loc}(\R^n;\mathcal{N})$ is said to be a $0$-homogenous $1/2$-harmonic map if $u_0$ is $0$-homogeneous and a weakly $1/2$-harmonic map in every ball of $\R^n$. Similarly, $u_0$ is said to be a  $0$-homogenous minimizing $1/2$-harmonic map if it is $0$-homogeneous and a  minimizing  $1/2$-harmonic map in every ball of~$\R^n$.
\end{definition}

\begin{corollary}\label{cordimreduc}
Let $\bar\ell=\bar\ell(\mathcal{N})$ be the largest integer such that any $0$-homogeneous minimizing $1/2$-harmonic map from $\R^{j}$ into $\mathcal{N}$ is a constant for each $j=1,\ldots,\bar\ell$. For any minimizing $1/2$-harmonic map $u$ as in Corollary~\ref{reghalfharm}, we have ${\rm sing}(u)=\emptyset$ if $n\leq \bar\ell$, ${\rm sing}(u)$ is locally finite in $\Omega$ if $n=\bar\ell+1$, and 
${\rm dim}_{\mathcal{H}}\,{\rm sing}(u)\leq n-\bar\ell-1$ if $n\geq \bar\ell+2$. Moreover, $\bar\ell=\ell$ where $\ell$ is given by Theorem~\ref{dimred}. 
\end{corollary}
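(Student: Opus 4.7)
The plan is to deduce this corollary from Theorem~\ref{dimred} via the minimality transfer of Theorem~\ref{equivmin}. The crux is the identity $\bar\ell=\ell$: once it holds, for any point $x_0\in\Omega$ and any $r>0$ small enough that $D_r(x_0)\subset\Omega$, Theorem~\ref{equivmin} gives that $u^\e$ is a minimizing harmonic map with free boundary on the half-ball $B_r^+((x_0,0))$ (which is admissible with $\partial^0B_r^+((x_0,0))=D_r(x_0)\times\{0\}$), and Theorem~\ref{dimred} then yields the claimed estimates on ${\rm sing}(u^\e)$ locally around each point of $\partial^0B_r^+((x_0,0))$.

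To prove $\bar\ell=\ell$, I establish that restriction to $\R^j\times\{0\}$ and harmonic extension define mutually inverse bijections between the set of bounded $0$-homogeneous minimizing harmonic maps with free boundary $v_0\colon\R^{j+1}_+\to\R^m$ satisfying $v_0(\partial\R^{j+1}_+)\subset\mathcal{N}$ and the set of $0$-homogeneous minimizing $1/2$-harmonic maps $u_0\in H^{1/2}_{\rm loc}(\R^j;\mathcal{N})$. For the extension direction, a straightforward change of variable in the Poisson integral shows that $u_0^\e$ is $0$-homogeneous on $\R^{j+1}_+$, it is bounded by \eqref{Linftyextineq}, and Theorem~\ref{equivmin} applied on every disc $D_R(0)\subset\R^j$ promotes the minimality of $u_0$ to the minimality of $u_0^\e$ with free boundary on every admissible $G\subset\R^{j+1}_+$ with $\overline{\partial^0G}\subset\R^j\times\{0\}$. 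For the converse, Remark~\ref{rembd0hom} asserts that any bounded $0$-homogeneous minimizing harmonic map with free boundary coincides with $(u_0)^\e$ for $u_0:=v_0|_{\R^j\times\{0\}}$, and Theorem~\ref{equivmin} then yields that $u_0$ is a minimizing $1/2$-harmonic map in every disc $D_R$. Since a bounded harmonic function on $\R^{j+1}_+$ with constant trace is constant by Liouville's theorem, the bijection clearly preserves constancy, so $\bar\ell=\ell$.

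It remains to identify ${\rm sing}(u)\subset\Omega$ with ${\rm sing}(u^\e)\cap(\Omega\times\{0\})$ under the correspondence $x\leftrightarrow(x,0)$. One direction is immediate: if $u^\e$ is continuous in a neighborhood of $(x_0,0)$ in $\overline{\R^{n+1}_+}$, then its trace $u$ is continuous near $x_0$. For the converse, if $u$ is continuous in some disc $D_r(x_0)\subset\Omega$, then Corollary~\ref{reghalfharm} gives $u\in C^\infty(D_r(x_0))$, and classical elliptic boundary regularity applied to the harmonic function $u^\e$ with this smooth Dirichlet datum on $D_r(x_0)$ shows that $u^\e$ extends smoothly up to the boundary in a neighborhood of $(x_0,0)$. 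Combining this identification with the bijection of the previous paragraph and with Theorem~\ref{dimred} completes the proof. No serious obstacle is expected, as the heavy lifting has already been carried out in Theorem~\ref{equivmin} and Remark~\ref{rembd0hom}.
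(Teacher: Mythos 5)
Your proof is correct and follows essentially the same approach as the paper: establish $\bar\ell=\ell$ via the restriction/extension correspondence furnished by Theorem~\ref{equivmin} and Remark~\ref{rembd0hom}, then transfer the conclusion of Theorem~\ref{dimred} to $u$ through $u^\e$. Your explicit verification that ${\rm sing}(u)$ coincides with ${\rm sing}(u^\e)\cap(\Omega\times\{0\})$ is a careful addition that the paper leaves implicit; note that only the inclusion ${\rm sing}(u)\subset{\rm sing}(u^\e)\cap(\Omega\times\{0\})$, which follows from continuity of the trace, is strictly needed to transfer the dimension bound.
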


\begin{proof}
By Theorem  \ref{equivmin}, if $u_0$ is a $0$-homogeneous minimizing $1/2$-harmonic map from $\R^{j}$ into $\mathcal{N}$, then $(u_0)^\e$ is a bounded minimizing harmonic map with free boundary in every half ball $B_R^+$. Since the harmonic extension preserves homogeneity, $(u_0)^\e$ is also $0$-homogeneous. Hence $(u_0)^\e$ is constant whenever $j\leq \ell$ with $\ell$ given by Theorem~\ref{dimred}, and so is $u_0$. This shows that $\ell\leq \bar \ell$. The other way around, if $v_0:\R^{j+1}_+\to \R^d$ with $v_0(\R^j\times\{0\})\subset\mathcal{N}$ is a bounded and $0$-homogeneous  minimizing harmonic map with free boundary, then $v_0=({v_0}_{|\R^j\times\{0\}})^\e$ according to Remark~\ref{rembd0hom}. By Theorem  \ref{equivmin}, it follows that ${v_0}_{|\R^j\times\{0\}}$ is a  $0$-homogeneous minimizing $1/2$-harmonic map from $\R^{j}$ into $\mathcal{N}$. By definition of $\bar \ell$, ${v_0}_{|\R^j\times\{0\}}$ is constant whenever $j\leq \bar \ell$. Hence $v_0$ is constant for $j\leq \bar\ell$, which shows that $\bar\ell\leq \ell$.  We have thus proved that $\bar\ell=\ell$. 
\vskip3pt

Now, if $u$ is as in Corollary~\ref{reghalfharm}, then Theorem  \ref{equivmin} tells us that $u^\e$ is a bounded minimizing harmonic map with free boundary in every admissible bounded open set $G\subset \R^{n+1}_+$ such that $\overline{\partial^0G}\subset\Omega\times\{0\}$. Hence  the conclusion follows from Theorem~\ref{regfreebd} knowing that $\bar\ell=\ell$.  
\end{proof}

\section{Minimizing $1/2$-harmonic maps into a sphere} \label{SectionSphere}     

\subsection{$1/2$-harmonic circles}\label{sectdefcircles}

The purpose of this first subsection is to recall the notion  $1/2$-harmonic circle into a manifold $\mathcal{N}$, and its relation established in \cite{MS} with $0$-homogeneous $1/2$-harmonic maps from $\R^2$ into $\mathcal{N}$. Once again, $\mathcal{N}$ is assumed to be a smooth and compact submanifold of $\R^m$ without boundary. 
Let us start with the definition of a $1/2$-harmonic circle into $\mathcal{N}$. 
First, the $1/2$-Dirichlet energy of a map $g\in H^{1/2}(\mathbb{S}^1;\mathbb{R}^{m})$ is defined as 
\begin{equation}\label{defencirc}
\mathcal{E}(g,\mathbb{S}^1):=\frac{\gamma_1}{4}\iint_{\mathbb{S}^1\times\mathbb{S}^1}\frac{|g(x)-g(y)|^2}{|x-y|^2}\,\de x\de y\quad\text{with}\quad\gamma_1=\frac{1}{\pi}\,.
\end{equation}
The choice of the constant $\gamma_1$ in \eqref{defencirc} is made in such a way that  
\begin{equation}\label{idenergcircl}
\mathcal{E}(g,\mathbb{S}^1)=\frac{1}{2}\int_{\mathbb{D}}|\nabla w_g|^2\, \de x\qquad \forall g\in H^{1/2}(\mathbb{S}^1;\R^m)\,,
\end{equation}
where $w_g\in H^1(\mathbb{D};\R^m)$ denotes the (unique) harmonic extension of $g$ to the unit disc $\mathbb{D}$ of the plane $\R^2$, i.e., the unique solution of 
\begin{equation}\label{exttodiscwg}
\begin{cases} 
\Delta w_g=0 & \text{in $\mathbb{D}$}\,\\
w_g=g & \text{on $\partial\mathbb{D}=\mathbb{S}^1$}\,,
\end{cases}
\end{equation} 
see e.g. \cite[Section 4.2]{MS}. 

\begin{definition}
A map $g\in H^{1/2}(\mathbb{S}^1;\mathcal{N})$ is said to be a (weakly) $1/2$-harmonic circle into $\mathcal{N}$ if 
$$\left[\frac{\de}{\de t}\mathcal{E}\left(\frac{g+t\varphi}{|g+t\varphi|},\mathbb{S}^1\right)\right]_{t=0}=0 \qquad\forall \varphi\in C^\infty(\mathbb{S}^1;\R^m)\,.$$
\end{definition}
\vskip5pt

\begin{remark}\label{remsmoothharmcirc} 
Any $1/2$-harmonic circle $g$ is smooth, i.e.,  $g\in C^\infty(\mathbb{S}^1)$. This follows directly from the regularity  theory for weakly $1/2$-harmonic maps in one space dimension of \cite{DaRi1,DaRi2} (see also \cite[Theorem~4.18 \&  Remark~4.24]{MS}). Indeed,  as in \cite[Remark~4.29]{MS},  $g\in H^{1/2}(\mathbb{S}^1;\mathcal{N})$ is weakly $1/2$-harmonic if and only if $g\circ \mathfrak{C}_{|\R}\in \dot H^{1/2}(\R;\mathcal{N})$ is a weakly $1/2$-harmonic map on $\R$, where $\mathfrak{C}:\overline{\R^2_+}\to \overline{\mathbb{D}}\setminus\{(1,0)\}$ is the (conformal) Cayley transform (see \eqref{caltrans}) and $ \mathfrak{C}_{|\R}:\R\to\mathbb{S}^1\setminus\{(1,0)\}$ its restriction to $\R\simeq\partial\R^2_+$. Hence the regularity result  of  \cite{DaRi1,DaRi2} applies, and it yields 
 $g\in C^\infty(\mathbb{S}^1\setminus \{(1,0)\})$. On the other hand, the map $\tilde g(x):=g(-x)$ is  clearly $1/2$-harmonic (by invariance of the energy under the symmetry $x\mapsto -x$), so that $\tilde g\in C^\infty(\mathbb{S}^1\setminus \{(1,0)\})$. Thus $g$ is in fact also smooth near $(1,0)$, and the conclusion follows. 
\end{remark}

We are interested in $1/2$-harmonic circles since they appear as angular profiles of $0$-homogeneous $1/2$-harmonic maps on $\R^2$. More precisely, we have the following proposition proved in \cite[Proposition 4.30]{MS}. (Note that this proposition is stated for $\mathcal{N}=\mathbb{S}^1$, but the proof actually applies to any target manifold $\mathcal{N}$.)

\begin{proposition}[\cite{MS}]\label{profhom1/2harm}
A map $u_0\in H^{1/2}_{\rm loc}(\R^2;\mathcal{N})$ is a $0$-homogeneous $1/2$-harmonic map  if and only if $u_0(x)=g\big(\frac{x}{|x|}\big)$ for some $1/2$-harmonic circle $g:\mathbb{S}^1\to \mathcal{N}$. 
\end{proposition}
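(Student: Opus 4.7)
The plan is to route both sides of the equivalence through the harmonic Poisson extension to $\R^3_+$ and the conformal diffeomorphism $\sigma:\overline{\mathbb{S}^2_+}\to\overline{\mathbb{D}}$ given by stereographic projection from the south pole, $\sigma(\omega)=(\omega_1,\omega_2)/(1+\omega_3)$, which restricts to the identity on the equator (identified with $\mathbb{S}^1$).

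Set $v:=u_0^\e$. Scale invariance of the Poisson kernel transfers the $0$-homogeneity of $u_0$ on $\R^2$ to $v$ on $\R^3_+$. By the Dirichlet-to-Neumann correspondence recalled in \eqref{idenerumsqrtlap}, $u_0$ is a weakly $1/2$-harmonic map in every ball of $\R^2$ if and only if $v$ is a weakly harmonic map on $\R^3_+$ satisfying the free boundary condition $\partial_\nu v\perp\rmTan(v,\mathcal{N})$ on $\partial\R^3_+$. By the analogous identity \eqref{idenergcircl} in two dimensions, $g$ is a weakly $1/2$-harmonic circle if and only if $w_g$ is a weakly harmonic map on $\mathbb{D}$ with free boundary on $\mathcal{N}$ along $\partial\mathbb{D}$. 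Hence the proposition reduces to the equivalence of these two free-boundary problems.

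To connect them, write $v(\rho\omega)=\tilde v(\omega)$ for $\omega\in\mathbb{S}^2_+$ and set $\hat v:=\tilde v\circ\sigma^{-1}$. The polar decomposition $\Delta_{\R^3}=\partial_\rho^2+2\rho^{-1}\partial_\rho+\rho^{-2}\Delta_{\mathbb{S}^2}$ converts $\Delta v=0$ in $\R^3_+$ into $\Delta_{\mathbb{S}^2}\tilde v=0$ on $\mathbb{S}^2_+$, and conformal covariance of the two-dimensional Laplacian then turns this into $\Delta\hat v=0$ in $\mathbb{D}$; since $\hat v|_{\partial\mathbb{D}}=g$, uniqueness of the harmonic extension forces $\hat v=w_g$. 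Moreover, the conformal factor of $\sigma^{-1}$ equals $1$ on $\partial\mathbb{D}$, so the outer normal derivatives are literally preserved: a short computation with $\omega(\phi,\theta)=(\sin\phi\cos\theta,\sin\phi\sin\theta,\cos\phi)$ gives $-\partial_{x_3}v=\partial_\phi\tilde v=\partial_r\hat v$ at the equator. Hence $\partial_\nu v|_{\partial\R^3_+}$ on the unit circle coincides with $\partial_\nu w_g|_{\partial\mathbb{D}}$ at matched points, where both maps take the common value $g$. This yields the equivalence of the two free boundary conditions on the unit circle, and by $0$-homogeneity of $v$ it extends to all of $(\R^2\setminus\{0\})\times\{0\}$.

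The main technical point is the correct translation of admissible test fields: a tangent test field $\Phi$ on $\R^3_+$ must be cut off in the radial variable $\rho$ to produce a compactly supported competitor respecting the $0$-homogeneous structure of $v$, and then pulled back via $\sigma^{-1}$ to a tangent field $\hat\Phi$ on $\mathbb{D}$ with the corresponding constraint on $\partial\mathbb{D}$, in such a way that the Dirichlet integrals transform correctly under the change of variable. Once this bookkeeping is carried out, chaining the two equivalences yields the proposition, and no ingredient beyond conformal invariance of the two-dimensional Dirichlet energy and uniqueness of the harmonic extension is needed.
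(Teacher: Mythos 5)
Your sketch is correct and follows essentially the route of \cite[Proposition~4.30]{MS}, to which the paper delegates the proof: pass to the Poisson extension $v=u_0^\e$, use $0$-homogeneity to reduce the bulk harmonicity and the free boundary condition to a problem on $\mathbb{S}^2_+$, then conjugate by the stereographic projection (which is conformal and restricts to the identity on the equator) to identify $\tilde v\circ\mathfrak{S}$ with $w_g$ and to transport the Dirichlet-to-Neumann formulations on each side via \eqref{idenerumsqrtlap} and \eqref{idenergcircl}. One small remark: the pointwise statement about the conformal factor of $\mathfrak{S}$ being $1$ on $\partial\mathbb{D}$ is a useful sanity check but not the operative mechanism — the boundary conditions match because the weak (bulk-integral) formulations of ``$\partial_\nu\perp\rmTan(\cdot,\mathcal{N})$'' on $\mathbb{S}^2_+$ and on $\mathbb{D}$ are exchanged by conformal invariance of the two-dimensional Dirichlet pairing, with the radial factor $\rho^2/\rho^2=1$ in the polar decomposition of $\int_{\R^3_+}\nabla v\cdot\nabla\Phi$ separating cleanly the radial cutoff from the angular test field, exactly as your ``bookkeeping'' paragraph anticipates.
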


\begin{remark}
Note that, by Proposition \ref{profhom1/2harm} and Remark \ref{remsmoothharmcirc}, a $0$-homogeneous minimizing $1/2$-harmonic map on $\R^2$ is smooth away from the origin.  
\end{remark}

\subsection{$1/2$-harmonic circles into spheres}\label{seccirclespheres}

The goal of this subsection is to establish a crucial classification result for $1/2$-harmonic circles into spheres, a cornerstone in the proofs of both Theorem \ref{mainthm1} and Theorem \ref{mainthm2}. From now on, we restrict ourselves to the case  $\mathcal{N}=\mathbb{S}^{m-1}$ with $m\geq 3$. 
\vskip3pt

If $g:\mathbb{S}^1\to\mathbb{S}^{m-1}$ is a $1/2$-harmonic circle into $\mathbb{S}^{m-1}$ (and thus smooth), then $w_g$ defines a smooth map from the closed unit disc $\overline{\mathbb{D}}$ into the closed unit ball $\overline{B^m}$ of $\R^m$. By the maximum principle $w_g$ maps the open disc $\mathbb{D}$ into the unit open ball $B^m$, and of course $w_g(\partial \mathbb{D})\subset \mathbb{S}^{m-1}=\partial B^m$ by the boundary condition. In terms of $w_g$, the Euler-Lagrange equation for $g$ being $1/2$-harmonic writes (see e.g. \cite[Remark 4.29]{MS})
\begin{equation}\label{lund1147}
\frac{\partial w_g}{\partial \nu}\wedge w_g=0 \quad\text{on $\partial \mathbb{D}$}\,.
\end{equation}
It has been (independently) proved in \cite{BMRS,Da1,Da2,DaPig}, and \cite[Lemma 4.27 \& Remark 4.29]{MS} that $g$ being $1/2$-harmonic implies that $w_g$ is (weakly) conformal or anti-conformal, i.e., it satisfies 
$$\begin{cases}
\displaystyle \left|\frac{\partial w_g}{\partial x_1}\right|=\left|\frac{\partial w_g}{\partial x_2}\right| \\[10pt]
\displaystyle\frac{\partial w_g}{\partial x_1}\cdot \frac{\partial w_g}{\partial x_2}=0
\end{cases}\quad\text{in $\mathbb{D}$}\,.$$
In addition, $|\nabla w_g|$ does not vanish near $\partial \mathbb{D}$ whenever $g$ is not constant (by the Hopf boundary lemma applied to $|w_g|^2$, see e.g. \cite[Proof of Theorem 2.7]{Da2})\footnote{\noindent One can also prove that $\partial_\nu w_g$ does not vanish on $\partial\mathbb{D}$ as follows. Using \eqref{idenergcircl} and (constrained) outer variations of $\mathcal{E}(\cdot,\mathbb{S}^1)$ at $g$, we can argue as in \cite[Remark 4.3]{MS} to derive the equation
$$\frac{\partial w_g}{\partial\nu}(x)=\left(\frac{\gamma_1}{2}\int_{\mathbb{S}^1} \frac{|g(x)-g(y)|^2}{|x-y|^2}\de y\right)\,g(x) \quad \text{for $x\in\mathbb{S}^1$}\,.$$
Then, assuming by contradiction that $\partial_\nu w_g$ vanishes at some point $x_0\in\mathbb{S}^1$, this equation implies that $g$ is equal to the constant $g(x_0)$ (since $|g|=1$).}.  
As a consequence, if $g$ is not constant, then $w_g$ is a {\sl (branched) minimal immersion} of the unit disc up to the boundary (with branched points only in the interior), and the boundary condition \eqref{lund1147} tells us that $w_g(\overline{\mathbb{D}})$ meets $\partial B^m$ orthogonally. For $m=3$, a celebrated result of J.C.C. Nitsche \cite{Ni} says that $w_g(\overline{\mathbb{D}})$ has to be the intersection of $\overline{B^3}$ with a plane through the origin. This result has been  extended recently to arbitrary dimensions $m\geq 3$ in \cite[Theorem 2.1]{FS}. 
In conclusion, if $g:\mathbb{S}^1\to\mathbb{S}^{m-1}$ is a non constant $1/2$-harmonic circle, then $g(\mathbb{S}^1)$ is an equatorial circle of $\mathbb{S}^{m-1}$. By invariance of the energy under rotations on the image, we can assume that such $1/2$-harmonic map $g$ takes values into $\R^2\times\{0\}^{m-2}\subset  \R^m$,  so that it takes the form $g=(\widehat g,0)$ where $\widehat g:\mathbb{S}^1\to \mathbb{S}^1$ is a non constant $1/2$-harmonic circle.  On the other hand,  the classification of all $1/2$-harmonic circles into $\mathbb{S}^1$ has been obtained in  \cite{BMRS,Da1,Da2,MS}:  they are given by finite Blaschke products (see also \cite{MirPis} for a preliminary result where Blaschke products were first identified). The result can be stated as follows.

\begin{theorem}\label{classifdemicirc}
A map $\widehat g:\mathbb{S}^1\to \mathbb{S}^1$ is a non constant $1/2$-harmonic circle if and only if there exist an integer $d\geq 1$, $\theta\in[0,2\pi]$, and $\alpha_1,\ldots, \alpha_d \in\mathbb{D}$ such that $w_{\widehat g}$ or its complex conjugate equals
$$z\mapsto e^{i\theta}\prod_{k=1}^d \frac{z-\alpha_k}{1-\bar \alpha_k z}\,.$$
In particular, $\mathcal{E}(\widehat g,\mathbb{S}^1)=\pi d$.
\end{theorem}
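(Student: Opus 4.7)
The plan is to leverage the fact, recalled just before the statement, that for any $1/2$-harmonic circle $\widehat g:\mathbb{S}^1\to\mathbb{S}^1$ the harmonic extension $w_{\widehat g}$ is (weakly) conformal or anti-conformal on $\mathbb{D}$. Since $w_{\widehat g}$ is smooth on $\overline{\mathbb{D}}$ (by smoothness of $\widehat g$, see Remark~\ref{remsmoothharmcirc}) and harmonic in $\mathbb{D}$, conformality (resp.\ anti-conformality) is equivalent to the Cauchy--Riemann (resp.\ anti-Cauchy--Riemann) equations, so $w_{\widehat g}$ is holomorphic or anti-holomorphic in $\mathbb{D}$. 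I may assume the former, since the latter case is handled by passing to the complex conjugate (yielding the second alternative in the statement). Combined with $|\widehat g|=1$ on $\partial\mathbb{D}$ and non-constancy of $\widehat g$, the strong maximum principle applied to the subharmonic function $|w_{\widehat g}|^2$ gives $|w_{\widehat g}|<1$ throughout $\mathbb{D}$.

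Next I would identify $w_{\widehat g}$ as a finite Blaschke product via a classical complex-analytic argument. Because $w_{\widehat g}$ is continuous on $\overline{\mathbb{D}}$ with $|w_{\widehat g}|=1$ on $\partial\mathbb{D}$, its zeros cannot accumulate at the boundary, so by the identity principle $w_{\widehat g}$ has only finitely many zeros $\alpha_1,\ldots,\alpha_d\in\mathbb{D}$ (counted with multiplicity), with $d\geq 1$ since otherwise $w_{\widehat g}$ would be constant. Setting
$$F(z):=w_{\widehat g}(z)\prod_{k=1}^d \frac{1-\bar\alpha_k z}{z-\alpha_k}\,,$$
the quotient $F$ is holomorphic and non-vanishing on $\mathbb{D}$, continuous on $\overline{\mathbb{D}}$, and of modulus $1$ on $\partial\mathbb{D}$ (each Blaschke factor being unimodular there). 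Applying the maximum principle to both $F$ and $1/F$ forces $|F|\equiv 1$ in $\mathbb{D}$, whence $F\equiv e^{i\theta}$ for some $\theta\in[0,2\pi]$, yielding the asserted Blaschke representation.

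For the converse, given a finite Blaschke product $B$ of degree $d\geq 1$, I set $\widehat g:=B|_{\mathbb{S}^1}$. Since $B$ extends holomorphically to a neighborhood of $\overline{\mathbb{D}}$ (its poles $1/\bar\alpha_k$ lying outside), one has $w_{\widehat g}=B$. To verify the Euler--Lagrange condition~\eqref{lund1147}, I differentiate $|B|^2=1$ tangentially on $\partial\mathbb{D}$: writing $z=e^{i\theta}$ and using $\partial_\theta B=izB'(z)$, one obtains $\mathrm{Im}\bigl(zB'(z)\,\overline{B(z)}\bigr)=0$. For a holomorphic $f$, the outer normal derivative on $\partial\mathbb{D}$ equals $zf'(z)$, and the $2$D wedge identifies with $u\wedge v=\mathrm{Im}(\bar u\,v)$, so this is exactly $\partial_\nu w_{\widehat g}\wedge w_{\widehat g}=0$ pointwise; smoothness of $\widehat g$ then yields the weak formulation. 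The energy formula follows from~\eqref{idenergcircl} and $|\nabla f|^2=2|f'|^2$ for holomorphic $f$:
$$\mathcal{E}(\widehat g,\mathbb{S}^1)=\int_{\mathbb{D}}|B'(z)|^2\,\de x\,,$$
which by the area formula equals the area of $B(\mathbb{D})$ counted with multiplicity. As a Blaschke product of degree $d$ is a proper holomorphic self-map of $\mathbb{D}$ of topological degree $d$, almost every point of $\mathbb{D}$ has exactly $d$ preimages, so the integral equals $d\pi$.

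The principal obstacle in the whole scheme is really the reduction to the holomorphic/anti-holomorphic alternative, i.e., the weak conformality of $w_{\widehat g}$ for a $1/2$-harmonic circle; this is an independent result, already established in the literature cited before the statement, and it is taken as input. Once granted, the remainder is a clean combination of the strong maximum principle with the classical Blaschke product structure theorem for inner functions continuous up to the boundary.
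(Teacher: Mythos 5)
The paper does not prove this statement; it records it as Theorem~\ref{classifdemicirc} with citations to \cite{BMRS,Da1,Da2,MS} (and \cite{MirPis}), having already listed the ingredients --- smoothness of $\widehat g$, weak conformality of $w_{\widehat g}$, and $|w_{\widehat g}|<1$ in $\mathbb{D}$ --- which you correctly take as input. Your reconstruction is essentially right, but one step is asserted rather than argued: the passage from \emph{weakly conformal} to \emph{holomorphic or anti-holomorphic}. The paper only records the unoriented system $|\partial_1 w_{\widehat g}|=|\partial_2 w_{\widehat g}|$, $\partial_1 w_{\widehat g}\cdot\partial_2 w_{\widehat g}=0$ in $\mathbb{D}$; pointwise this says $Dw_{\widehat g}$ either vanishes or is a similarity of some orientation, and does not by itself fix the orientation globally. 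Your sentence ``conformality is equivalent to the Cauchy--Riemann equations'' treats the oriented notion and elides exactly this. The dichotomy you need uses harmonicity and connectedness: since $w_{\widehat g}$ is $\C$-valued and harmonic on the simply connected $\mathbb{D}$, write $w_{\widehat g}=f+\bar g$ with $f,g$ holomorphic; a direct computation turns the two conformality equations into the single identity $f'g'\equiv 0$, and holomorphy of $f'g'$ on the connected disc forces $f'\equiv 0$ (anti-holomorphic case) or $g'\equiv 0$ (holomorphic case). This is the vanishing of the Hopf differential. With that supplied, everything else you write --- the double maximum-principle argument giving $|F|\equiv 1$ and hence $F\equiv e^{i\theta}$, the finiteness and non-emptiness of the zero set, the converse verification of $\partial_\nu w_{\widehat g}\wedge w_{\widehat g}=0$ on $\partial\mathbb{D}$ via $\mathrm{Im}(zB'\overline{B})=0$, and the area-formula computation of $\int_{\mathbb{D}}|B'|^2=\pi d$ --- is correct.
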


Gathering the above results, we may now state the following corollary.

\begin{corollary}\label{classifthm}
Assume that $m\geq 3$. If $g:\mathbb{S}^1\to \mathbb{S}^{m-1}$ is a non constant $1/2$-harmonic circle, then $g(\mathbb{S}^1)$ is an equatorial  circle  of $\mathbb{S}^{m-1}$, and $\mathcal{E}(g,\mathbb{S}^1)= \pi d$  with $d=|{\rm deg}(g)|\in\mathbb{N}\setminus\{0\}$. 
\end{corollary}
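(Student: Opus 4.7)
The plan is to assemble the corollary from the ingredients already set up in the preceding discussion. First, since $g$ is a non constant $1/2$-harmonic circle, Remark~\ref{remsmoothharmcirc} gives $g\in C^\infty(\mathbb{S}^1)$, so the harmonic extension $w_g$ is smooth up to the boundary and, by the maximum principle applied to $|w_g|^2$, maps $\overline{\mathbb{D}}$ into $\overline{B^m}$ with $w_g(\partial\mathbb{D})\subset \mathbb{S}^{m-1}$. The Euler--Lagrange equation \eqref{lund1147} together with the (anti-)conformality of $w_g$ recalled from \cite{BMRS,Da1,Da2,MS}, and the non-vanishing of $|\nabla w_g|$ near $\partial\mathbb{D}$ (again recalled above, using the Hopf lemma), yield that $w_g(\overline{\mathbb{D}})$ is the image of a branched minimal immersion meeting $\partial B^m$ orthogonally along $\partial\mathbb{D}$, i.e.\ a minimal disc with free boundary in $\mathbb{S}^{m-1}$.

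The first key step is then to invoke the Fraser--Schoen theorem \cite[Theorem~2.1]{FS}, which extends Nitsche's classical result to arbitrary $m\geq 3$ and asserts that such a free boundary minimal disc must be the intersection of $\overline{B^m}$ with a $2$-plane through the origin. This immediately forces $g(\mathbb{S}^1)=w_g(\partial\mathbb{D})$ to be an equatorial circle of $\mathbb{S}^{m-1}$, which is the first conclusion of the corollary.

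Next, using the invariance of $\mathcal{E}(\cdot,\mathbb{S}^1)$ under orthogonal transformations of the target, I would pick $A\in O(m)$ sending the plane containing $g(\mathbb{S}^1)$ to $\R^2\times\{0\}^{m-2}$, and replace $g$ by $Ag=(\widehat g,0)$ where $\widehat g:\mathbb{S}^1\to\mathbb{S}^1$ inherits the $1/2$-harmonic circle property from $g$ (since the equation is $O(m)$-equivariant and the last $m-2$ coordinates are trivial). Theorem~\ref{classifdemicirc} then identifies $\widehat g$ with a finite Blaschke product of some order $d\geq 1$ (or its conjugate), giving $\mathcal{E}(\widehat g,\mathbb{S}^1)=\pi d$ and $|\deg(\widehat g)|=d$. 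Since $\mathcal{E}(g,\mathbb{S}^1)=\mathcal{E}(\widehat g,\mathbb{S}^1)$ and $|\deg(g)|=|\deg(\widehat g)|=d$ (the degree being invariant under the embedding into the equator), the formula $\mathcal{E}(g,\mathbb{S}^1)=\pi d$ with $d=|\deg(g)|$ follows.

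No step is truly hard here, as the corollary is essentially a compilation: the content lies in Fraser--Schoen and in Theorem~\ref{classifdemicirc}. The only point requiring minor care is the reduction to an $\mathbb{S}^1$-valued map: one must verify that after an orthogonal change of coordinates the resulting map indeed solves the $1/2$-harmonic circle equation into $\mathbb{S}^1$, and that the degree is preserved by the planar embedding. Both are straightforward, so I expect no genuine obstacle.
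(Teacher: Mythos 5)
Your proposal is correct and follows the paper's argument essentially verbatim: the paper itself presents Corollary~\ref{classifthm} as a direct compilation of the discussion preceding it (smoothness of $g$, conformality and non-degeneracy of $w_g$, Fraser--Schoen's rigidity for free boundary minimal discs, rotation to $\R^2\times\{0\}^{m-2}$, and Theorem~\ref{classifdemicirc}). The two points you flag for ``minor care'' — that the reduction to an $\mathbb{S}^1$-valued map preserves the $1/2$-harmonic circle property and the degree — are indeed the only bookkeeping steps, and you handle them correctly.
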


\subsection{Proof of Theorem \ref{mainthm1}}

We are now ready to prove Theorem~\ref{mainthm1}. According to Corollary~\ref{cordimreduc}, it is enough to prove Proposition~\ref{trivtangmap} below.

\begin{proposition}\label{trivtangmap}
Assume that $m\geq 3$. If $u_0\in H^{1/2}_{\rm loc}(\R^2;\mathbb{S}^{m-1})$ is a $0$-homogeneous minimizing $1/2$-harmonic map, then $u_0$ is constant. 
\end{proposition}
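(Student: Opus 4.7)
The plan is to reduce to a second variation argument along a direction orthogonal to the image of $u_0$, in the spirit of the Schoen-Uhlenbeck destabilization recalled in the introduction. First, I would combine Proposition \ref{profhom1/2harm} and Remark \ref{remsmoothharmcirc} to write $u_0(x)=g(x/|x|)$ for a smooth $1/2$-harmonic circle $g:\mathbb{S}^1\to\mathbb{S}^{m-1}$, and argue by contradiction assuming $u_0$ non constant. Corollary \ref{classifthm} then forces $g(\mathbb{S}^1)$ to be an equatorial circle of $\mathbb{S}^{m-1}$; since minimality is preserved under orthogonal transformations of the target, I may assume $g(\mathbb{S}^1)\subset\mathbb{S}^1\times\{0\}^{m-2}$. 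The hypothesis $m\geq 3$ then provides a unit vector $e:=(0,\ldots,0,1)\in\R^m$ with $u_0(x)\cdot e=0$ for every $x\in\R^2\setminus\{0\}$.

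The core step is to test the minimality of $u_0$ in $B_R$ against the comparison family
$$u_t(x):=\cos(t\phi(x))\,u_0(x)+\sin(t\phi(x))\,e,\qquad t\in\R,\;\phi\in C^\infty_c(B_R;\R).$$
Since $u_0\cdot e\equiv 0$, one has $|u_t|\equiv 1$, $u_t\in\widehat H^{1/2}(B_R;\mathbb{S}^{m-1})$ and $u_t\equiv u_0$ outside $B_R$. Using $|u_t(x)-u_t(y)|^2=2(1-u_t(x)\cdot u_t(y))$ and expanding $\cos(t\phi),\sin(t\phi)$ to second order (the first-order term vanishes by the pointwise orthogonality $u_0\perp e$), a direct computation gives
\begin{equation*}
\frac{d^2}{dt^2}\Big|_{t=0}\mathcal{E}(u_t,B_R)=\frac{\gamma_2}{2}\iint_{\R^2\times\R^2}\frac{(\phi(x)-\phi(y))^2-\tfrac12(\phi(x)^2+\phi(y)^2)|u_0(x)-u_0(y)|^2}{|x-y|^{3}}\,dx\,dy,
\end{equation*}
and minimality forces this quantity to be $\geq 0$. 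Symmetrizing the second term, this rewrites as the Hardy-type stability inequality
\begin{equation*}
\iint_{\R^2\times\R^2}\frac{(\phi(x)-\phi(y))^2}{|x-y|^3}\,dx\,dy\ \geq\ \int_{\R^2}\phi^2\,L\,dx,\qquad L(x):=\int_{\R^2}\frac{|u_0(x)-u_0(y)|^2}{|x-y|^3}\,dy.
\end{equation*}

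Next I would compute $L$ explicitly. Passing to polar coordinates $y=s\omega'$, exploiting the $0$-homogeneity $u_0(s\omega')=g(\omega')$ and the elementary identity
$$\int_0^\infty\frac{s\,ds}{(r^2+s^2-2rs\,\omega\cdot\omega')^{3/2}}=\frac{2}{r|\omega-\omega'|^2}$$
(via the substitution $s=r\cos\psi+r\sin\psi\tan\theta$ with $\cos\psi=\omega\cdot\omega'$), one finds $L(x)=f(\omega)/|x|$ with $f(\omega):=2\int_{\mathbb{S}^1}|g(\omega)-g(\omega')|^2/|\omega-\omega'|^2\,d\omega'$. From \eqref{defencirc} (with $\gamma_1=1/\pi$) and Theorem \ref{classifdemicirc} one obtains $\int_{\mathbb{S}^1}f\,d\omega=8\pi\,\mathcal{E}(g,\mathbb{S}^1)=8\pi^2 d$, where $d\geq 1$ is the Blaschke degree associated with $g$. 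Restricting the stability inequality to radial $\phi(x)=\psi(|x|)$ and using $\gamma_2=1/(2\pi)$ to rewrite the left-hand side as $4\pi\int\phi(-\Delta)^{1/2}\phi\,dx$ and the right-hand side as $4\pi d\int\phi^2/|x|\,dx$, the stability inequality reduces to
\begin{equation*}
\int_{\R^2}\phi\,(-\Delta)^{1/2}\phi\,dx\ \geq\ d\int_{\R^2}\frac{\phi^2}{|x|}\,dx\qquad\text{for every radial }\phi\in C^\infty_c(\R^2).
\end{equation*}

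Finally I would derive a contradiction via the sharp fractional Hardy inequality on $\R^2$ (Herbst, Beckner, Frank-Seiringer): the optimal constant $C_{2,1/2}=2\,\Gamma(3/4)^2/\Gamma(1/4)^2\approx 0.229$ is strictly less than $1$ and is attained in the limiting sense by smooth compactly supported truncations of the radial extremizer $|x|^{-1/2}$. Since $C_{2,1/2}<1\leq d$, such a near-extremizer provides a radial $\phi\in C^\infty_c(\R^2\setminus\{0\})$ violating the reduced inequality; after a scale-invariant rescaling into $B_R$ this contradicts the minimality of $u_0$ in $B_R$, so $u_0$ must be constant. The entire argument hinges on the strict inequality $C_{2,1/2}<1$: it is precisely the quantitative gap between the sharp fractional Hardy constant on $\R^2$ and the smallest admissible Blaschke degree that enables the destabilization. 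The remaining ingredients---the polar-coordinate formula for $L$ and the value $\mathcal{E}(g,\mathbb{S}^1)=\pi d$---are by comparison routine.
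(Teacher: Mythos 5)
Your proposal is correct and follows essentially the same route as the paper. The only cosmetic differences are: you use the variation $u_t=\cos(t\phi)u_0+\sin(t\phi)e$ while the paper uses the normalized perturbation $u_\varepsilon=(u_0+\varepsilon\zeta e_m)/\sqrt{1+\varepsilon^2\zeta^2}$ (both give identical $\dot u=\phi e$ and $\ddot u=-\phi^2 u_0$, hence the same second variation); and you obtain the potential $L$ by direct symmetrization of the second-order term, whereas the paper substitutes the Euler--Lagrange equation $(-\Delta)^{1/2}u_0=\bigl(\tfrac{\gamma_2}{2}\int|u_0(x)-u_0(y)|^2|x-y|^{-3}\,dy\bigr)u_0$ into $\langle(-\Delta)^{1/2}u_0,\zeta^2u_0\rangle$---these two manipulations yield the same quantity since $(u_0(x)-u_0(y))\cdot(\zeta^2(x)u_0(x)-\zeta^2(y)u_0(y))=\tfrac12(\zeta^2(x)+\zeta^2(y))|u_0(x)-u_0(y)|^2$. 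The remaining ingredients (Proposition~\ref{profhom1/2harm}, Corollary~\ref{classifthm}, the polar-coordinate integral yielding $L(x)=f(x/|x|)/|x|$, the restriction to radial test functions, the value $\mathcal{E}(g,\mathbb{S}^1)=\pi d$, and the comparison with the sharp fractional Hardy constant $2(\Gamma(3/4)/\Gamma(1/4))^2<1$) are identical to the paper's. The one point you should make explicit rather than leave implicit is the paper's remark that the optimal Hardy constant remains sharp when the inequality is restricted to radial functions (this follows from symmetric decreasing rearrangement); since your stability inequality only holds for radial $\phi$, this observation is needed to turn near-extremizers into a genuine contradiction.
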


\begin{proof}
Assume by contradiction that $u_0$ is not constant. From Proposition \ref{profhom1/2harm}, we know that 
$$u_0(x)=g\left(\frac{x}{|x|}\right)\,,$$
for some non constant $1/2$-harmonic circle $g:\mathbb{S}^1\to \mathbb{S}^{m-1}$. According to Corollary~\ref{classifthm}, $g(\mathbb{S}^1)$ is an equatorial circle of $\mathbb{S}^{m-1}$, and 
$$ \mathcal{E}(g,\mathbb{S}^1)=\pi d\quad\text{for some integer $d\geq 1$.}$$
Rotating coordinates in the image if necessary, we may assume without loss of generality that $g(\mathbb{S}^1)=\mathbb{S}^1\times \{0\}^{m-2}$. 
\vskip3pt

Let us now fix an arbitrary {\sl radial} function $\zeta\in C^\infty_c(\R^2)$, and define $\varphi(x):=\zeta(x)e_m$, where $(e_1,\ldots, e_m)$ denotes the canonical basis of $\R^m$. 
Then $\varphi\in C^\infty_c(\R^2;\R^m)$, and consider a radius $R=R(\zeta)>0$ such that ${\rm spt}(\zeta)\subset D_R$.  For $\varepsilon\in(-1,1)$, we define 
$$u_\varepsilon:=\frac{u_0+\varepsilon\varphi}{\sqrt{1+\varepsilon^2|\varphi|^2}}\,. $$
Note that $u_\varepsilon\in H_{\rm loc}^{1/2}(\R^2;\R^m)$, and since  $\varphi(x)\cdot u_0(x)=0$ for every $x\not=0$, we actually have $u_\varepsilon\in H_{\rm loc}^{1/2}(\R^2;\mathbb{S}^{m-1})$. By construction we have 
${\rm spt}(u_\varepsilon-u)\subset D_R$,  so that 
$$\mathcal{E}(u_\varepsilon,D_R)\geq \mathcal{E}(u,D_R)$$ 
for every $\eps\in(-1,1)$ by minimality of $u_0$.  Equality obviously holds at $\varepsilon=0$, and thus 
\begin{equation}\label{possecvar}
\left[\frac{\de^2}{\de\varepsilon^2}\mathcal{E}(u_\varepsilon,D_R) \right]_{\eps=0}\geq 0\,. 
\end{equation}
Straightforward computations yield
$$\dot u:=\left({\frac{\de u_\varepsilon}{\de\varepsilon}}\right)_{|\varepsilon=0}= \varphi \quad\text{and} \quad\ddot u:={\frac{\de^2 u_\varepsilon}{\de\varepsilon^2}}_{|\varepsilon=0}= -|\varphi|^2u_0\in H^{1/2}_{00}(D_R;\R^m)\,,$$
and 
\begin{multline*}
\left[\frac{\de^2}{\de\varepsilon^2}\mathcal{E}(u_\varepsilon,D_R) \right]_{\eps=0}=\frac{\gamma_2}{2}\iint_{(\R^2\times\R^2)\setminus(D^c_R\times D^c_R)} \frac{|\dot u(x)-\dot u(y)|^2}{|x-y|^3}\,\de x\de y\\
+\frac{\gamma_2}{2}\iint_{(\R^2\times\R^2)\setminus(D^c_R\times D^c_R)} \frac{(u_0(x)-u_0(y))\cdot(\ddot u(x)-\ddot u(y))}{|x-y|^3}\,\de x\de y\,.
\end{multline*}
Since $|\varphi|^2=\zeta^2$ and $\zeta$ is compactly supported in $D_R$, we obtain 
\begin{equation}\label{jeudec2213}
\left[\frac{\de^2}{\de\varepsilon^2}\mathcal{E}(u_\varepsilon,D_R) \right]_{\eps=0}=\frac{\gamma_2}{2}\iint_{\R^2\times\R^2} \frac{| \zeta(x)-\zeta(y)|^2}{|x-y|^3}\,\de x\de y-\big\langle(-\Delta)^{\frac{1}{2}}u_0,\zeta^2u_0 \big\rangle_{D_R}\,.
\end{equation}
Recalling the weak formulation of \eqref{halfharmmapeqsphere}  (or \cite[Remark 4.3]{MS}), we have
$$ (-\Delta)^{\frac{1}{2}}u_0(x)=\left(\frac{\gamma_2}{2}\int_{\R^2}\frac{|u_0(x)-u_0(y)|^2}{|x-y|^3}\,\de y\right)u_0(x)\quad\text{in $H^{-1/2}(D_R)$}\,.$$ 
Using  the above equation in \eqref{jeudec2213} and the fact that $|u_0|=1$, we deduce from \eqref{possecvar} that 
\begin{equation}\label{preineq}
\iint_{\R^2\times\R^2} \frac{| \zeta(x)-\zeta(y)|^2}{|x-y|^3}\,\de x\de y\geq \int_{\R^2} \left(\int_{\R^2}\frac{|u_0(x)-u_0(y)|^2}{|x-y|^2}\,\de y\right) \zeta^2(x)\,\de x\,. 
\end{equation}
Computing the right hand side of this inequality  in polar coordinates leads to (recall that $\zeta$ is assumed to be radial, i.e., $\zeta(x)=\zeta(|x|)$)
\begin{multline*}
\int_{\R^2} \left(\int_{\R^2}\frac{|u_0(x)-u_0(y)|^2}{|x-y|^2}\,\de y\right) \zeta^2(x)\,\de x\\
=\int_0^\infty \zeta^2(r)\left(\iint_{\mathbb{S}^1\times\mathbb{S}^1}|g(\sigma_1)-g(\sigma_2)|^2\left[\int_0^\infty\frac{\rho}{|\sigma_1-\rho\sigma_2|^3}\,\de\rho\right]\, \de\sigma_1\de\sigma_2\right)\,\de r\,.
\end{multline*}
By formula \cite[GW (213)(5b) p. 326]{Grad}, one has  
\begin{multline*}
\int_0^\infty\frac{\rho}{|\sigma_1-\rho\sigma_2|^3}\,\de\rho=\int_0^\infty\frac{\rho}{(1-2\rho\sigma_1\cdot\sigma_2+\rho^2)^{3/2}}\,\de\rho\\
=\frac{1}{1-\sigma_1\cdot\sigma_2}=\frac{2}{|\sigma_1-\sigma_2|^2} \quad \forall \sigma_1\not=\sigma_2\,.
\end{multline*}
Therefore,  
$$ \int_{\R^2} \left(\int_{\R^2}\frac{|u_0(x)-u_0(y)|^2}{|x-y|^2}\,\de y\right) \zeta^2(x)\,\de x
=\frac{8}{\gamma_1} \mathcal{E}(g,\mathbb{S}^1)\int_0^\infty\zeta^2(r)\,\de r=4\pi d \int_{\R^2}\frac{\zeta^2}{|x|}\,\de x\,,$$
and we conclude from \eqref{preineq} that
\begin{equation}\label{condhardy}
\iint_{\R^2\times\R^2} \frac{| \zeta(x)-\zeta(y)|^2}{|x-y|^3}\,\de x\de y\geq 4\pi d \int_{\R^2}\frac{\zeta^2}{|x|}\,\de x\,.
\end{equation}
In view of the arbitrariness of $\zeta$, we conclude that \eqref{condhardy} holds for every radial function $\zeta\in C^\infty_c(\R^2)$. On the other hand,  Hardy's inequality in $H^{1/2}(\R^2)$ (see e.g. \cite{FSe,Herb}) says that 
\begin{equation}\label{opthardy}
 \iint_{\R^2\times\R^2} \frac{| \zeta(x)-\zeta(y)|^2}{|x-y|^3}\,\de x\de y\geq C_\sharp \int_{\R^2}\frac{\zeta^2}{|x|}\,\de x\quad  \forall \zeta\in C^\infty_c(\R^2)\,,
 \end{equation}
with optimal constant 
$$C_\sharp:=8\pi\left(\frac{\Gamma(3/4)}{\Gamma(1/4)}\right)^2\,.$$
Moreover, the constant $C_\sharp$ is still sharp when restricting \eqref{opthardy} to radial functions (by symmetric decreasing rearrangement, see e.g. \cite{FS}). In view of \eqref{condhardy}, we finally deduce that 
$$4\pi d\leq C_\sharp\,, $$
that is $d\leq 2\left(\frac{\Gamma(3/4)}{\Gamma(1/4)}\right)^2<1$, a contradiction.
\end{proof}

\section{Minimizing $1/2$-harmonic maps into the circle} \label{newsection}     

The aim of this section is now to prove Theorem \ref{mainthm2} and Theorem \ref{mainthm3}. We thus assume that $n=m=2$. In the first subsection, we recall the construction and properties of the distributional Jacobian in $H^{1/2}$-spaces (see \cite{BBM,Riv} or \cite{MilPi}). In the spirit of~\cite{BCL}, the distributional Jacobian appears to be the main tool to derive energy lower bounds, and in particular to prove the minimality of $\frac{x}{|x|}$, see Section \ref{subsecminimxmodx}. The uniqueness part of Theorem \ref{mainthm2} is proved in Subsection \ref{uniqxmodx}. It relies on Theorem~\ref{classifdemicirc} and  subtle constructions of competitors, again in the spirit of \cite{BCL}. Compared to  \cite{BCL}, the argument is more intricate as it requires a preliminary construction (see Lemma~\ref{lemmodw}) and the numerical evaluation of certain integrals. The last subsection is devoted to the proof of Theorem~\ref{mainthm3}. The proof here is more  classical and it is essentially based on Theorem~\ref{mainthm2}.

\subsection{The distributional Jacobian}

For a map $g\in H^{1/2}(\partial B_1^+;\mathbb{R}^2)$, we define a distribution $T(g)\in ({\rm Lip}(\partial B_1^+))^\prime$ in the following way. Consider $u\in H^1(B_1^+;\mathbb{R}^2)$ such that $u=g$ on $\partial B_1^+$, and set 
$$H(u):=2(\partial_2u\wedge\partial_3 u,\partial_3 u\wedge\partial_1 u,\partial_1u\wedge\partial_2 u) \in L^1(B_1^+;\mathbb{R}^3) \,,$$
where $\wedge$ denotes the wedge product on $\R^2$ (i.e., $a\wedge b:={\rm det}(a,b)$ for $a,b\in\R^2$). 
\vskip3pt

For  a scalar function $\varphi\in {\rm Lip}(\partial B_1^+)$ and an arbitrary extension $\Phi\in {\rm Lip}(B_1^+)$ of $\varphi$ to the closed half ball $\overline{B_1^+}$, we define the action of $T(g)$ on $\varphi$ by setting 
$$\langle T(g),\varphi\rangle:=\int_{B_1^+}H(u)\cdot \nabla\Phi\,\de {\bf x}\,. $$
Noticing that 
$${\rm div}\,H(u)=0\quad\text{in $\mathscr{D}^\prime(B_1^+)$}\,,$$ 
it is routine to check that $T(g)$ is well defined, i.e., it does not depend on the extensions $u$ and $\Phi$, see e.g. \cite[Lemma 3]{BBM}. In addition, the mapping $T:g\mapsto T(g)$ is continuous, see \cite[Lemma 9]{BBM}. 

\begin{lemma}\label{contT}
The mapping $T:H^{1/2}(\partial B_1^+;\mathbb{R}^2)\to ({\rm Lip}(\partial B_1^+))^\prime$ is strongly continuous. More precisely, there exists a constant $C$ such that 
$$\big|\langle T(g_1)-T(g_2),\varphi \rangle\big|\leq C\Big([g_1]_{H^{1/2}(\partial B_1^+)}+ [g_2]_{H^{1/2}(\partial B_1^+)}\Big)[g_1-g_2]_{H^{1/2}(\partial B_1^+)}[\varphi]_{\rm lip}$$
for every $g_1,g_2\in H^{1/2}(\partial B_1^+;\mathbb{R}^2)$ and $\varphi\in {\rm Lip}(\partial B_1^+)$, where 
$$[g]^2_{H^{1/2}(\partial B_1^+)}:=\iint_{\partial B_1^+\times \partial B_1^+} \frac{|g({\bf x})-g({\bf y})|^2}{|{\bf x}-{\bf y}|^3}\,\de\mathcal{H}^2_{\bf x}\de\mathcal{H}^2_{\bf y}$$
and
$$[\varphi]_{\rm lip}:=\sup_{\mathop{{\bf x},{\bf y}\in\partial B_1^+}\limits_{{\bf x}\neq{\bf y}}} \frac{|\varphi({\bf x})-\varphi({\bf y})|}{|{\bf x}-{\bf y}|}\,.$$
\end{lemma}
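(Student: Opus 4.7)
The plan is to reduce everything to an $L^1$ estimate on $H(u_1)-H(u_2)$ for suitably chosen extensions of $g_1,g_2$, and to exploit the fact that $H$ is quadratic in $\nabla u$.

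First, since $\partial B_1^+$ is a Lipschitz manifold, there is a bounded linear extension operator from $H^{1/2}(\partial B_1^+;\R^2)$ into $H^1(B_1^+;\R^2)$: given $g_i\in H^{1/2}(\partial B_1^+;\R^2)$ ($i=1,2$), I would pick $u_i\in H^1(B_1^+;\R^2)$ (for instance a harmonic extension) with $u_i=g_i$ on $\partial B_1^+$ and
\[
\|\nabla u_i\|_{L^2(B_1^+)}\leq C[g_i]_{H^{1/2}(\partial B_1^+)},\qquad
\|\nabla(u_1-u_2)\|_{L^2(B_1^+)}\leq C[g_1-g_2]_{H^{1/2}(\partial B_1^+)}.
\]
Similarly, by the McShane/Whitney extension, I may extend $\varphi\in\mathrm{Lip}(\partial B_1^+)$ to some $\Phi\in\mathrm{Lip}(\overline{B_1^+})$ with $\|\nabla\Phi\|_{L^\infty(B_1^+)}\leq C[\varphi]_{\mathrm{lip}}$. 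By the well-definedness of $T$ established before the lemma,
\[
\langle T(g_1)-T(g_2),\varphi\rangle=\int_{B_1^+}\bigl(H(u_1)-H(u_2)\bigr)\cdot\nabla\Phi\,\de\mathbf{x}.
\]

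Next, each component of $H(u)$ is a wedge product of two partial derivatives of $u$, hence bilinear in $\nabla u$. I would use the telescoping identity
\[
\partial_j u_1\wedge\partial_k u_1-\partial_j u_2\wedge\partial_k u_2=\partial_j(u_1-u_2)\wedge\partial_k u_1+\partial_j u_2\wedge\partial_k(u_1-u_2),
\]
which yields the pointwise bound
\[
|H(u_1)-H(u_2)|\leq C\bigl(|\nabla u_1|+|\nabla u_2|\bigr)|\nabla(u_1-u_2)|\quad\text{a.e.\ in }B_1^+.
\]
Plugging this into the integral, pulling out $\|\nabla\Phi\|_{L^\infty}$, and applying the Cauchy--Schwarz inequality gives
\[
|\langle T(g_1)-T(g_2),\varphi\rangle|\leq C\bigl(\|\nabla u_1\|_{L^2}+\|\nabla u_2\|_{L^2}\bigr)\|\nabla(u_1-u_2)\|_{L^2}[\varphi]_{\mathrm{lip}}.
\]
Combining with the three norm estimates above produces the claimed inequality.

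The main (very mild) obstacle is only the choice of extensions: one must verify that a single linear extension operator controls both $g_i$ individually and their difference $g_1-g_2$ by the corresponding $H^{1/2}$ seminorms. This is immediate from linearity once one uses, e.g., the harmonic extension to $B_1^+$ (combined with a classical trace/extension theorem on the Lipschitz domain $B_1^+$), so no technical subtlety arises. Everything else is algebraic manipulation of the wedge product and Cauchy--Schwarz.
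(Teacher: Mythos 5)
Your argument is correct and is essentially the standard proof; the paper itself does not supply a proof but simply cites \cite[Lemma~9]{BBM}, and what you have written is a faithful reconstruction of that argument: choose a bounded linear extension of the $g_i$ and a Lipschitz extension of $\varphi$, telescope the bilinear expression $H(u_1)-H(u_2)$, and close with Cauchy--Schwarz.

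One small point deserves more care than you give it. You need $\|\nabla u_i\|_{L^2(B_1^+)}\leq C[g_i]_{H^{1/2}(\partial B_1^+)}$ with the \emph{seminorm} on the right, whereas the trace/extension theorem on a Lipschitz domain a priori gives control only by the full norm $\|g_i\|_{H^{1/2}}$. Linearity alone does not fix this. What does fix it is that (i) $H(u)$ is unchanged when a constant is subtracted from $u$, and (ii) the harmonic extension (or any extension operator taking constants to constants) satisfies $\nabla E(g)=\nabla E(g-c)$ for every constant $c$, so
\[
\|\nabla E(g)\|_{L^2(B_1^+)}\leq C\inf_{c}\|g-c\|_{H^{1/2}(\partial B_1^+)}\leq C[g]_{H^{1/2}(\partial B_1^+)}
\]
by Poincar\'e--Wirtinger on the connected Lipschitz surface $\partial B_1^+$. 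The same remark applies to the difference $u_1-u_2$ via linearity of $E$. With this one line added, the proof is complete and coincides with the argument the paper refers to.
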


We shall make use of the following explicit representation of $T(g)$ for maps $g$ belonging to the following class of partially regular maps
\begin{multline*}
{\mathscr R}:=\Big\{g\in H^{1/2}(\partial B_1^+;\mathbb{R}^2) : g_{|\mathbb{D}\times\{0\}}\in W^{1,1}(\mathbb{D};\mathbb{S}^1)\,,\text{ $g$ is smooth on $\overline{\partial^+B_1}$,}\\
\text{smooth in a neighborhood of $\partial \mathbb{D}\times\{0\}$,}\\
\text{and smooth away from finitely many points in $\mathbb{D}\times\{0\}$}\Big\}\,.
\end{multline*}
For a map $g\in \mathscr{R}$ and $a\in\mathbb{D}$ a singular point of $g_{|\mathbb{D}\times\{0\}}: \mathbb{D}\to \mathbb{S}^1$, we shall denote by ${\rm deg}(g,a)$ the topological degree of  $g$ restricted to any small circle around $a$ (oriented in the counterclockwise sense).  We have the following representation of $T(g)$ for $g$ in the class $\mathscr{R}$.

\begin{proposition}\label{calcjac}
Let $g\in{\mathscr R}$ be such that $g\in C^\infty\big((\overline{\mathbb{D}}\times\{0\})\setminus\{a_1,\ldots,a_K\}\big)$ for some distinct points  $a_1,\ldots,a_K\in\mathbb{D}\times\{0\}$. If $d_i:={\rm deg}(g,a_i)$, then 
\begin{equation}\label{repjac}
\langle T(g),\varphi\rangle=2\int_{\partial^+B_1}{\rm det}(\nabla_\tau g)\varphi\,\de\mathcal{H}^2 -2\pi\sum_{i=1}^Kd_i\varphi(a_i)\quad \forall\varphi\in{\rm Lip}(\partial B_1^+)\,,
\end{equation}
where $\nabla_\tau g$ denotes the tangential gradient\footnote{For $x\in\partial^+B_1$ and $\tau_1,\tau_2\in{\rm Tan}(x,\partial^+B)$ such that $(\tau_1,\tau_2,x)$ is a direct orthonormal basis of $\R^3$, we have 
$\nabla_\tau g(x):=(\partial_{\tau_1}g(x),\partial_{\tau_2}g(x))$, and ${\rm det}(\nabla_\tau g(x))$ does not depend on the choice of $\tau_1$ and $\tau_2$.} of $g$ on $\partial^+B_1$. 
\end{proposition}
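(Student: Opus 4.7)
The plan is to apply Stokes's theorem on $B_1^+$ after excising small half-balls around each $a_i$ and then identify the contribution of each boundary piece. Pick an extension $u \in H^1(B_1^+;\mathbb R^2)$ of $g$ that is smooth on $\overline{B_1^+}\setminus\{a_1,\ldots,a_K\}$ — the harmonic extension works by elliptic regularity — and let $\Phi\in\mathrm{Lip}(\overline{B_1^+})$ be any Lipschitz extension of $\varphi$. The definition of $H(u)$ exhibits the null Lagrangian structure
\[
\tfrac12 H(u)\cdot\nabla\Phi\,d\mathbf{x}\,=\,du_1\wedge du_2\wedge d\Phi\,=\,d\bigl(\Phi\,du_1\wedge du_2\bigr),
\]
so that on $\Omega_\varepsilon:=B_1^+\setminus\bigcup_i \overline{B_\varepsilon^+(a_i)}$, where $u$ is smooth, Stokes yields
\[
\int_{\Omega_\varepsilon}H(u)\cdot\nabla\Phi\,d\mathbf{x}\,=\,2\int_{\partial\Omega_\varepsilon}\Phi\,du_1\wedge du_2.
\]

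The boundary $\partial\Omega_\varepsilon$ has three pieces, which I plan to analyze in turn. On $\partial^+B_1$ the map $u=g$ is smooth and the pull-back of $du_1\wedge du_2$ equals $\det(\nabla_\tau g)\,d\mathcal H^2$ with the orientation convention of the footnote, directly producing the term $2\int_{\partial^+B_1}\det(\nabla_\tau g)\,\varphi\,d\mathcal H^2$. On the truncated flat face $(\mathbb{D}\setminus\bigcup_i\overline{D_\varepsilon(a_i)})\times\{0\}$, the tangential gradient of $u=g$ takes values in $T_g\mathbb S^1$ and thus has rank at most one, so the pull-back of $du_1\wedge du_2$ vanishes identically. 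On each remaining inward-oriented small hemisphere $\partial^+B_\varepsilon(a_i)$ I use the second null-Lagrangian rewriting $du_1\wedge du_2=\tfrac12\,d(u_1\,du_2-u_2\,du_1)$ and apply Stokes once more to reduce the surface integral to a line integral on the equator:
\[
2\int_{\partial^+B_\varepsilon(a_i)}\Phi\,du_1\wedge du_2\,=\,-\int_{\partial D_\varepsilon(a_i)}\Phi\,(g_1\,dg_2-g_2\,dg_1)\,+\,R_\varepsilon(a_i),
\]
where the minus sign encodes the clockwise induced orientation of $\partial D_\varepsilon(a_i)$ and $R_\varepsilon(a_i)$ is a surface remainder controlled by $\|\nabla\Phi\|_\infty\|u\|_\infty\|\nabla u\|_{L^2(\partial^+B_\varepsilon(a_i))}|\partial^+B_\varepsilon(a_i)|^{1/2}$. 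As $\varepsilon\to 0$ along a suitable subsequence chosen by Fubini so that $\varepsilon^2\int_{\partial^+B_\varepsilon(a_i)}|\nabla u|^2\,d\mathcal H^2\to 0$, we have $R_\varepsilon(a_i)\to 0$, $\Phi$ converges uniformly to $\varphi(a_i)$ on $\partial D_\varepsilon(a_i)$, and $(2\pi)^{-1}\int_{\partial D_\varepsilon(a_i)}(g_1\,dg_2-g_2\,dg_1)\to d_i$ by the very definition of topological degree, producing the claimed $-2\pi d_i\varphi(a_i)$. The left-hand side converges to $\int_{B_1^+}H(u)\cdot\nabla\Phi\,d\mathbf{x}$ by dominated convergence since $H(u)\in L^1(B_1^+)$.

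The main obstacle I anticipate is the orientation bookkeeping — in particular the sign coming from the inward-pointing normal on the small hemispheres — together with the justification that $R_\varepsilon(a_i)\to 0$. The latter is not pointwise in $\varepsilon$ but holds along a Fubini-selected sequence $\varepsilon_k\to 0$, which is enough since both sides of \eqref{repjac} are independent of $\varepsilon$. Should this direct analysis prove cumbersome, an alternative is to approximate $g$ by smooth $\mathbb R^2$-valued maps $g_k\to g$ in $H^{1/2}(\partial B_1^+)$ retaining the local structure of the singularities, establish the identity on the smooth approximants (where the Dirac masses at the $a_i$'s emerge as concentrations of the $g_k$-Jacobian), and conclude using the strong continuity of $T$ stated in Lemma~\ref{contT}.
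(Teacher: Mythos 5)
Your excision-and-Stokes argument is correct and leads to the same identity, but it is organized differently from the paper's proof, and it is worth noting what each route requires. The paper never excises half-balls in the half-space: it first mollifies the extension $u$ to smooth maps $u_k$ on all of $\overline{B_1^+}$ (with $u_k=u$ near $\partial^+B_1$, $u_k\to u$ in $H^1$, and ${u_k}_{|\mathbb{D}}\to g_{|\mathbb{D}}$ in $W^{1,1}$), then applies the divergence theorem once on the whole $B_1^+$ to obtain the intermediate identity \eqref{prerep}, expressing $\langle T(g),\varphi\rangle$ as a $\partial^+B_1$ term plus two planar integrals over $\mathbb{D}$ and $\partial\mathbb{D}$. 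The Dirac masses are then produced entirely in two dimensions: one approximates $\varphi$ by Lipschitz functions $\varphi_k$ that are \emph{locally constant} near each $a_i$, so that when one excises small discs in $\mathbb{D}$ the cut-off surface term vanishes identically (there is no remainder to estimate), and the degree formula on $\partial D_{\varepsilon_k}(a_i)$ gives the $-2\pi\sum d_i\varphi_k(a_i)$ contribution on the nose. Your approach instead keeps $\varphi$ fixed and excises in three dimensions, which is perfectly fine but forces you to control the surface remainder $R_\varepsilon(a_i)=\int_{\partial^+B_\varepsilon(a_i)}d\Phi\wedge(u_1\,du_2-u_2\,du_1)$, and you handle this correctly via the Cauchy--Schwarz bound and a Fubini choice of radii; since the identity you want does not depend on $\varepsilon$, a good subsequence is indeed enough. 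The trade-off is essentially: the paper trades the slicing argument for an extra mollification of the extension and a more careful choice of test functions, while you pay with the Fubini selection but get to work with a single fixed extension. A few points you should make explicit in a polished write-up: (i) the Gauss--Green identity on $\Omega_\varepsilon$ with $\Phi$ merely Lipschitz needs a one-line justification (mollify $\Phi$, pass to the limit using $H(u)\in C^0(\overline{\Omega_\varepsilon})$ — this is exactly where the paper's choice to work with smooth $u_k$ pays off, since it manipulates ${\rm div}\,H(u_k)=0$ classically); (ii) the Fubini subsequence must be chosen to work simultaneously for all $K$ singular points, which is harmless but should be said; and (iii) the degree integral $\int_{\partial D_\varepsilon(a_i)}(g_1\,dg_2-g_2\,dg_1)=2\pi d_i$ holds for \emph{every} small $\varepsilon$ (not just in the limit), which is what you implicitly use together with $\Phi\to\varphi(a_i)$ uniformly.
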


\begin{proof}
By the smoothness assumption on $g$, we may find an extension $u$ of $g$ which is smooth in $\overline{B_1^+}\setminus \{a_1,\ldots,a_K\}$.  We first claim that 
\begin{equation}\label{prerep}
\langle T(g),\varphi\rangle=2\int_{\partial^+B_1}{\rm det}(\nabla_\tau g)\varphi\,\de\mathcal{H}^2+\int_{\mathbb{D}}(g\wedge\nabla g)\cdot\nabla^\perp\varphi\,\de x-\int_{\partial \mathbb{D}} (g\wedge\partial_\tau g)\varphi\,\de\mathcal{H}^1\,,
\end{equation}
where $\nabla:=(\partial_{x_1},\partial_{x_2})$ and $\nabla^\perp:=(-\partial_{x_1},\partial_{x_2})$ on $\mathbb{D}$, and $\partial_\tau$ denotes the tangential derivation on $\partial\mathbb{D}$ (oriented in the counterclockwise sense).  
Smoothing $u$ near the $a_i's$, we can find a sequence $(u_k)$ of smooth maps over $\overline B_1^+$ such that $u_k=u$ in a neighborhood of $\partial^+B_1$,  $u_k\to u$ strongly in $H^1(B^+)$, and ${u_k}_{|\mathbb{D}\times\{0\}}\to g_{|\mathbb{D}\times\{0\}}$ strongly in $W^{1,1}(\mathbb{D})$ with $\|{u_k}_{|\mathbb{D}\times\{0\}}\|_{L^\infty(\mathbb{D})}\leq 1$. In particular, given an extension $\Phi\in{\rm Lip}(B_1^+)$ of $\varphi$, we have
\begin{equation}\label{approxjac}
 \int_{B_1^+}H(u_k)\cdot \nabla\Phi\,\de{\bf x}\mathop{\longrightarrow}\limits_{k\to\infty} \langle T(g),\varphi\rangle\,.
 \end{equation}
Since ${\rm div}\,H(u_k)=0$, by the divergence theorem we have 
\begin{align}
\nonumber \int_{B_1^+}H(u_k)\cdot \nabla\Phi\,\de{\bf x}&= \int_{\partial^+B_1} {\bf x}\cdot H(u_k)\varphi\,\de\mathcal{H}^2-2\int_{\mathbb{D}}(\partial_1u_k\wedge\partial_2u_k)\varphi\,\de x\\
\label{dim1144} &= 2 \int_{\partial^+B_1}{\rm det}(\nabla_\tau g)\varphi\,\de\mathcal{H}^2-2\int_{\mathbb{D}}(\partial_1u_k\wedge\partial_2u_k)\varphi\,\de x\,.
\end{align}
Noticing that $2\partial_1u_k\wedge\partial_2u_k={\rm curl}(u_k\wedge\nabla u_k)$, a further integration by parts yields
\begin{align}
\nonumber 2\int_{\mathbb{D}}(\partial_1u_k\wedge\partial_2u_k)\varphi\,dx&=-\int_{\mathbb{D}}(u_k\wedge\nabla u_k)\cdot\nabla^\perp\varphi\,\de x+\int_{\partial\mathbb{D}}(u_k\wedge\partial_\tau u_k)\varphi\,\de\mathcal{H}^1\\
\label{dim1146} &= -\int_{\mathbb{D}}(u_k\wedge\nabla u_k)\cdot\nabla^\perp\varphi\,\de x+\int_{\partial\mathbb{D}}(g\wedge\partial_\tau g)\varphi\,\de\mathcal{H}^1\,.
\end{align}
Gathering \eqref{approxjac}-\eqref{dim1144}-\eqref{dim1146} and letting $k\to \infty$ now leads to \eqref{prerep} by dominated convergence. 

To prove \eqref{repjac}, it is now enough to show that 
\begin{equation}\label{dim1156}
\int_{\mathbb{D}}(g\wedge\nabla g)\cdot\nabla^\perp\varphi\,\de x=\int_{\partial \mathbb{D}} (g\wedge\partial_\tau g)\varphi\,\de \mathcal{H}^1-2\pi\sum_{i=1}^K\varphi(a_i)\,.
\end{equation}
To this purpose we consider a sequence $(\varphi_k)$ of Lipschitz functions over $\overline{\mathbb{D}}$ such that $\varphi_k$ is constant in a neighborhood of each $a_i$,  $\varphi_k\to\varphi$ uniformly on $\overline{\mathbb{D}}$, and $\nabla \varphi_k\rightharpoonup\nabla\varphi$ weakly* in $L^\infty(\mathbb{D})$. In this way, 
$$\int_{\mathbb{D}}(g\wedge\nabla g)\cdot\nabla^\perp\varphi_k\,\de x \mathop{\longrightarrow}\limits_{k\to\infty} \int_{\mathbb{D}}(g\wedge\nabla g)\cdot\nabla^\perp\varphi\,\de x \,. $$
Given $k$, we consider $\varepsilon_k>0$ small enough in such a way that $D_{2\varepsilon_k}(a_i)\cap D_{2\varepsilon_k}(a_j)=\emptyset $ for $i\not= j$, $D_{2\varepsilon_k}(a_i)\cap \partial \mathbb{D}=\emptyset$ for each $i$, and $\varphi_k=\varphi_k(a_i)$ in $D_{\varepsilon_k}(a_i)$. Then, 
\begin{align}
\nonumber\int_{\mathbb{D}}(g\wedge\nabla g)\cdot\nabla^\perp\varphi_k\,\de x=&\int_{\mathbb{D}\setminus \bigcup_{i=1}^KD_{\varepsilon_k}(a_i)}(g\wedge\nabla g)\cdot\nabla^\perp\varphi_k\,\de x\\
\nonumber=&-2\int_{\mathbb{D}\setminus \bigcup_{i=1}^KD_{\varepsilon_k}(a_i)}(\partial_1g\wedge\partial_2g)\varphi_k\,\de x+\int_{\partial \mathbb{D}} (g\wedge\partial_\tau g)\varphi_k\,\de\mathcal{H}^1\\
\nonumber&-\sum_{i=1}^K\varphi_k(a_i)\int_{\partial D_{\varepsilon_k}(a_i)}(g\wedge\partial_\tau g)\,\de\mathcal{H}^1\\
\label{dim1158} =& \int_{\partial \mathbb{D}} (g\wedge\partial_\tau g)\varphi_k\,\de\mathcal{H}^1-2\pi \sum_{i=1}^Kd_i \varphi_k(a_i)\,.
\end{align}
In the last identity, we have used the fact that $\partial_1g\wedge\partial_2g={\rm det}(\nabla g)=0$ in the region $\mathbb{D}\setminus \bigcup_{i=1}^KD_{\varepsilon_h}(a_i)$, since $g$ is $\mathbb{S}^1$-valued and smooth in that region. Letting $k\to\infty$ in \eqref{dim1158}  finally leads to \eqref{dim1156}. 
\end{proof}

\subsection{Proof of Theorem \ref{mainthm2}, part 1.}\label{subsecminimxmodx}
 By Theorem \ref{equivmin}, to prove the minimality of $u_\star(x):=\frac{x}{|x|}$, it is enough to prove that its harmonic extension is minimizing, and this is the way we proceed. First, we need to compute explicitly its harmonic extension. 
To this purpose, it is useful to consider the inverse stereographic projection $\mathfrak{S}:\overline{\mathbb{D}}\to \overline{\mathbb{S}^2_+}$ given by 
\begin{equation}\label{invsterproj}
\mathfrak{S}(x):=\left(\frac{2x_1}{1+|x|^2},\frac{2x_2}{1+|x|^2},\frac{1-|x|^2}{1+|x|^2}\right)\,, 
\end{equation}
and its inverse $\mathfrak{S}^{-1}:\overline{\mathbb{S}^2_+}\to \overline{\mathbb{D}}$ (which is the stereographic projection from the south pole): 
\begin{equation}\label{sterproj}
\mathfrak{S}^{-1}({\bf x})=\left(\frac{x_1}{1+x_3}, \frac{x_2}{1+x_3}\right)= \frac{x_1+i x_2}{1+x_3}\,.
\end{equation}
Let us recall that $\mathfrak{S}$ is a conformal transformation.

\begin{lemma}\label{formextxmodx}
The harmonic extension of the map $u_\star(x):=x/|x|$ is given by 
$$ u_\star^{\e}({\bf x})=\frac{x}{|{\bf x}|+x_3}\,.$$
\end{lemma}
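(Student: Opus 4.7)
The plan is to verify directly that $v(\mathbf{x}) := x/(|\mathbf{x}|+x_3)$ coincides with the Poisson extension $u_\star^{\e}$ by checking that it is bounded in $\R^3_+$, harmonic there, and attains the boundary trace $x/|x|$ on $\R^2 \setminus \{0\}$. The identification $v = u_\star^{\e}$ then follows from the same Liouville-type reasoning already used in Remark \ref{rembd0hom}: the difference is a bounded harmonic function in $\R^3_+$ that is continuous up to $(\R^2\setminus\{0\})\times\{0\}$ with vanishing trace there, hence by Schwarz reflection across the hyperplane (after removing the isolated point $0$, which is removable since the extension is bounded) it extends to a bounded harmonic function on $\R^3$ vanishing on $\R^2$, and must therefore be identically zero.

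Boundedness and the boundary trace of $v$ are immediate: $|v(\mathbf{x})| \leq |x|/|\mathbf{x}| \leq 1$, and $|\mathbf{x}| \to |x|$ as $x_3 \to 0^+$ whenever $x \neq 0$. The heart of the argument is harmonicity, and the plan is to handle it by introducing the auxiliary scalar potential $h(\mathbf{x}) := \log(|\mathbf{x}|+x_3)$, so that $v_i = x_i\, e^{-h}$ for $i=1,2$. A short direct differentiation yields
$$\partial_i h = \frac{x_i}{|\mathbf{x}|(|\mathbf{x}|+x_3)} \quad (i=1,2), \qquad \partial_3 h = \frac{1}{|\mathbf{x}|},$$
from which one reads off $|\nabla h|^2 = 2/\bigl[|\mathbf{x}|(|\mathbf{x}|+x_3)\bigr]$ and, after combining the horizontal and vertical second derivatives (which will cleanly cancel), $\Delta h = 0$. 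These formulas deliver for free the key pointwise identity $x_i\,|\nabla h|^2 = 2\partial_i h$ for $i=1,2$.

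The rest is algebra: using $\Delta x_i = 0$ together with the harmonicity of $h$,
$$\Delta v_i = 2\partial_i(e^{-h}) + x_i\,\Delta(e^{-h}) = -2 e^{-h}\partial_i h + x_i\,e^{-h}\bigl(|\nabla h|^2 - \Delta h\bigr) = e^{-h}\bigl(x_i\,|\nabla h|^2 - 2\partial_i h\bigr) = 0,$$
which completes the verification of harmonicity.

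I do not expect any serious obstacle. The only mildly delicate point is the uniqueness step, because the boundary datum $u_\star$ is discontinuous at the origin; but boundedness of $v$ and of $u_\star^{\e}$ on $\R^3_+$, together with classical Schwarz reflection and the removable singularity principle for bounded harmonic functions, reduces the matter to precisely the Liouville-type argument already invoked in Remark \ref{rembd0hom}, so no new tool is required.
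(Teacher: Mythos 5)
Your proof is correct, and it takes a genuinely different route from the paper. The paper exploits the $0$-homogeneity of $u_\star^\e$ together with the conformal invariance of the Laplace--Beltrami operator on $\mathbb{S}^2$ under the stereographic projection $\mathfrak{S}$: restricting the $0$-homogeneous harmonic function $u_\star^\e$ to $\mathbb{S}^2_+$ and pulling back through $\mathfrak{S}$ reduces the problem to identifying the harmonic extension of the identity map on $\partial\mathbb{D}$ to $\mathbb{D}$, which is trivially the identity; pushing forward and rescaling by homogeneity then yields the formula. You instead verify the formula directly, which is also clean: the auxiliary potential $h(\mathbf{x})=\log(|\mathbf{x}|+x_3)$ is harmonic in $\R^3_+$ (the identity $x_1^2+x_2^2=(|\mathbf{x}|-x_3)(|\mathbf{x}|+x_3)$ makes both $|\nabla h|^2$ and $\Delta h$ collapse algebraically), and the key pointwise identity $x_i|\nabla h|^2=2\,\partial_i h$ for $i=1,2$ immediately delivers $\Delta(x_i e^{-h})=0$. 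Your uniqueness step is a standard Schwarz reflection and Liouville argument and is handled correctly; note only that it does use the classical fact that the Poisson extension is continuous up to the boundary wherever the boundary datum is continuous, so that the difference $v-u_\star^\e$ has zero continuous trace on $(\R^2\setminus\{0\})\times\{0\}$. Each approach has its merits: the paper's derivation \emph{produces} the formula via conformal geometry (and the same lemma about stereographic projection is reused later), while yours is more elementary and self-contained, requiring only direct differentiation and standard uniqueness principles.
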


\begin{proof}
Since $u_\star$ is $0$-homogeneous, its  harmonic extension $u_\star^\e$ is also $0$-homogeneous. Being harmonic in $\R^{3}_+$, it satisfies 
\begin{equation}\label{eqonS2+}
\begin{cases}
\Delta_{\mathbb{S}^{2}}u_\star^\e=0 & \text{on $\mathbb{S}^{2}_+$}\,,\\[3pt]
u_\star^\e= u_\star & \text{on $\partial \mathbb{S}^{2}_+=\mathbb{S}^1\times\{0\}$}\,,
\end{cases}
\end{equation}
where $\Delta_{\mathbb{S}^{2}}$ denotes the Laplace-Beltrami operator on $\mathbb{S}^2$. Next we define $w:\overline{\mathbb{D}}\to \mathbb{R}^2$ by setting
$$w(x):=u_\star^\e\big(\mathfrak{S}(x)\big)\,, $$
where $\mathfrak{S}$ is the inverse stereographic projection from the closed unit disc into $\overline{\mathbb{S}^{2}_+}$ defined in \eqref{invsterproj}. Since  $\mathfrak{S}$ is conformal, and $\mathfrak{S}(x)=(x,0)$ for $x\in\partial \mathbb{D}$, we infer from \eqref{eqonS2+} that 
$$\begin{cases} 
\Delta w=0 & \text{in $\mathbb{D}$}\,,\\
w(x)=x & \text{on $\partial\mathbb{D}$}\,.
\end{cases}$$
By uniqueness of the harmonic extension, we deduce that $w(x)=x$ for every $x\in \mathbb{D}$, and consequently
$$u_\star^\e({\bf x})= \mathfrak{S}^{-1}({\bf x})=\frac{x}{1+x_3}\quad\text{for every ${\bf x}=(x,x_3)\in \mathbb{S}^2_+$}\,.$$
The conclusion follows by $0$-homogeneity of $u_\star^\e$. 
\end{proof}

In what follows, we keep the notation $u_\star(x):=x/|x|$. In the following lemma, we provide an approximation result to reduce the class of of competitors (to test the minimality of $u_\star$) to the ones belonging to the class $\mathscr{R}$. 

\begin{lemma}\label{smoothappH1/2}
Let $u\in H^{1/2}(\mathbb{D};\mathbb{S}^1)$ be such that $u=u_\star$ in a neighborhood of $\partial\mathbb{D}$. There exists a sequence $(u_k)\subset H^{1/2}(\mathbb{D};\mathbb{S}^1)\cap W^{1,1}(\mathbb{D})$ such that $u_k=u_\star$ in a neighborhood of $\partial\mathbb{D}$,  $u_k$ is smooth away from finitely many points, and $u_k\to u$ strongly in $H^{1/2}(\mathbb{D})$. 
\end{lemma}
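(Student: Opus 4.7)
The plan is a three-stage construction: smoothing of $u$ via its harmonic extension to $\R^3_+$, radial projection onto $\mathbb{S}^1$ after a generic shift to guarantee finitely many singularities, and a gluing step near $\partial\mathbb{D}$ to restore the boundary condition.

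First, I would extend $u$ to $\tilde u\in H^{1/2}_{\mathrm{loc}}(\R^2;\mathbb{S}^1)$ by setting $\tilde u:=u_\star$ on $\R^2\setminus\mathbb{D}$. Since $u=u_\star$ on $\mathbb{D}\setminus\overline{D_\rho}$ for some $\rho<1$, the extension is admissible and $\tilde u-u_\star$ has support in $\overline{D_\rho}$. Let $U:=\tilde u^{\e}$ be the harmonic extension to $\R^3_+$ and set $U_t(x):=U(x,t)$. Then $|U_t|\leq 1$ by the maximum principle, $U_t\to\tilde u$ strongly in $H^{1/2}_{\mathrm{loc}}(\R^2)$ as $t\to 0^+$, and since $\tilde u$ is smooth on $\R^2\setminus\overline{D_\rho}$, standard interior regularity for harmonic functions yields $U_t\to u_\star$ smoothly on compact subsets of $\overline{\mathbb{D}}\setminus\overline{D_\rho}$.

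Next I would project $U_t$ onto $\mathbb{S}^1$. By Sard's theorem applied to the smooth map $U_t|_\mathbb{D}:\mathbb{D}\to\overline{B^2_1}$, almost every $c\in\R^2$ is a regular value, so one may pick $c_t$ with $|c_t|\to 0$ that is a regular value of $U_t$. Then $U_t^{-1}(c_t)\cap\mathbb{D}=\{a_{t,1},\ldots,a_{t,N_t}\}$ is finite, and by Step~1 all these points lie in $D_\rho$ for $t$ small. Setting $w_t:=(U_t-c_t)/|U_t-c_t|$ on $\mathbb{D}\setminus U_t^{-1}(c_t)$, each $a_{t,i}$ is a regular zero of $U_t-c_t$, so $w_t(x)$ is locally of the form $A_i(x-a_{t,i})/|A_i(x-a_{t,i})|$ for an invertible matrix $A_i$. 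This gives $w_t\in W^{1,p}_{\mathrm{loc}}(\mathbb{D})$ for every $p<2$, hence $w_t\in W^{1,1}(\mathbb{D};\mathbb{S}^1)\cap H^{1/2}(\mathbb{D};\mathbb{S}^1)$, smooth on $\overline{\mathbb{D}}\setminus\{a_{t,1},\ldots,a_{t,N_t}\}$.

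To restore the boundary condition, I would glue $w_t$ to $u_\star$ on an annulus $\overline{D_{\rho'}}\setminus D_\rho$ with $\rho<\rho'<1$ where both maps are close and smooth. On that annulus, $w_t\bar u_\star$ is smooth, $\mathbb{S}^1$-valued, and has degree $0$ along any simple loop (since $w_t$ and $u_\star$ both have degree~$1$ there by Step~1), so it admits a smooth lift $w_t\bar u_\star=e^{i\varphi_t}$ with $\varphi_t\to 0$ smoothly. Choosing a radial cutoff $\chi\in C^\infty_c(D_{\rho''})$ with $\chi\equiv 1$ on $D_\rho$ (where $\rho'<\rho''<1$), I define $u_k:=w_t$ on $D_\rho$, $u_k:=u_\star e^{i\chi\varphi_t}$ on $D_{\rho''}\setminus D_\rho$, and $u_k:=u_\star$ on $\mathbb{D}\setminus\overline{D_{\rho''}}$. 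Selecting $t=t_k\to 0$ (and $c_{t_k}\to 0$ accordingly) gives the desired sequence.

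The main obstacle will be the $H^{1/2}$ strong convergence $w_t\to\tilde u$, and hence $u_k\to u$: the projection $v\mapsto v/|v|$ is not globally Lipschitz in $H^{1/2}$, so the contribution from pairs $(x,y)$ near the zeros requires a direct estimate. I would split the double integral defining $[w_t-\tilde u]^2_{H^{1/2}(\mathbb{D})}$ into two parts: on the region where $|U_t-c_t|$ is bounded below, the projection is uniformly Lipschitz and convergence follows from $U_t\to\tilde u$ in $H^{1/2}$; on shrinking neighborhoods of the $a_{t,i}$, I would use the explicit local form $A_i(x-a_{t,i})/|A_i(x-a_{t,i})|$ together with $|w_t|=|\tilde u|=1$ and the finiteness of the $H^{1/2}$-seminorm of $x/|x|$ on a disc to control the contribution uniformly as $t\to 0$.
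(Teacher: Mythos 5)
Your overall strategy --- regularize via the harmonic extension slices $U_t$, shift by a Sard-generic $c_t$ and project onto $\mathbb{S}^1$, then glue back to $u_\star$ near $\partial\mathbb{D}$ --- is a direct construction that in spirit re-proves the density result the paper invokes. The paper's proof is much shorter: it writes $u=u_\star w$ with $w:=\bar u_\star\, u\in H^{1/2}(\mathbb{D};\mathbb{S}^1)$ (complex multiplication, using that $H^{1/2}\cap L^\infty$ is a Banach algebra), observes that $w\equiv 1$ near $\partial\mathbb{D}$, extends $w$ by $1$ to $\R^2$, applies the density theorem of \cite[Proof of Theorem~2.16]{MilPi} to obtain $w_k$ smooth away from finitely many points with $w_k\to w$ strongly in $H^{1/2}_{\rm loc}$ and $w_k\equiv 1$ near $\partial\mathbb{D}$, and finally sets $u_k:=u_\star w_k$. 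In other words, the paper reduces the lemma to a citation, whereas you are attempting to redo the cited theorem from scratch; your gluing step is the analogue of the paper's multiplication by $u_\star$.

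The genuine gap in your proposal is the strong $H^{1/2}$ convergence $w_t\to\tilde u$, which you rightly flag as the main obstacle, but whose resolution you sketch does not close. Choosing $c_t$ merely as a Sard-regular value gives finiteness of $U_t^{-1}(c_t)$, but it gives no quantitative control on $[w_t]_{H^{1/2}}$: for a generic regular value $c$ the set $\{|U_t-c|<\delta\}$ need not be a union of small discs around the $a_{t,i}$, the number $N_t$ of zeros and the matrices $A_i$ are uncontrolled, and the model $A_i(x-a_{t,i})/|A_i(x-a_{t,i})|$ is valid only on neighborhoods whose radius you cannot bound from below. Consequently the proposed split of the double integral into a "good" region where $|U_t-c_t|\geq\delta$ and "shrinking neighborhoods" of the zeros does not yield a uniform energy bound, let alone strong convergence. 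The missing ingredient --- which is exactly what \cite{MilPi}, \cite{BBM} and \cite{Riv} use at this point --- is a Fubini-type averaging over the shift: one estimates $\dashint_{|c|<\eta}\big[(U_t-c)/|U_t-c|\big]^2_{H^{1/2}}\,\de c$ in terms of $[U_t]^2_{H^{1/2}}$, which permits one to \emph{select} a good shift $c_t$ (not just a regular value) for which the projected energies are controlled, and then upgrade to strong convergence. Without this selection device the key convergence claim is unsubstantiated. (A minor, fixable point in the same step: even on the good region the double integral also has cross terms pairing a good $x$ with a bad $y$; these need to be estimated separately, which is routine but should be said.)
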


\begin{proof}
Identifying $\R^2$ with the complex plane $\mathbb{C}$,  we recall that both $H^{1/2}(\mathbb{D};\mathbb{C})\cap L^\infty(\mathbb{D})$ and $W^{1,1}(\mathbb{D};\mathbb{C})\cap L^\infty(\mathbb{D})$ are Banach algebras. If $\bar u_\star$ denotes the complex conjugate of $u_\star$, the map $w:=\bar u_\star u$ belongs to $H^{1/2}(\mathbb{D};\mathbb{S}^1)\cap W^{1,1}(\mathbb{D})$,  and it is identically equal to one in a neighborhood of $\partial \mathbb{D}$. Extending $w$ by the value one outside $\mathbb{D}$, we can apply the method in \cite[Proof of Theorem 2.16]{MilPi} to produce a sequence $(w_k)\subset H_{\rm loc}^{1/2}(\R^2;\mathbb{S}^1)\cap W^{1,1}_{\rm loc}(\R^2)$ such that $w_k$ is smooth outside a finite subset of $\R^2$, and 
$w_k\to w$ strongly in $H_{\rm loc}^{1/2}(\R^2)$. Using that $w$ equals one near $\partial \mathbb{D}$, a quick inspection of the construction (which is based on a convolution argument with a sequence of mollifiers) shows that $w_k$ is also equal to one near $\partial \mathbb{D}$ (at least for $k$ large enough). Therefore, setting $u_k:=u_\star w_k$, we have 
$u_k\in H^{1/2}(\mathbb{D};\mathbb{S}^1)\cap W^{1,1}(\mathbb{D})$, $u_k$ is equal to $u_\star$ near $\partial \mathbb{D}$, $u_k$ is smooth away from a finite set, and $u_k\to u_\star$ strongly in $H^{1/2}(\mathbb{D})$. 
\end{proof}

We shall need the following theorem which is a slight generalization of \cite[Theorem~7.5]{BCL}. Since the proof follows closely \cite{BCL}  with only minor modifications, we shall omit it.  

\begin{theorem}[\cite{BCL}]\label{thmBCL}
Let $(\mathscr{M},\boldsymbol{\delta})$ be a compact metric space, and $\mu$ a nonnegative Radon measure on $\mathscr{M}$ satisfying $\mu(\mathscr{M})=1$. Given a closed subset $A\subset \mathscr{M}$, $N\geq 1$ distinct points $a_1,\ldots, a_N\subset A$, and $d_1,\ldots,d_N\in\mathbb{Z}$ satisfying $\sum_i d_i=1$, define for $\nu:=\sum_i d_i\delta_{a_i}$,   
$$\mathbf{I}(\nu):= \sup\Big\{\int_{\mathscr{M}}\varphi\,\de\mu-\int_{\mathscr{M}} \varphi\,\de\nu: \varphi\in {\rm Lip}(\mathscr{M})\,,\;[\varphi]_{\rm lip}\leq 1\Big\}\,,$$
with $[\varphi]_{\rm lip}:=\sup_{x\neq y}\frac{|\varphi(x)-\varphi(y)|}{\boldsymbol{\delta(x,y)}}$. Then, 
$${\bf I }(\nu)\geq \min_{c\in A} \int_{\mathscr{M}} \boldsymbol{\delta}(x,c)\,\de\mu_x\,. $$
\end{theorem}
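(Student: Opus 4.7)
The plan is to follow the strategy of Brezis--Coron--Lieb in \cite{BCL}, which establishes the analogous inequality when the continuous measure $\mu$ is replaced by a second integer-weighted Dirac sum. The present statement is a ``continuous'' refinement of theirs; accordingly I would adapt their construction to the setting of a general probability measure $\mu\in \mathcal{M}_{+}(\mathscr{M})$ with $\mu(\mathscr{M})=1$.

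The quantity $\mathbf{I}(\nu)$ is a Kantorovich--Rubinstein distance between $\mu$ and the signed measure $\nu$, both of total mass one. The proof therefore naturally takes the form of exhibiting an explicit $1$-Lipschitz competitor $\varphi^\star$ with
$$\int_{\mathscr{M}}\varphi^\star\,\de\mu -\int_{\mathscr{M}}\varphi^\star\,\de\nu \;\geq\; \min_{c\in A}\int_{\mathscr{M}}\boldsymbol{\delta}(x,c)\,\de\mu(x).$$
First, by compactness of $A$ (which is closed in the compact space $\mathscr{M}$) and continuity of $c\mapsto \int_{\mathscr{M}}\boldsymbol{\delta}(x,c)\,\de\mu(x)$ (via dominated convergence, since $\boldsymbol{\delta}$ is continuous and bounded on $\mathscr{M}\times\mathscr{M}$), the minimum on the right-hand side is attained at some $c^\star\in A$.

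To construct $\varphi^\star$ I would exploit the integer constraint $d_i\in\mathbb{Z}$ with $\sum_i d_i=1$ to rewrite $\nu$ as a signed sum of unit Dirac masses $\nu=\sum_{\alpha=1}^{p}\delta_{P_\alpha}-\sum_{\beta=1}^{p-1}\delta_{N_\beta}$, where $P_\alpha,N_\beta\in A$ (with possible repetitions) and the surplus $p-(p-1)=1$ reflects $\sum d_i=1$. A minimum-cost matching $\sigma$ pairs each negative atom $N_\beta$ with one of the positive atoms $P_{\sigma(\beta)}$, minimizing $\sum_\beta \boldsymbol{\delta}(P_{\sigma(\beta)},N_\beta)$; the single unmatched positive atom plays the role of the ``net'' unit of charge and is to be connected, via an admissible Lipschitz route, with $c^\star$. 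The test function $\varphi^\star$ is then obtained as the McShane--Whitney $1$-Lipschitz extension to $\mathscr{M}$ of suitable values prescribed on the finite set $\{P_\alpha,N_\beta,c^\star\}$: the values are chosen so that the contributions at matched pairs cancel telescopically in $\int\varphi^\star\,\de\nu$, whereas $\int\varphi^\star\,\de\mu$ is controlled from below by $\int_{\mathscr{M}}\boldsymbol{\delta}(x,c^\star)\,\de\mu(x)$.

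The main technical obstacle is verifying that the prescribed values on the finite set $\{P_\alpha,N_\beta,c^\star\}$ are indeed $1$-Lipschitz there (before applying McShane--Whitney), which reduces to combining the triangle inequality in $(\mathscr{M},\boldsymbol{\delta})$ with the optimality of $\sigma$. This combinatorial verification is essentially the one carried out in the proof of Theorem~7.5 of \cite{BCL} for the purely discrete case; since the adaptation to the continuous $\mu$ requires only routine modifications (the integral against $\mu$ replaces a finite sum), I would follow the authors in omitting the full computation.
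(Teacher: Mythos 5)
Since the paper deliberately omits the proof (citing \cite{BCL}), there is no internal argument to compare against; the question is whether your sketch is in itself a viable blueprint. I don't think it is, and the gap is not the one you identify.

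You place the "main technical obstacle" in checking that the prescribed values $\varphi^\star(P_\alpha),\varphi^\star(N_\beta),\varphi^\star(c^\star)$ are $1$-Lipschitz on the finite set, and you treat everything else as routine. But the real difficulty is elsewhere. A McShane--Whitney extension of values prescribed on a \emph{finite} set gives you control of $\varphi^\star$ only at those finitely many points and a global Lipschitz bound; it gives no pointwise lower bound on $\varphi^\star$ over $\mathrm{spt}\,\mu$, and hence no mechanism to guarantee
$\int_{\mathscr{M}}\varphi^\star\,\de\mu \ge \int_{\mathscr{M}}\boldsymbol{\delta}(x,c^\star)\,\de\mu$,
which you simply assert. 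With the normalization $\varphi^\star(c^\star)=0$ one has $|\varphi^\star|\le \boldsymbol{\delta}(\cdot,c^\star)$, which bounds $\int\varphi^\star\,\de\mu$ from \emph{above} by $L$, not from below. Conversely, if you try to force $\varphi^\star\ge\boldsymbol{\delta}(\cdot,c^\star)$ on $\mathrm{spt}\,\mu$ (for instance by taking a max with $\boldsymbol{\delta}(\cdot,c^\star)$), you generically destroy the telescoping cancellation $\varphi^\star(N_\beta)=\varphi^\star(P_{\sigma(\beta)})$ and the sign condition on $\int\varphi^\star\,\de\nu$, since $\varphi^\star(P_\alpha)\ge\boldsymbol{\delta}(P_\alpha,c^\star)>0$ would then be forced at every positive atom. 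Your Step~6 therefore asks for two constraints that pull in opposite directions, and the sketch offers no reconciliation.

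A concrete illustration that the naive choices fail: take $\mathscr M=\{-1,0,1\}\subset\mathbb R$, $A=\mathscr M$, $\mu=\tfrac12(\delta_{-1}+\delta_{1})$, $\nu=\delta_{-1}+\delta_{1}-\delta_{0}$, so $L=1$. Here any minimum-cost matching of $\{N_1\}=\{0\}$ against $\{P_1,P_2\}=\{-1,1\}$ leaves one unmatched positive atom, yet the optimizer realizing $\mathbf I(\nu)=1$ is $\varphi^\star(x)=-|x|=-\boldsymbol{\delta}(x,0)$, i.e.\ minus the distance to the \emph{negative} atom, for which $\int\varphi^\star\,\de\mu=-1<L$. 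The inequality holds only because $\int\varphi^\star\,\de\nu=-2$ is sufficiently negative, a feature that the sought lower bound on $\int\varphi^\star\,\de\mu$ would not capture. This shows that the construction of the competitor must depend on $\mu$ in a way that a McShane--Whitney extension of atom-values cannot encode. I would also be wary of the claim that passing from a discrete to a general Radon $\mu$ is "routine": the combinatorial matching arguments in \cite{BCL} exploit the integer structure on both sides of the pairing, and a diffuse $\mu$ is genuinely a different regime (the reduction is by approximation, not by replacing a sum with an integral verbatim). In short, the decomposition of $\nu$ and the Kantorovich--Rubinstein framing are the right opening moves, but the heart of the proof --- producing a single $1$-Lipschitz $\varphi^\star$ that simultaneously makes $\int\varphi^\star\,\de\nu$ sufficiently negative and $\int\varphi^\star\,\de\mu$ sufficiently large --- is missing from your sketch.
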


\begin{proof}[Proof of Theorem \ref{mainthm2}: minimality of $u_\star$] 
By Theorem \ref{equivmin}, to prove that $u_\star$ is a $0$-homogeneous minimizing  $1/2$-harmonic map, it is enough to show that $u^\e_\star$ is a minimizing harmonic map with free boundary in every bounded admissible open set $G\subset\R^{3}_+$. In turn, it reduces to prove that   $u^\e_\star$ is a minimizing harmonic map with free boundary in $B_R^+$ for every radius $R>0$. By $0$-homogeneity of $u^\e_\star$, it is enough to show that $u^\e_\star$ is a minimizing harmonic map with free boundary in $B_1^+$. 

First, we compute using Lemma  \ref{formextxmodx}, 
\begin{equation}\label{dim1611}
 {\bf E}(u_\star^\e,B_1^+)=\int_{B_1^+}\frac{\de{\bf x}}{(|{\bf x}|+x_3)^2}=\int_{\partial^+B_1}\frac{\de\mathcal{H}^2}{(1+x_3)^2}=\pi\,.
 \end{equation}
In view of \eqref{dim1611}, it is thus enough to show that 
\begin{equation}\label{condminimxmodx}
{\bf E}(v,B_1^+)\geq \pi
\end{equation} 
for every map $v\in H^1(B_1^+;\mathbb{R}^2)$ such that $v=u_\star^\e$ in a neighborhood of $\partial^+B_1$ and $|v|=1$ on $\mathbb{D}\times\{0\}$.  

Let us consider such a map $v$. From the pointwise inequality $|\nabla v|^2\geq |H(v)|$, we first infer that 
\begin{equation}\label{dim1619}
{\bf E}(v,B_1^+)\geq \frac{1}{2}\int_{B_1^+}|H(v)|\,\de{\bf x} \,.
\end{equation}
Then, consider an arbitrary function $\varphi\in {\rm Lip}(\partial B_1^+)$ satisfying $|\varphi({\bf x})-\varphi({\bf y})|\leq |{\bf x}-{\bf y}|$ for every ${\bf x},{\bf y}\in\partial B_1^+$. By the McShane-Whitney extension theorem, we can find a $1$-Lipschitz function  $\Phi\in {\rm Lip}(B_1^+)$ such that $\Phi_{|\partial B_1^+}=\varphi$.  Since $|\nabla \Phi|\leq 1$ a.e. in $B_1^+$, we deduce from \eqref{dim1619} that 
\begin{equation}\label{lwbdengxmodx}
{\bf E}(v,B_1^+)\geq \frac{1}{2} \int_{B_1^+}H(v)\cdot \nabla\Phi\,\de{\bf x}=\frac{1}{2}\langle T(g),\varphi\rangle\,,
\end{equation}
where $g:=v_{|\partial B_1^+}\in H^{1/2}(\partial B_1^+;\R^2)$ is equal to $u_\star^\e$ in a neighborhood of $\partial^+B_1$. 

By Lemma \ref{smoothappH1/2}, we can find a sequence $(u_k)\subset H^{1/2}(\mathbb{D};\mathbb{S}^1)\cap W^{1,1}(\mathbb{D})$ such that $u_k=u_\star$ in a neighborhood of $\partial \mathbb{D}$, $u_k$ is smooth away from finitely many points in $\mathbb{D}$, and $u_k\to g_{|\mathbb{D}\times\{0\}}$ strongly in $H^{1/2}(\mathbb{D})$. Setting 
$$g_k({\bf x}):=\begin{cases}
g({\bf x}) & \text{if ${\bf x}\in\partial^+B_1$}\,,\\
u_k(x) & \text{if ${\bf x}=(x,0)\in\mathbb{D}\times\{0\}$}\,,
\end{cases}$$
we have $g_k\in {\mathscr R}$, $g_k=u_\star^\e$ in a neighborhood of $\partial^+ B_1$, and $g_k\to g$ strongly in $H^{1/2}(\partial B_1^+)$. 

Let us now fix the index $k$. Since $g_k\in\mathscr{R}$, we can find distinct points $a_1,\ldots,a_{N_k}$ in $\mathbb{D}$ such that $g_k$ is smooth away from the $a_i$'s. In addition, if $d_i:={\rm deg}(g_k,a_i)$, then 
$$\sum_{i=1 }^{N_k}d_i={\rm deg}(g_k,\partial\mathbb{D})={\rm deg}(u_\star,\partial\mathbb{D})=1\,.$$ 
Applying Proposition \ref{formextxmodx} to $g_k$ together with Lemma \ref{formextxmodx} yields
$$\frac{1}{2}\langle T(g_k),\varphi\rangle=\pi\left(\frac{1}{\pi}\int_{\partial^+B_1}\frac{\varphi}{(1+x_3)^2} \,\de\mathcal{H}^2-\sum_{i=1}^{N_k}d_i\varphi(a_i)\right)\,.$$
In turn, applying Theorem \ref{thmBCL} with $\mathscr{M}=\partial B_1^+$ endowed with the Euclidean metric, $A=\overline{\mathbb{D}}\times\{0\}$, $\mu=\frac{1}{\pi}(1+x_3)^{-2}\mathcal{H}^2\res{\partial^+B_1}$, and $\nu=\sum_i d_i\delta_{a_i}$, yields 
\begin{equation}\label{lund1024}
\sup_{[\varphi]_{\rm lip}\leq 1}\frac{1}{2}\langle T(g_k),\varphi\rangle\geq \min_{c\in\overline{\mathbb{D}}\times\{0\}}\int_{\partial^+B_1}\frac{|{\bf x}-c|}{(1+x_3)^2} \,\de\mathcal{H}^2\,.
\end{equation}
Next, observe that the minimum value above is achieved at $c=0$. Indeed, the function
$$V: z\in\overline{\mathbb{D}} \mapsto\int_{\partial^+B_1}\frac{|{\bf x}-(z,0)|}{(1+x_3)^2} \,\de\mathcal{H}^2 $$
is clearly convex, and 
$$\nabla V(0)=-\int_{\partial^+B_1} \frac{x}{(1+x_3)}\,\de\mathcal{H}^2=0\,.$$ 
Going back to \eqref{lund1024}, we have thus proved that 
\begin{equation}\label{cpabordel1517bis}
\sup_{[\varphi]_{\rm lip}\leq 1}\frac{1}{2}\langle T(g_k),\varphi\rangle\geq \int_{\partial^+B_1}\frac{1}{(1+x_3)^2} \,\de\mathcal{H}^2 =\pi\,.
\end{equation}

Now we deduce from Lemma \ref{contT} that 
\begin{equation}\label{cpabordel1517}
\sup_{[\varphi]_{\rm lip}\leq 1}\frac{1}{2}\langle T(g),\varphi\rangle\geq \sup_{[\varphi]_{\rm lip}\leq 1}\frac{1}{2}\langle T(g_k),\varphi\rangle-C[g-g_k]_{H^{1/2}(\partial B_1^+)}\,,
\end{equation}
for a constant $C$ independent of $k$. Gathering \eqref{lwbdengxmodx}, \eqref{cpabordel1517}, and \eqref{cpabordel1517bis}, we obtain 
$${\bf E}(v,B_1^+)\geq \pi -C[g-g_k]_{H^{1/2}(\partial B_1^+)}\,. $$
Letting $k\to\infty$ leads to \eqref{condminimxmodx}, which completes the proof.
\end{proof}

\subsection{Proof of Theorem \ref{mainthm2}, part 2.}\label{uniqxmodx}

The goal of this subsection is to prove that $u_\star(x)=\frac{x}{|x|}$ is the unique $0$-homogeneous $1/2$-harmonic map from $\R^2$ into $\mathbb{S}^1$, up to an orthogonal transformation. This is achieved in two steps. The first one consists in proving that $u_\star$  is the unique $0$-homogeneous $1/2$-harmonic map of degree $\pm 1$ (at the origin), up to an orthogonal transformation (see Proposition \ref{casd=1}). In the second step, we prove that a $0$-homogeneous $1/2$-harmonic map with  a degree (at the origin) different from $\pm 1$  is not minimizing (see Proposition \ref{competitorconcl}). 

\begin{lemma}\label{rep0homharmmap}
If $u_0$ is a nontrivial $0$-homogenous $1/2$-harmonic map from $\R^2$ into $\mathbb{S}^1$, then 
$$u_0^\e({\bf x})=w\circ\mathfrak{S}^{-1}\Big(\frac{{\bf x}}{|{\bf x}|}\Big)\,,$$
where $\mathfrak{S}^{-1}$ is the stereographic projection \eqref{sterproj}, and $w$ is a finite  Blaschke product or the complex conjugate of a finite  Blaschke product. In other words,  
\begin{equation}\label{formblaschkeprod}
w(z)\text{ or }\,\overline w(z)\;= e^{i\theta}\prod_{j=1}^d\frac{z-\alpha_j}{1-\overline{\alpha_j}z} 
\end{equation}
for some $\theta\in[0,2\pi[$, $d\in \mathbb{N}\setminus\{0\}$, and $\alpha_1,\ldots,\alpha_d\in\mathbb{D}$. As a consequence, 
\begin{equation}\label{quantenerg2d}
{\bf E}(u_0^\e,B_1^+)=\pi d\,. 
\end{equation}
\end{lemma}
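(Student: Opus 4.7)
The plan is to combine Proposition~\ref{profhom1/2harm} with Theorem~\ref{classifdemicirc} via the conformal stereographic projection argument already used in the proof of Lemma~\ref{formextxmodx}. First, since $u_0$ is a nontrivial $0$-homogeneous $1/2$-harmonic map, Proposition~\ref{profhom1/2harm} furnishes a non constant $1/2$-harmonic circle $g:\mathbb{S}^1 \to \mathbb{S}^1$ such that $u_0(x) = g(x/|x|)$. The harmonic extension $u_0^\e$ inherits the $0$-homogeneity of $u_0$, so it is entirely determined by its restriction $v_0 := u_0^\e|_{\mathbb{S}^2_+}$, which solves $\Delta_{\mathbb{S}^2} v_0 = 0$ on $\mathbb{S}^2_+$ with boundary condition $v_0 = g$ on $\partial \mathbb{S}^2_+ = \mathbb{S}^1 \times \{0\}$, exactly as in \eqref{eqonS2+}.

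Setting $w := v_0 \circ \mathfrak{S}: \overline{\mathbb{D}} \to \R^2$, the conformality of $\mathfrak{S}$ implies that $w$ is harmonic in $\mathbb{D}$, and since $\mathfrak{S}(x) = (x,0)$ for $x \in \partial\mathbb{D}$, the boundary values of $w$ coincide with $g$. By uniqueness of solutions to \eqref{exttodiscwg}, we thus have $w = w_g$. Theorem~\ref{classifdemicirc} now applies to $g$ and yields that $w_g$ or its complex conjugate equals a finite Blaschke product of the form \eqref{formblaschkeprod}. The stated representation $u_0^\e({\bf x}) = w \circ \mathfrak{S}^{-1}({\bf x}/|{\bf x}|)$ then follows from $v_0({\bf x}/|{\bf x}|) = w \circ \mathfrak{S}^{-1}({\bf x}/|{\bf x}|)$ and the $0$-homogeneity of $u_0^\e$.

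For the energy identity \eqref{quantenerg2d}, I would use the $0$-homogeneity of $u_0^\e$ together with spherical coordinates ${\bf x} = r\omega$, $\omega \in \mathbb{S}^2_+$, which give $|\nabla u_0^\e|^2 = r^{-2}|\nabla_{\mathbb{S}^2} v_0|^2$, and hence
$$ {\bf E}(u_0^\e, B_1^+) = \frac{1}{2}\int_0^1\!\!\int_{\mathbb{S}^2_+} \frac{|\nabla_{\mathbb{S}^2} v_0|^2}{r^2}\,r^2\,\de\mathcal{H}^2\,\de r = \frac{1}{2}\int_{\mathbb{S}^2_+} |\nabla_{\mathbb{S}^2} v_0|^2\,\de\mathcal{H}^2. $$
Since $\mathfrak{S}:\mathbb{D}\to \mathbb{S}^2_+$ is conformal and the Dirichlet energy of a map is conformally invariant in two dimensions, the last quantity equals $\tfrac{1}{2}\int_{\mathbb{D}}|\nabla w|^2\,\de x$, which in turn is $\mathcal{E}(g,\mathbb{S}^1) = \pi d$ by \eqref{idenergcircl} and Theorem~\ref{classifdemicirc}.

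The main obstacle is essentially bookkeeping: correctly identifying $w$ with the harmonic extension $w_g$ through $\mathfrak{S}$ and invoking conformal invariance. No new analytic input beyond Proposition~\ref{profhom1/2harm} and Theorem~\ref{classifdemicirc} is required, and the argument is a direct generalization of the computation carried out for $u_\star$ in Lemma~\ref{formextxmodx}.
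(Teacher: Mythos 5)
Your proof is correct and follows essentially the same route as the paper: invoke Proposition~\ref{profhom1/2harm} to obtain the $1/2$-harmonic circle $g$, use $0$-homogeneity and the conformality of $\mathfrak{S}$ to identify $u_0^\e\circ\mathfrak{S}$ with the disc-harmonic extension $w_g$ (exactly as in Lemma~\ref{formextxmodx}), apply Theorem~\ref{classifdemicirc} to get the Blaschke product form, and derive the energy identity via spherical coordinates and conformal invariance together with \eqref{idenergcircl}. The only cosmetic difference is that you spell out the radial integration explicitly, whereas the paper condenses it to a single line.
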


\begin{proof}
By Proposition \ref{profhom1/2harm}, $u_0(x)=g(\frac{x}{|x|})$ for every $x\not=0$, for some non constant $1/2$-harmonic circle $g:\mathbb{S}^1\to\mathbb{S}^1$. By Theorem \ref{classifdemicirc}, the harmonic extension $w_g$ of $g$ to the unit disc $\mathbb{D}$ (i.e., the solution of \eqref{exttodiscwg}) is of the form \eqref{formblaschkeprod}. Hence, we only have to prove that 
$u^\e_0({\bf x})=w_g\circ\mathfrak{S}^{-1}(\frac{{\bf x}}{|{\bf x}|})$. The argument is exactly as in the proof of Lemma \ref{formextxmodx}. By $0$-homogeneity, $u_0^\e$ solves 
$$\begin{cases} 
\Delta_{\mathbb{S}^2}u_0^\e =0 & \text{on $\mathbb{S}^2_+$}\,,\\[3pt]
u_0^\e({\bf x})= g & \text{on $\partial \mathbb{S}^2_+=\mathbb{S}^1\times\{0\}$}\,.
\end{cases}$$
As a consequence, $u_0^\e\circ\mathfrak{S}$ is harmonic in $\mathbb{D}$, and it equals $g$ on $\partial \mathbb{D}$. In other words,  $u_0^\e\circ\mathfrak{S}=w_g$, and \eqref{formblaschkeprod} follows.  

Next, by $0$-homogeneity of $u^\e_0$, conformal invariance, \eqref{idenergcircl}, and Theorem \ref{classifdemicirc},
$${\bf E}(u^\e_0,B_1^+)=\frac{1}{2}\int_{\partial^+B_1}|\nabla_\tau u_0^\e|^2\,\de\mathcal{H}^2=\frac{1}{2}\int_{\mathbb{D}}|\nabla w_g|^2\,\de z=\mathcal{E}(g,\mathbb{S}^1)=\pi d\,, $$
which completes the proof. 
\end{proof}

\begin{proposition}\label{casd=1}
Let $g:\mathbb{S}^1\to\mathbb{S}^1$ be a $1/2$-harmonic circle such that ${\rm deg}(g)\in\{\pm 1\}$. Assume that $u_0:=g(\frac{x}{|x|})$ is a $0$-homogeneous minimizing $1/2$-harmonic map from $\R^2$ into $\mathbb{S}^1$. Then $g$ is an orthogonal transformation, i.e., $g(x)=Ax$ for some $A\in O(2,\R)$. 
\end{proposition}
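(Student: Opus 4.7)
The starting point is Lemma \ref{rep0homharmmap}, which together with the hypothesis $\deg(g)=\pm1$ (equivalently $d=1$ in the Blaschke formula) tells us that $w_g$ or its complex conjugate is a single Blaschke factor
$$w_g(z)=e^{i\theta}\frac{z-\alpha}{1-\bar\alpha z}=e^{i\theta}\phi_\alpha(z),\qquad \alpha\in\mathbb{D},\ \theta\in[0,2\pi).$$
The plan is to reduce to $\alpha=0$, since then $g(x)=e^{i\theta}x$ (or $e^{i\theta}\bar x$ in the conjugate case), which is the restriction to $\mathbb{S}^1$ of an element of $O(2,\R)$. The case $\deg(g)=-1$ reduces to $\deg(g)=+1$ via complex conjugation on the target, since $z\mapsto\bar z$ is orthogonal and preserves minimality. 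Similarly, by precomposing with a rotation of the source and post-composing with a rotation of the target (both in $O(2,\R)$, and both preserving the class of $0$-homogeneous minimizing $1/2$-harmonic maps, cf.\ the invariance used in the proof of Proposition \ref{trivtangmap}), I may normalize to $\theta=0$ and $\alpha=a\in[0,1)$. Everything now reduces to proving $a=0$.

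The main geometric identity exploited is that, by the proof of Lemma \ref{rep0homharmmap} and Lemma \ref{formextxmodx},
$$u_{0,a}^\e(\mathbf{x})=\phi_a\!\left(u_\star^\e(\mathbf{x})\right),\qquad u_\star^\e(\mathbf{x})=\frac{x}{|\mathbf{x}|+x_3}=\mathfrak{S}^{-1}\!\Big(\tfrac{\mathbf{x}}{|\mathbf{x}|}\Big),$$
so that modifying $a$ amounts to post-composition by a Möbius automorphism of $\mathbb{D}$. By \eqref{quantenerg2d}, $\mathbf{E}(u_{0,a}^\e,B_1^+)=\pi$ for every $a$, so within the family $(u_{0,a}^\e)_a$ energy alone cannot select $a=0$: the comparison must be made against a competitor that is \emph{not} $0$-homogeneous.

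Suppose for contradiction $a>0$. I will construct such a competitor by a domain deformation on $B_1^+$. The natural device is the family of Möbius automorphisms $F_\beta$ of $\overline{\R^3_+}$ obtained as the Poincaré (hyperbolic) extensions of $\phi_\beta$, which preserve $\partial \R^3_+$, $\mathbb{S}^2_+$, the flat face $\partial^0B_1^+$, and the half-ball $B_1^+$, and which act on $\mathbb{S}^2_+$ via $\mathfrak{S}$ as $\phi_\beta$. Fix a smooth cutoff $\chi$ equal to $1$ on $B_{1/2}^+$ and vanishing near $\partial^+B_1$, tangent to $\partial^0B_1^+$ throughout, and define $\tilde F_\beta=\mathrm{id}+\chi(F_\beta-\mathrm{id})$ (a diffeomorphism for small $|\beta|$). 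The deformed map $v_\beta:=u_{0,a}^\e\circ\tilde F_\beta^{-1}$ is then a valid competitor for $u_{0,a}^\e$ in $B_1^+$: it matches $u_{0,a}^\e$ on $\partial^+B_1$ and preserves $|v_\beta|=1$ on $\partial^0B_1^+$. Minimality yields $\mathbf{E}(v_\beta,B_1^+)\geq \pi$ for every small $\beta\in\mathbb{D}$; equivalently, $\beta=0$ is a minimum of the real-valued map $\beta\mapsto\mathbf{E}(v_\beta,B_1^+)$. The first-order condition vanishes automatically because $u_{0,a}^\e$ is stationary (its stress-energy tensor is divergence-free and the free-boundary components $T_{j3}$ vanish on $\partial^0B_1^+$, as one checks from the factorization $u_{0,a}^\e=\phi_a\circ u_\star^\e$ and the conformality of $\phi_a$, which reduces everything to the explicit $u_\star^\e$ computation).

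The rigidity therefore has to come from the second variation along the Möbius direction. Using the change-of-variables formula with conformal factor $\lambda_\beta$ of $F_\beta$ and the explicit form $u_{0,a}^\e=\phi_a\circ u_\star^\e$, one expands
$$\mathbf{E}(v_\beta,B_1^+)=\pi+\tfrac12 Q_a(\beta)+o(|\beta|^2),$$
and the goal is to exhibit a direction $\beta$ making $Q_a(\beta)<0$ whenever $a>0$. The expected mechanism is the following: the conformal factor $\lambda_\beta$ of the Poincaré extension $F_\beta$ is spatially asymmetric (concentrated away from the ``shifted origin''), and it couples with $|\nabla u_{0,a}^\e|^2$, which itself is asymmetric around $\mathbf 0$ precisely when $a\neq 0$. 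At $a=0$ the two asymmetries balance (by the full $O(2)$-symmetry of $u_\star^\e$) and $Q_0(\beta)\geq 0$; at $a>0$ the asymmetries add constructively in a direction $\beta$ opposite to $a$, producing $Q_a(\beta)<0$. This contradicts $\mathbf{E}(v_\beta,B_1^+)\geq \pi$ and forces $a=0$.

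The main obstacle is Step 4, the quantitative evaluation of $Q_a$. One must carefully track the cutoff contribution (which is of order $1$ and must be identified as non-obstructive), integrate by parts using the vanishing $T_{j3}$ identities on $\partial^0 B_1^+$, and extract a sign-definite leading term in $a$ from the interplay between $\lambda_\beta$ and $|\nabla(\phi_a\circ u_\star^\e)|^2$. The explicit formulas for $u_\star^\e$ and $\phi_a$ make this computation tractable but lengthy.
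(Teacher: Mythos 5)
Your proposal contains a genuine gap that the paper sidesteps entirely. You assert that the first-order variation of $\beta\mapsto\mathbf{E}(v_\beta,B_1^+)$ at $\beta=0$ ``vanishes automatically because $u_{0,a}^\e$ is stationary (its stress-energy tensor is divergence-free and the free-boundary components $T_{j3}$ vanish on $\partial^0 B_1^+$).'' This is wrong for $a\neq 0$: those identities are local and hold only away from the origin, where $u_{0,a}^\e$ is smooth. For a $0$-homogeneous map one has $|\nabla u_{0,a}^\e|\sim |\mathbf{x}|^{-1}$, and when the stress-energy tensor is contracted with a vector field $Y$ that does not vanish at the origin (which is the case for your Poincar\'e-extended M\"obius flow, since the Poincar\'e extension of $\phi_\beta$ moves the center of the ball when $\beta\neq 0$), the integration by parts produces a boundary term at $\mathbf{0}$ that is of order one and, up to a universal constant, equals the balance integral $\int_{\partial^+B_1}|\nabla u_{0,a}^\e|^2\,x\,\de\mathcal{H}^2$. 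This quantity is \emph{not} zero for $a\neq 0$; its vanishing is exactly the nontrivial consequence of stationarity that the paper extracts. Hence the first variation is not automatically zero, and dismissing it leads you to a second-variation argument that is both unnecessary and, as you acknowledge, left unexecuted (``tractable but lengthy'').

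The paper's proof is much more economical and uses precisely the first-variation information you discard. Since $u_0$ is minimizing, $u_0^\e$ is stationary in the sense of \cite[Remark~4.11]{MS}, so identity \eqref{vend1714} holds. Testing it with the cut-off translation field $X(\mathbf{x})=\eta(|\mathbf{x}|)e$, $e\in\mathbb{S}^1\times\{0\}$, and exploiting $0$-homogeneity (so $\mathbf{x}\cdot\nabla u_0^\e=0$) immediately yields the balance condition \eqref{balancecond}. Conformal invariance and the explicit formula $|\nabla w|^2 = 2|w'|^2 = 2(1-|\alpha|^2)/|1-\bar\alpha z|^4$ for $w=e^{i\theta}\phi_\alpha$ then reduce \eqref{balancecond} to $\int_{\mathbb{D}}\frac{z}{(1+|z|^2)|1-\bar\alpha z|^4}\,\de z=0$, which forces $\alpha=0$ (for $\alpha\neq 0$ the weight $|1-\bar\alpha z|^{-4}$ breaks the symmetry and the integral points in the direction of $\alpha$). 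Your overall strategy — reduce to $\deg=+1$, normalize, and try to rule out $a\neq 0$ by competitor deformations — is sensible, and indeed M\"obius-type domain deformations could in principle reproduce the balance condition via their first variation, but you must retain the singular boundary contribution at the origin rather than discarding it.
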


\begin{proof}
{\it Step 1.} By Theorem \ref{equivmin}, $u_0^\e$ is a minimizing harmonic map with free boundary in $B_1^+$. Therefore, $u_0^\e$ is stationary in $B_1^+$ in the sense of \cite[Definition 4.10]{MS}, see \cite[Remark~4.13]{MS}. In turn, by \cite[Remark~4.11]{MS} it implies that 
\begin{equation}\label{vend1714}
\int_{B_1^+}\left(|\nabla u_0^\e|^2{\rm div}\, X  -2\sum_{i,j=1}^3(\partial_iu_0^\e\cdot \partial_ju_0^\e)\partial_jX_i\right)\,\de{\bf x}=0
\end{equation}
for every $X:=(X_1,X_2,X_3)\in C^1(\overline B^+_1;\R^3)$ compactly supported in $B_1^+\cup\partial^0 B_1^+$ and such that $X_3=0$ on $\partial^0 B_1^+$. 

We now consider a unit vector $e\in\mathbb{S}^1\times\{0\}$ and an even function $\eta\in C^1(\R)$ compactly supported in $(-1,1)$. Using the vector field $X({\bf x}):=\eta(|{\bf x}|)e$ in \eqref{vend1714}, we obtain 
\begin{equation}\label{vend1716}
\int_{B_1^+}\Big(|\nabla u_0^\e|^2\,{\bf x}\cdot e-2(e\cdot \nabla u_0^\e)\cdot({\bf x}\cdot\nabla u_0^\e)\Big)\eta^\prime(|{\bf x}|)\,\frac{\de{\bf x}}{|{\bf x}|}=0\,. 
\end{equation}
On the other hand, since $u_0$ is $0$-homogeneous, $u_0^\e$ is also $0$-homogeneous. Hence ${\bf x}\cdot\nabla u_0^\e=0$, and by Fubini's theorem, \eqref{vend1716} yields 
$$\left(\int_{\partial^+B_1} |\nabla u_0^\e|^2{\bf x}\cdot e\,\de\mathcal{H}^2\right)\left(\int_0^1\eta^\prime(r)\,\de r\right) =0\,,$$
since  $\nabla u_0^\e$ is homogeneous of degree $-1$. 
By arbitrariness of $\eta$ and $e$, we conclude that 
\begin{equation}\label{balancecond}
\int_{\partial^+B_1} |\nabla u_0^\e|^2 x\,\de\mathcal{H}^2=0
\end{equation}
(recall that ${\bf x}=(x,x_3)$).
\vskip5pt

\noindent{\it Step 2.} Since minimality is preserved under complex conjugation (i.e., $\overline u_0$ is also a $0$-homogeneous minimizing $1/2$-harmonic map), we may assume that ${\rm deg}(g)=1$ (otherwise we consider $\overline g$ instead of $g$). Then we infer from Lemma \ref{rep0homharmmap} that 
$$u_0^\e({\bf x})=w\circ \mathfrak{S}^{-1}\left(\frac{{\bf x}}{|{\bf x}|}\right)\quad\text{with}\quad w(z)=e^{i\theta}\frac{z-\alpha}{1-\bar\alpha z}\,,$$
for some $\theta\in[0,2\pi[$ and $\alpha\in\mathbb{D}$ (where $\mathfrak{S}^{-1}$ is the stereographic projection \eqref{sterproj}). 

By conformal invariance, we have 
\begin{equation}\label{calcbalw}
 \int_{\partial^+B_1} |\nabla u_0^\e|^2 x\,\de\mathcal{H}^2=2\int_{\mathbb{D}}\big|\nabla w(z)\big|^2\frac{z}{1+|z|^2}\,\de z\,.
 \end{equation}
In addition, since $w$ is holomorphic in $\mathbb{D}$, we have 
\begin{equation}\label{calcgradholom}
|\nabla w(z)|^2=2|w^\prime(z)|^2=\frac{(1-|\alpha|^2)}{|1-\overline\alpha z|^4}\,.
\end{equation}
Hence, combining \eqref{balancecond}, \eqref{calcbalw}, and \eqref{calcgradholom} yields
$$\int_{\mathbb{D}}\frac{z}{(1+|z|^2)|1-\overline\alpha z|^4}\,\de z=0 \,,$$
which in turn implies that $\alpha=0$. In other words, $g(z)=e^{i\theta}z$, i.e., $g$ is a rotation.  
\end{proof}

\begin{lemma}\label{lemmodw}
Let $u_0$ be a $0$-homogeneous minimizing $1/2$-harmonic map from $\R^2$ into~$\mathbb{S}^1$. If $u_0^\e({\bf x})=w\circ\mathfrak{S}^{-1}(\frac{{\bf x}}{|{\bf x}|})$ with  $\mathfrak{S}^{-1}$ the stereographic projection \eqref{sterproj}, and
$$w(z)= e^{i\theta}\prod_{j=1}^d\frac{z-\alpha_j}{1-\overline{\alpha_j}z} \,,$$
with $d\in\mathbb{N}\setminus\{0\}$, and $\alpha_1,\ldots,\alpha_d\in\mathbb{D}$, 
then 
$$|w(z)|\leq \left(\frac{3|z|+1}{|z|+3}\right)^d\quad\text{for every $z\in\mathbb{D}$} \,.$$
\end{lemma}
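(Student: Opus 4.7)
Writing the target bound as $\left(\tfrac{3|z|+1}{|z|+3}\right)^d = [\phi_{-1/3}(|z|)]^d$ (where $\phi_{-1/3}(r) = \tfrac{r+1/3}{1+r/3}$), the strategy is to combine a purely analytic reduction with a minimality-based constraint on the zeros of $w$.

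\textbf{Step 1 (analytic reduction to zero locations).} Each Möbius factor satisfies the Schwarz--Pick-type estimate
$$|\phi_{\alpha_j}(z)| \leq \frac{|z|+|\alpha_j|}{1+|\alpha_j||z|},$$
obtained by optimizing over the phase of $\alpha_j$. The function $\rho \mapsto \frac{r+\rho}{1+r\rho}$ has derivative $\frac{1-r^2}{(1+r\rho)^2}\geq 0$ for $r=|z|\in[0,1]$, so it is nondecreasing in $\rho$. Consequently, if every zero $\alpha_j$ of $w$ lies in the closed disc $\overline{D_{1/3}}$, then taking products yields the claimed bound
$$|w(z)| = \prod_{j=1}^d |\phi_{\alpha_j}(z)| \leq \left(\frac{|z|+1/3}{1+|z|/3}\right)^d = \left(\frac{3|z|+1}{|z|+3}\right)^d.$$
Thus the lemma reduces to showing that $|\alpha_j|\leq 1/3$ for every $j\in\{1,\ldots,d\}$.

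\textbf{Step 2 (zero-location constraint from minimality).} To establish $|\alpha_j|\leq 1/3$, I would argue by contradiction: assuming some $|\alpha_{j_0}|>1/3$, I would construct a competitor $\tilde v\in H^1(B_1^+;\R^2)$ satisfying $\tilde v=u_0^\e$ on $\partial^+B_1$ and $|\tilde v|=1$ on $\partial^0B_1^+$, with $\mathbf{E}(\tilde v,B_1^+)<\pi d$. This contradicts the minimality of $u_0^\e$ as a harmonic map with free boundary in $B_1^+$ (Theorem \ref{equivmin} combined with the energy identity \eqref{quantenerg2d}). The competitor is built by composing $u_0^\e$ with a hyperbolic translation $\phi_\beta$ of the target disc that drags $\alpha_{j_0}$ toward the origin, and then gluing back to $u_0^\e$ through a smooth cutoff in a thin collar of $\partial^+B_1$ so as to preserve the Dirichlet datum. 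The optimal direction for $\beta$ is dictated by the balance identity from Proposition \ref{casd=1} applied at the zero $\alpha_{j_0}$.

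\textbf{Main obstacle.} The delicate point is the energy accounting for the competitor: one must show that the interior energy gain from moving $\alpha_{j_0}$ inward (a first-order effect proportional to the Möbius displacement) strictly dominates the cost of the cutoff transition in the collar (a second-order effect). After the stereographic change of variables, this reduces to the explicit evaluation of weighted integrals of holomorphic quantities on $\mathbb{D}$ against the conformal density $(1+|z|^2)^{-2}$, with the critical threshold $1/3$ emerging from balancing these terms. This is precisely the kind of ``numerical evaluation of certain integrals'' that the introduction advertises as the technical novelty compared to \cite{BCL}, and it is the main source of difficulty for the lemma.
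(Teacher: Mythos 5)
Your Step 1 is correct and matches the paper's reduction exactly: by the Schwarz--Pick-type bound $\bigl|\frac{z-\alpha_j}{1-\overline{\alpha_j}z}\bigr|\leq\frac{|z|+|\alpha_j|}{1+|\alpha_j||z|}$ and the monotonicity of $\rho\mapsto\frac{r+\rho}{1+r\rho}$ on $[0,1]$, the lemma reduces to $\max_j|\alpha_j|\leq 1/3$.

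Your Step 2, however, has a genuine gap, and the issue is structural rather than computational. The claimed ``interior energy gain from moving $\alpha_{j_0}$ inward'' does not exist. Replacing $u_0^\e$ by $\phi_\beta\circ u_0^\e$ in the bulk of $B_1^+$ replaces the angular profile $w$ by the post-composed Blaschke product $\phi_\beta\circ w$, which still has degree $d$. By the conformal invariance used in \eqref{quantenerg2d} (equivalently Theorem~\ref{classifdemicirc}), \emph{every} degree-$d$ Blaschke product has $0$-homogeneous extension energy exactly $\pi d$ in $B_1^+$, regardless of where its zeros sit. So the bulk contributes $\pi d$ per unit radius exactly as for $u_0^\e$ itself; the collar transition then only \emph{adds} radial energy. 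Your competitor would therefore have energy strictly larger than $\pi d$, not smaller, and the proof by contradiction collapses. (The invocation of the balance identity from Proposition~\ref{casd=1} is likewise misplaced: that identity is used there to pin the single zero at the origin when $d=1$, not to choose a deformation direction in the present construction.)

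The paper's construction works because it does something qualitatively different: instead of post-composing and moving the zero toward $0$, it makes the offending Blaschke \emph{parameter} a radius-dependent quantity $\boldsymbol{\beta}(r)$ that is driven to the boundary value $1$ (not $0$) as $r$ decreases to $\varepsilon$. When $\boldsymbol{\beta}=1$, the factor $\frac{z-1}{1-z}\equiv-1$ becomes a unimodular constant and drops out, so the angular profile on $\{|\mathbf{x}|\leq\varepsilon\}$ is a degree-$(d-1)$ Blaschke product with angular energy $\pi(d-1)$. This produces a strict angular-energy deficit $\pi\varepsilon$, which must dominate the radial transition cost. The threshold $1/3$ then emerges from optimizing the transition profile and numerically checking $\sqrt{2}\int_{1/3}^1\sqrt{F(t^2)}\,\de t<1$. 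So the key idea you are missing is that the competitor must \emph{lower the degree} of the angular profile on an inner ball by ejecting a zero to $\partial\mathbb{D}$; merely relocating zeros inside $\mathbb{D}$ is energy-neutral.
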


\begin{proof}
The case $d=1$ is a direct consequence of Proposition \ref{casd=1}, so it remains to consider the case $d\geq 2$.  
Set $\boldsymbol{\delta}:=\max_j|\alpha_j|\in [0,1)$. We may assume without loss of generality that $\boldsymbol{\delta}=|\alpha_d|$. Since minimality is preserved under rotations on the image (i.e., $Au_0$ is  a $0$-homogeneous minimizing $1/2$-harmonic map for every $A\in SO(2,\R)$), we can also assume that $\alpha_d\in [0,1)$, so that $\boldsymbol{\delta}=\alpha_d$. Then we write 
$$w(z)=\frac{z-\boldsymbol{\delta}}{1-\boldsymbol{\delta}z}\,\widetilde w(z)\quad\text{with}\quad \widetilde w(z)= e^{i\theta}\prod_{j=1}^{d-1}\frac{z-\alpha_j}{1-\overline{\alpha_j}z}\,.$$
We aim to prove that 
\begin{equation}\label{esticrucdelta}
\boldsymbol{\delta}\leq 1/3 \,,
\end{equation}
which immediately leads to the conclusion since 
$$\frac{|z-\alpha_j|}{|1-\overline{\alpha_j}z|}\leq \frac{|z|+|\alpha_j|}{|\alpha_j||z|+1} \leq\frac{|z|+\boldsymbol{\delta}}{\boldsymbol{\delta}|z|+1} \leq \frac{3|z|+1}{|z|+3}$$
for each $j$ and every $z\in\mathbb{D}$.
\vskip3pt

To prove \eqref{esticrucdelta}, we shall construct suitable competitors to test the minimality of $u_0^\e$ in $B_1^+$ (recall that $u_0^\e$ is a minimizing harmonic map with free boundary in $B_1^+$ by Theorem \ref{equivmin}). 
Given a parameter $\varepsilon\in(0,1)$, we consider a smooth function $\boldsymbol{\beta}:[0,1]\to [0,1]$ such that $\boldsymbol{\beta}(r)=\boldsymbol{\delta}$ in a neighborhood of $r=1$,  $\boldsymbol{\beta}(r)<1$ for $r>\varepsilon$, and  $\boldsymbol{\beta}(r)=1$ for $r\leq \varepsilon$.  Next we consider the smooth map on $\mathbb{D}\times[0,1]$ given by
$$\widehat w(z,r):= \frac{z-\boldsymbol{\beta}(r)}{1-\boldsymbol{\beta}(r)z}\,\widetilde w(z)\,.$$
By construction,  $\widehat w(\cdot, r)$ is a Blaschke product with  $d$ factors for $r>\varepsilon$, and $(d-1)$ factors for $r\leq \varepsilon$ (more precisely, $\widehat w(\cdot, r)=\widetilde w$ for $r\leq\varepsilon$). Setting $g_r:=\widehat w(\cdot, r)_{|\partial\mathbb{D}}$, we then have ${\rm deg}(g_r)=d$ for $r>\varepsilon$, and ${\rm deg}(g_r)=d-1$ for $r\leq\varepsilon$. From \eqref{idenergcircl} and Theorem \ref{classifdemicirc}, we infer that
\begin{equation}\label{mer1538}
\frac{1}{2}\int_{\mathbb{D}}  \big|\nabla_z \widehat w(z, r)\big|^2\,\de z=\mathcal{E}(g_r,\mathbb{S}^1)=\begin{cases}
\pi d & \text{for $r>\varepsilon$}\,,\\
\pi(d-1) & \text{for $r\leq \varepsilon$}\,.
\end{cases}
\end{equation}
In addition, since $|\widetilde w|\leq 1$, we have  the pointwise estimate 
\begin{equation}\label{mer1539}
\left|\frac{\partial\widehat w}{\partial r}(z,r)\right|^2\leq  \frac{|z^2-1|^2}{|1-\boldsymbol{\beta}(r)z|^4}|\boldsymbol{\beta}^\prime(r)|^2=\frac{(1+|z|^2)^2-4z_1^2}{(1-2\boldsymbol{\beta}(r)z+\boldsymbol{\beta}^2(r)|z|^2)^2}|\boldsymbol{\beta}^\prime(r)|^2\,.
\end{equation}

We define a map $v\in H^1(B_1^+;\R^2)$ by setting
$$v({\bf x}):=\widehat w\left(\mathfrak{S}^{-1}\left(\frac{{\bf x}}{|\bf{x}|}\right), |{\bf x}|\right) \quad\text{for ${\bf x}\in B_1^+$}\,.$$
Note that $|v|=1$ on $\partial^0B_1^+$, and that $v=u_0^\e$ in a neighborhood of $\partial^+B_1$. Hence $v$ is an admissible competitor to test the minimality of $u_0^\e$ in $B_1^+$, i.e., 
\begin{equation}\label{mer1545}
{\bf E}(v,B_1^+)\geq {\bf E}(u_0^\e,B_1^+)=\pi d\,,
\end{equation}
where we have used \eqref{quantenerg2d} in the last equality. 

Computing the energy of $v$ in polar coordinates, we obtain  
\begin{multline}\label{cpamer16}
{\bf E}(v,B_1^+)=\int_0^1\left(\frac{1}{2}\int_{\partial^+B_1}\big|\nabla_\tau v(r{\bf x})\big|^2\,\de\mathcal{H}^2\right)\,\de r\\
+\int_\varepsilon^1\left(\frac{r^2}{2}\int_{\partial^+B_1}\big|\partial_r v(r{\bf x})\big|^2\,\de\mathcal{H}^2\right)\,\de r\,. 
\end{multline}
By conformal invariance, we have
\begin{equation}\label{couaill16}
\frac{1}{2}\int_{\partial^+B_1}\big|\nabla_\tau v(r{\bf x})\big|^2\,\de\mathcal{H}^2=\frac{1}{2}\int_{\mathbb{D}} \big|\nabla_z \widehat w(z, r)\big|^2\,\de z\,.
\end{equation}
Combining \eqref{mer1538}, \eqref{cpamer16}, and \eqref{couaill16} yields
\begin{equation}\label{cpamer16bis}
{\bf E}(v,B_1^+)=\pi(d-\varepsilon)
+\int_\varepsilon^1\left(\frac{r^2}{2}\int_{\partial^+B_1}\big|\partial_r v(r{\bf x})\big|^2\,\de\mathcal{H}^2\right)\,\de r\,. 
\end{equation}

Then, recalling that
\begin{equation}\label{pushforstereoproj}
{\mathfrak{S}^{-1}}_\sharp \mathcal{H}^2\res\mathbb{S}^2= \frac{4}{(1+|z|^2)^2}\,\de z\,,
\end{equation}
we obtain 
$$\frac{r^2}{2}\int_{\partial^+B_1}\big|\partial_r v(r{\bf x})\big|^2\,\de\mathcal{H}^2= 2r^2\int_{\mathbb{D}}\frac{\big|\partial_r\widehat w(z,r)\big|^2}{(1+|z|^2)^2}\,\de z\,.$$
In turn, this last identity  together with \eqref{mer1539} and Lemma \ref{integzeroes} yields 
\begin{equation}\label{couaill17}
\frac{r^2}{2}\int_{\partial^+B_1}\big|\partial_r v(r{\bf x})\big|^2\,\de\mathcal{H}^2\leq 2\pi r^2 F\big(\boldsymbol{\beta}^2(r)\big)\big|\boldsymbol{\beta}^\prime(r)\big|^2
\end{equation}
with 
$$F(t):= \left(\frac{t^2-10t+1}{(1+t)^4}\right)\log\left(\frac{(1-t)^2}{4}\right)-\frac{t^2+11t-2}{(1+t)^3} \,. $$
Notice that $F:[0,1)\to\mathbb{R}$ is an increasing function, and that $F(0)=2-2\log(2)>0$. 
\vskip3pt

Gathering  \eqref{mer1545}, \eqref{cpamer16bis},  and \eqref{couaill17} leads to 
\begin{equation}\label{mersoir18}
\pi \varepsilon \leq 2\pi \int_{\varepsilon}^1 r^2 F\big(\boldsymbol{\beta}^2(r)\big)\big|\boldsymbol{\beta}^\prime(r)\big|^2\,\de r\,.
\end{equation}
Next we set $\boldsymbol{\beta}(r)=:\boldsymbol{\gamma}(\varepsilon/r)$, so that $\boldsymbol{\gamma}:[\varepsilon,1]\to[0,1]$ satisfies $\boldsymbol{\gamma}(1)=1$, $\boldsymbol{\gamma}(t)<1$ for $t<1$, and $\boldsymbol{\gamma}(t)=\boldsymbol{\delta}$ in a neighborhood of $t=\varepsilon$. Changing variables in \eqref{mersoir18}, we infer that 
$$1 \leq 2 \int_{\varepsilon}^1 F\big(\boldsymbol{\gamma}^2(t)\big)\big|\boldsymbol{\gamma}^\prime(t)\big|^2\,\de t\,.$$
In view of our arbitrary choice of $\varepsilon$ and $\boldsymbol{\gamma}$, we conclude that 
\begin{equation}\label{mersoir182}
1 \leq 2 \int_{0}^1 F\big(\boldsymbol{\gamma}^2(t)\big)\big|\boldsymbol{\gamma}^\prime(t)\big|^2\,\de t
\end{equation}
for every $C^1$-function $\boldsymbol{\gamma}:[0,1]\to[0,1]$ satisfying $\boldsymbol{\gamma}(0)=\boldsymbol{\delta}$ and $\boldsymbol{\gamma}(1)=1$. Setting
$$G(s):=\int_0^s\sqrt{F(t^2)}\,\de t \,,$$
inequality \eqref{mersoir18} must hold for $\boldsymbol{\gamma}(t):=G^{-1}\big(G(1)t+G(\boldsymbol{\delta})(1-t)\big)$, which returns the inequality $1\leq 2\big(G(1)-G(\boldsymbol{\delta})\big)^2$. Therefore, 
$$1 \leq \sqrt{2} \int_{\boldsymbol{\delta}}^1 \sqrt{F(t^2)}\,\de t=:J(\boldsymbol{\delta})\,.$$
Since $J(1/3)\approx 0.971<1$, we finally reach the conclusion that $\boldsymbol{\delta}\leq 1/3$. 
\end{proof}

\begin{proposition}\label{competitorconcl}
Let $g:\mathbb{S}^1\to\mathbb{S}^1$ be a $1/2$-harmonic circle. If $d:=|{\rm deg}(g)|\geq 2$, then the map $u_0:=g(\frac{x}{|x|})$ is not a $0$-homogeneous minimizing $1/2$-harmonic map from~$\R^2$ into $\mathbb{S}^1$. 
\end{proposition}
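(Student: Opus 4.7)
The plan is to argue by contradiction, upgrading the competitor construction of Lemma~\ref{lemmodw} by using the sharper pointwise bound on $|\widetilde w|$ that Lemma~\ref{lemmodw} itself provides, and then closing via the numerical evaluation of a single one-dimensional integral. Assume by contradiction that $u_0(x)=g(x/|x|)$ is a $0$-homogeneous minimizing $1/2$-harmonic map with $d:=|{\rm deg}(g)|\geq 2$. Up to replacing $u_0$ by $\overline{u_0}$ (which is also minimizing), we may take ${\rm deg}(g)=d$, so by Lemma~\ref{rep0homharmmap} we have $u_0^{\rm e}({\bf x})=w\circ\mathfrak{S}^{-1}({\bf x}/|{\bf x}|)$ with $w$ a Blaschke product of degree $d$ and zeros $\alpha_1,\dots,\alpha_d\in\mathbb{D}$. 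Lemma~\ref{lemmodw} yields $\boldsymbol{\delta}:=\max_j|\alpha_j|\leq 1/3$, and after a suitable rotation we may arrange $\alpha_d=\boldsymbol{\delta}\in[0,1/3]$ and factor $w(z)=\tfrac{z-\boldsymbol{\delta}}{1-\boldsymbol{\delta}z}\widetilde w(z)$, where $\widetilde w$ is a Blaschke product of degree $d-1$ whose zeros all have modulus $\leq 1/3$. Applying the same monotonicity argument as in the proof of Lemma~\ref{lemmodw} factor by factor then gives the key improved estimate
$$|\widetilde w(z)|\leq \left(\frac{3|z|+1}{|z|+3}\right)^{d-1},\qquad z\in\mathbb{D},$$
which sharpens the trivial bound $|\widetilde w|\leq 1$ used in Lemma~\ref{lemmodw}.

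Next I would replay the construction of Lemma~\ref{lemmodw} verbatim: for $\varepsilon\in(0,1)$ and a smooth profile $\boldsymbol{\beta}\colon[0,1]\to[0,1]$ with $\boldsymbol{\beta}=\boldsymbol{\delta}$ near $r=1$, $\boldsymbol{\beta}(r)<1$ for $r>\varepsilon$, and $\boldsymbol{\beta}(r)=1$ for $r\leq\varepsilon$, set $\widehat w(z,r):=\tfrac{z-\boldsymbol{\beta}(r)}{1-\boldsymbol{\beta}(r)z}\widetilde w(z)$ and $v({\bf x}):=\widehat w(\mathfrak{S}^{-1}({\bf x}/|{\bf x}|),|{\bf x}|)$. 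Since $\widetilde w$ does not depend on $r$, the derivative $\partial_r\widehat w$ factors as $\partial_r(\text{Blaschke factor})\cdot\widetilde w$, so the pointwise estimate \eqref{mer1539} improves by the multiplicative weight $\bigl(\tfrac{3|z|+1}{|z|+3}\bigr)^{2(d-1)}$. Carrying out the angular integration on $\mathbb{D}$ in polar coordinates $z=\rho e^{i\varphi}$ (by the same antiderivative formula from \cite[GW (213)(5b) p. 326]{Grad} used in Proposition~\ref{trivtangmap}, exactly as in Lemma~\ref{integzeroes}) then produces the sharpened bound
$$\int_\varepsilon^1 \frac{r^2}{2}\int_{\partial^+B_1}|\partial_r v|^2\,\de\mathcal{H}^2\,\de r\,\leq\,2\pi\int_\varepsilon^1 r^2 F_d(\boldsymbol{\beta}^2(r))|\boldsymbol{\beta}'(r)|^2\,\de r,$$
with
$$F_d(t):=\frac{1}{\pi}\int_{\mathbb{D}}\frac{|z^2-1|^2}{(1+|z|^2)^2\,|1-\sqrt{t}\,z|^4}\left(\frac{3|z|+1}{|z|+3}\right)^{2(d-1)}\de z.$$
The tangential contribution is unchanged, equal to $\pi(d-\varepsilon)$, so combining with \eqref{quantenerg2d} and the minimality of $u_0^{\rm e}$ in $B_1^+$ and optimizing over the substitution $\boldsymbol{\beta}(r)=\boldsymbol{\gamma}(\varepsilon/r)$ as in the last part of the proof of Lemma~\ref{lemmodw} yields the necessary condition
$$1\leq J_d(\boldsymbol{\delta}):=\sqrt{2}\int_{\boldsymbol{\delta}}^1\sqrt{F_d(t^2)}\,\de t.$$

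The main obstacle is then the numerical verification that this inequality fails. Since $\tfrac{3|z|+1}{|z|+3}<1$ on $\mathbb{D}$, one has $F_d\leq F_2$ pointwise on $[0,1)$ for every $d\geq 2$, and since $F_d>0$ the map $\boldsymbol{\delta}\mapsto J_d(\boldsymbol{\delta})$ is strictly decreasing, so $J_d(\boldsymbol{\delta})\leq J_2(0)$ for all $\boldsymbol{\delta}\in[0,1/3]$ and all $d\geq 2$. It therefore suffices to establish the clean numerical inequality
$$J_2(0)=\sqrt{2}\int_0^1\sqrt{F_2(t^2)}\,\de t<1.$$
The concrete strategy I would follow is to first reduce $F_2(t^2)$ to a single integral in $\rho\in[0,1]$ by performing the angular integration in $z=\rho e^{i\varphi}$ explicitly (yielding a closed-form expression in $t$ and $\rho$ through the same family of antiderivatives used in Proposition~\ref{trivtangmap}), and then to estimate the nested integration $\sqrt{2}\int_0^1\sqrt{F_2(t^2)}\,\de t$ by sufficiently tight numerical quadrature to exhibit strict inequality with $1$. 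The resulting contradiction with $J_d(\boldsymbol{\delta})\geq 1$ proves that no such $u_0$ can be a minimizing $0$-homogeneous $1/2$-harmonic map when $d\geq 2$.
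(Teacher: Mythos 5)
Your derivation of the necessary condition $1 \leq J_d(\boldsymbol{\delta}) := \sqrt{2}\int_{\boldsymbol{\delta}}^1\sqrt{F_d(t^2)}\,\de t$ is correct, as is the improved bound $|\widetilde w|\leq\bigl(\tfrac{3|z|+1}{|z|+3}\bigr)^{d-1}$ that you feed into the radial energy estimate. The problem is the last step: the numerical claim $J_2(0)<1$ that would close the argument is \emph{false}. Start from the paper's figure $J(1/3)\approx0.971$, and note that $\sqrt{F(t^2)}$ lies between $\sqrt{F(0)}\approx0.78$ and $\sqrt{F(1/9)}\approx0.81$ on $[0,1/3]$, so $J(0)\approx0.971+\sqrt{2}\cdot\tfrac13\cdot0.8\approx1.35$. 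Your $F_2$ is obtained from $F$ by inserting the radial weight $W(\rho):=\bigl(\tfrac{3\rho+1}{\rho+3}\bigr)^2$ in the integrand; a Simpson evaluation gives $F_2(0)\approx0.39$, i.e.\ $F_2(0)/F(0)\approx0.63$, and the ratio $F_2(t)/F(t)$ increases toward $1$ as $t\to1$, because the kernel $|1-\sqrt t\,z|^{-4}$ concentrates at $z=1$, where $W$ equals $1$. Numerically $F_2(t)\geq0.63\,F(t)$ on $[0,1)$, hence
$$J_2(0)=\sqrt2\int_0^1\sqrt{F_2(t^2)}\,\de t\geq\sqrt{0.63}\,J(0)\approx0.79\times1.35\approx1.07>1\,,$$
and a more careful evaluation puts $J_2(0)$ around $1.1$--$1.2$. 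So the constraint $1\leq J_d(\boldsymbol{\delta})$ is \emph{not} violated for $d=2$ and small $\boldsymbol{\delta}$, and no contradiction follows.

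The obstruction is structural rather than a matter of quadrature precision. You reuse verbatim the competitor of Lemma~\ref{lemmodw}, which for $r<\varepsilon$ is a Blaschke product of degree $d-1$; its tangential saving over $u_0^{\mathrm e}$ is therefore $\pi\varepsilon$, \emph{independently of $d$}. The paper's proof of Proposition~\ref{competitorconcl} uses a genuinely different competitor: conjugating by the Cayley transform $\mathfrak{C}$ and scaling $f:=\mathfrak{C}^{-1}\circ w$ by $1/\boldsymbol{\theta}(r)$, so that for $r<\varepsilon$ the competitor collapses to the \emph{constant} $1$ and the tangential saving becomes $\pi d\varepsilon$. That extra factor of $d$ is exactly what, after averaging over rotations on the image and invoking the bound $|w|\leq\bigl(\tfrac{3|z|+1}{|z|+3}\bigr)^d$ from Lemma~\ref{lemmodw}, produces the final numerical inequality $d\leq1.93$ (approximately), excluding $d\geq2$ with only a $3.5\%$ margin. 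Your construction loses the factor of $d$ on the tangential side and attempts to recover it by shrinking the radial energy with the weight $W(|z|)^{d-1}$; for $d=2$ this yields only a reduction $F_2\geq 0.63\,F$, which is far from enough. To salvage this line one would have to modify the Lemma~\ref{lemmodw} construction so that the tangential saving itself scales with $d$, which is precisely what the Cayley-transform competitor accomplishes.
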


\begin{proof}
We argue by contradiction assuming that $u_0$ is a $0$-homogeneous minimizing $1/2$-harmonic map in $\R^2$.  Once again, it implies that $u_0^\e$ is a minimizing harmonic map with free boundary in $B_1^+$ by Theorem \ref{equivmin}. 
By Lemma \ref{rep0homharmmap}, $u_0^\e$ is of the form \eqref{formblaschkeprod}, and without loss of generality we can assume that the map $w$ in \eqref{formblaschkeprod} is equal to the right hand side of \eqref{formblaschkeprod} (otherwise we consider the complex conjugate of $u_0$ instead of $u_0$, which is also minimizing).  

We shall build competitors to test the minimality of $u_0^\e$, and to this purpose we consider the extended complex plane $\mathbb{C}\cup\{\infty\}$.  We also identify $\R^2_+$ with the complex upper half  plane 
$\mathbb{C}_+:=\big\{z\in\mathbb{C}: \mathfrak{Im}(z)> 0\big\}$. We consider the Cayley transform $\mathfrak{C}:\overline{\mathbb{C}_+}\to \overline{\mathbb{D}}\setminus\{1\}$ given by  
\begin{equation}\label{caltrans}
\mathfrak{C}(z):= \frac{z-i}{z+i}\,,
\end{equation}
and its inverse 
\begin{equation}\label{invcaltrans}
\mathfrak{C}^{-1}(z)= \frac{i(1+z)}{1-z}\,.
\end{equation}
Note that $\mathfrak{C}$ maps the real line $\R\times\{0\}=\partial \mathbb{C}_+$ into $\mathbb{S}^1\setminus\{1\}=\partial\mathbb{D}\setminus\{1\}$. In the sequel, we use the (standard) convention 
$$\mathfrak{C}^{-1}(1)=\infty\quad\text{and}\quad \mathfrak{C}(\infty)=1\,. $$

We define a map $f:\overline{\mathbb{D}}\to \overline{\mathbb{C}_+}\cup\{\infty\}$ by setting
\begin{equation}\label{deffcaley}
f(z):=(\mathfrak{C}^{-1}\circ w)(z)\,.
\end{equation}
As a complex valued function, $f$ is a rational function of $z$ with poles (exactly) at the finite set $Z^+_w:=w^{-1}(\{1\})\subset\mathbb{S}^1$. 
In particular,  $f$ is smooth in $\overline{\mathbb{D}}\setminus Z^+_w$.  
In addition, $f(\mathbb{D})=\mathbb{C}_+$, and $f\big(\mathbb{S}^1\setminus Z^+_w\big)=\R\times\{0\}$. 
\vskip3pt

Given a parameter $\varepsilon\in(0,1)$, we consider a smooth function $\boldsymbol{\theta}:[0,1]\to [0,1]$ such that $\boldsymbol{\theta}(r)=1$ in a neighborhood of $r=1$,  $\boldsymbol{\theta}(r)>0$ for $r>\varepsilon$, and  $\boldsymbol{\theta}(r)=0$ for $r\leq \varepsilon$. Next we define the smooth map on $B_1^+$ given by 
$$v({\bf x}):=\mathfrak{C}\left(\frac{1}{\boldsymbol{\theta}(|{\bf x}|)} f\circ\mathfrak{S}^{-1}\left(\frac{{\bf x}}{|{\bf x}|}\right)\right)\,,$$
where $\mathfrak{S}^{-1}$ is the stereographic projection \eqref{sterproj}. With the convention $0/0=\infty$, we observe that $v$ extends smoothly up to $\partial B_1^+$ except for finitely many points in $\partial^0B^+_1$. More precisely, 
setting $Z^-_w:=w^{-1}(\{-1\})\subset\mathbb{S}^1$, the set $Z^-_w$ is finite, and $v$ is smooth in $\overline{B_1^+}\setminus (\varepsilon Z_w^-\times\{0\})$. By construction, $v=1$ in $B_\varepsilon^+$, $|v|=1$ on $\partial^0 B_1^+$, and  $v=u^\e$ in a neighborhood of $\partial^+B_1$.  As our computations will show, $v\in H^1(B_1^+;\R^2)$ so that $v$ is an admissible competitor to test the minimality of $u_0^\e$ in $B_1^+$, i.e., 
\begin{equation}\label{cpaminenergmardsoir}
{\bf E}(v,B_1^+)\geq {\bf E}(u_0^\e,B_1^+)=\pi d\,, 
\end{equation}
where we have used \eqref{quantenerg2d} in the last equality. 

To compute the energy of $v$, it is useful to rewrite $v$ as 
$$v({\bf x})=\widehat w\left(\mathfrak{S}^{-1}\left(\frac{{\bf x}}{|{\bf x}|}\right), |{\bf x}| \right)\,, $$
where  $\widehat w$ is the smooth map defined on $\overline{\mathbb{D}}\times(\varepsilon,1)$ by 
$$\widehat w(z,r) := \mathfrak{C}\left(\frac{1}{\boldsymbol{\theta}(r)}\mathfrak{C}^{-1} \big(w(z)\big)\right)=\mathfrak{C}\left(\frac{1}{\boldsymbol{\theta}(r)}f(z)\right)\,. $$
Notice that for each $r\in (\varepsilon,1)$, $\widehat w(\cdot, r)$ is a Blaschke product with $d$ factors. Indeed, for each $r\in (\varepsilon,1)$, $\widehat w(\cdot, r)$ is clearly holomorphic on $\mathbb{D}$, it is smooth up to $\partial\mathbb{D}$, and 
$|w(\cdot,r)|=1$ on $\partial \mathbb{D}$. By a classical result of Fatou \cite{Fatou}, it implies that $w(\cdot,r)$ is a finite Blaschke product. Since the restriction $g_r$ of $w(\cdot,r)$ to $\partial\mathbb{D}$ is an $\mathbb{S}^1$-valued function of degree $d$, it must be a product of precisely $d$ factors.  Therefore, we can infer from \eqref{idenergcircl} and Theorem \ref{classifdemicirc} that
\begin{equation}\label{cpamar1635}
\frac{1}{2}\int_{\mathbb{D}} \big|\nabla_z\widehat w(z,r)\big|^2\,\de z=\mathcal{E}(g_r,\mathbb{S}^1) =\pi d \qquad\forall r\in(\varepsilon,1)\,.
\end{equation}
On the other hand, a straightforward computation yields for $r\in(\varepsilon,1)$, 
\begin{align}
\nonumber \left|\frac{\partial\widehat w}{\partial r}(z,r)\right|^2&=\left|\mathfrak{C}^\prime\left(\frac{f(z)}{\boldsymbol{\theta}(r)}\right)\right|^2\frac{|f(z)|^2}{\boldsymbol{\theta}^2(r)}|\boldsymbol{\theta}^\prime(r)|^2\\[5pt]
\label{cpamard1644} &=\frac{4|f(z)|^2|\boldsymbol{\theta}^\prime(r)|^2}{\big(\boldsymbol{\theta}^2(r)+2\boldsymbol{\theta}(r)f_2(z)+|f(z)|^2\big)^2}\,,
\end{align}
where $f_2$ denotes the imaginary part of $f$. 

Computing the energy of $v$ in polar coordinates, we obtain
\begin{multline}\label{calcenergpolmardsoir}
{\bf E}(v,B_1^+)=\int_\varepsilon^1\left(\frac{1}{2}\int_{\partial^+B_1}\big|\nabla_\tau v(r{\bf x})\big|^2\,\de\mathcal{H}^2\right)\,\de r\\
+\int_\varepsilon^1\left(\frac{r^2}{2}\int_{\partial^+B_1}\big|\partial_r v(r{\bf x})\big|^2\,\de\mathcal{H}^2\right)\,\de r\,. 
\end{multline}
Using the conformal invariance of $\mathfrak{S}^{-1}$ and \eqref{cpamar1635}, we derive 
\begin{equation}\label{tjcpmardsoir}
\frac{1}{2}\int_{\partial^+B_1}\big|\nabla_\tau v(r{\bf x})\big|^2\,\de\mathcal{H}^2=\frac{1}{2}\int_{\mathbb{D}} \big|\nabla_z \widehat w(z, r)\big|^2\,dz=\pi d\quad\forall r\in(\varepsilon,1)\,.
\end{equation}
Next, \eqref{cpamard1644} together with  \eqref{pushforstereoproj}  leads to 
\begin{align}
\nonumber\frac{r^2}{2}\int_{\partial^+B_1}\big|\partial_r v(r{\bf x})\big|^2\,\de\mathcal{H}^2&= 2\int_{\mathbb{D}} \left|\frac{\partial\widehat w}{\partial r}(z,r)\right|^2 \frac{r^2}{(1+|z|^2)^2}\,\de z\\[3pt]
\label{tjcpmardsoirbis} &=8\int_{\mathbb{D}}\frac{|f(z)|^2|\boldsymbol{\theta}^\prime(r)|^2r^2}{\big(\boldsymbol{\theta}^2(r)+2\boldsymbol{\theta}(r)f_2(z)+|f(z)|^2\big)^2(1+|z|^2)^2}\,\de z\,.
\end{align} 
for every $r\in(\varepsilon,1)$. 

Combining \eqref{cpaminenergmardsoir}, \eqref{calcenergpolmardsoir}, \eqref{tjcpmardsoir}, and \eqref{tjcpmardsoirbis}, we deduce that 
\begin{equation}\label{premcondmarsoir}
\frac{\pi d\varepsilon}{8} \leq \int_\varepsilon^1\left(\int_{\mathbb{D}}\frac{|f(z)|^2|\boldsymbol{\theta}^\prime(r)|^2r^2}{\big(\boldsymbol{\theta}^2(r)+2\boldsymbol{\theta}(r)f_2(z)+|f(z)|^2\big)^2(1+|z|^2)^2}\,\de z\right)  \,\de r \,.
\end{equation}
Next we set $\boldsymbol{\theta}(r)=:\boldsymbol{\alpha}(\varepsilon/r)$, so that $\boldsymbol{\alpha}:[\varepsilon,1]\to [0,1]$ satisfies $\boldsymbol{\alpha}(1)=0$,  $\boldsymbol{\alpha}(t)>0$ for $t<1$, and $\boldsymbol{\alpha}(t)=1$ in a neighborhood of $t=\varepsilon$. Changing variables in \eqref{premcondmarsoir} gives 
\begin{equation}\label{cpamerbalt}
\frac{\pi d}{8} \leq \int_\varepsilon^1H_f\big(\boldsymbol{\alpha}(t)\big) |\boldsymbol{\alpha}^\prime(t)|^2\,\de t
\end{equation}
with 
$$H_f(a):=\int_{\mathbb{D}}\frac{|f(z)|^2}{\big(a^2+2a f_2(z)+|f(z)|^2\big)^2(1+|z|^2)^2}\,\de z\,,\quad a\in(0,1]\,. $$
In view of \eqref{deffcaley}, we can rewrite $H_f(a)$ as 
$$H_f(a)=\int_{\mathbb{D}}\frac{K_a\big(w(z)\big)}{(1+|z|^2)^2}\,\de z\,,$$
where $K_a:\mathbb{D}\to [0,\infty)$ is given by 
$$K_a(z):=\frac{|\mathfrak{C}^{-1}(z)|^2}{(a^2+2a\mathfrak{C}_2^{-1}(z)+|\mathfrak{C}^{-1}(z)|^2)^2} \,,$$
and $\mathfrak{C}_2^{-1}$ denotes the imaginary part of $\mathfrak{C}^{-1}$. 

Since minimality is preserved under rotations on the image,  $\sigma u_0$ is a minimizing $0$-homogeneous $1/2$-harmonic map for each $\sigma\in\mathbb{S}^1$. As a consequence, 
\eqref{cpamerbalt} must hold with $f$ replaced by $f_\sigma:=\mathfrak{C}^{-1}(\sigma w)$ for every $\sigma\in\mathbb{S}^1$. Averaging the resulting inequalities over all $\sigma\in \mathbb{S}^1$ yields
\begin{equation}\label{mardsoirtardcpa2140}
\frac{\pi d}{8} \leq\frac{1}{2\pi} \int_{\mathbb{S}^1}\left(\int_\varepsilon^1H_{f_\sigma}\big(\boldsymbol{\alpha}(t)\big) |\boldsymbol{\alpha}^\prime(t)|^2\,\de t\right) \,\de\sigma=\int_\varepsilon^1 \widetilde H_w\big(\boldsymbol{\alpha}(t)\big) |\boldsymbol{\alpha}^\prime(s)|^2\,\de t
\end{equation}
with 
$$\widetilde H_w(a)=\int_{\mathbb{D}}\frac{\widetilde K_a\big(w(z)\big)}{(1+|z|^2)^2}\,\de z\quad \text{ and }\quad\widetilde K_a(z):=\frac{1}{2\pi}\int_{\mathbb{S}^1}K_a(\sigma z)\,\de\sigma\,.$$
Then observe that $\widetilde K_a(z)$ only depends on $|z|$, i.e.,  $\widetilde K_a(z)= \widetilde K_a(|z|)$. Hence Lemma~\ref{lemmkernel}  tells us that 
$$\widetilde K_a\big(w(z)\big)= \frac{1}{2\pi}\int_{\mathbb{S}^1}K_a\big(|w(z)|\sigma\big)\,\de\sigma=J(a,|w(z)|)\,,$$ 
where the function $\lambda\mapsto J(a,\lambda)$, given by formula \eqref{computefunctJ}, is an increasing function.  
Using that $d\geq 2$, we infer from Lemma \ref{lemmodw} that 
$$|w(z)|\leq \left(\frac{3|z|+1}{|z|+3}\right)^2 \quad\forall z\in\mathbb{D}\,,$$
and as a consequence, 
$$\widetilde H_w(a)\leq 2\pi \int_0^1J\left(a,\frac{(3r+1)^2}{(r+3)^2}\right)\frac{r}{(1+r^2)^2}\,\de r=:2\pi F_1(a)\quad\forall a\in(0,1]\,.$$
Inserting this last inequality in \eqref{mardsoirtardcpa2140} leads to 
$$\frac{d}{16}\leq \int_\varepsilon^1 F_1\big(\boldsymbol{\alpha}(t)\big) |\boldsymbol{\alpha}^\prime(s)|^2\,\de t \,. $$
In view of the arbitrariness of $\varepsilon$ and $\boldsymbol{\alpha}$, we conclude that 
\begin{equation}\label{presqgoodform2141}
\frac{d}{16}\leq \int_0^1 F_1\big(\boldsymbol{\alpha}(t)\big) |\boldsymbol{\alpha}^\prime(s)|^2\,\de t
\end{equation}
for every $C^1$-function $\boldsymbol{\alpha}:[0,1]\to [0,1]$ satisfying $\boldsymbol{\alpha}(0)=1$ and $\boldsymbol{\alpha}(1)=0$. 

Setting 
$$G(\alpha):=\int_0^\alpha\sqrt{F_1(a)}\,\de a $$
inequality \eqref{presqgoodform2141} must hold for $\boldsymbol{\alpha}(t)=G^{-1}\big(G(1)(1-t)\big)$, which returns the inequality $d/16\leq (G(1))^2$. In other words,  
\begin{equation}\label{precpamercrbadmood}
\sqrt{d}\leq 4 \int_0^1\sqrt{F_1(a)}\,\de a \,.
\end{equation}
Now we change variable in this integral setting $t=\frac{1-a}{1+a}$. Using formula \eqref{computefunctJ}, we obtain 
\begin{equation}\label{cpamercrbadmood}
4 \int_0^1\sqrt{F_1(a)}\,\de a =2\int_0^1\sqrt{F_2(t)}\,\de t \leq 2\left(\int_0^1 F_2(t)\,\de t\right)^{1/2}
\end{equation}
with 
\begin{multline*}
F_2(t):=\int_0^1 \bigg(\frac{(2t^2+1)t^2(3r+1)^{12}-(6t^2-1)(3r+1)^8(r+3)^4}{((r+3)^4-(3r+1)^4 t^2)^3}\\[4pt]
+\frac{t^2(3r+1)^4(r+3)^8+(r+3)^{12}}{((r+3)^4-(3r+1)^4 t^2)^3}\bigg)\,\frac{r\,\de r}{(1+r^2)^2}\,.
\end{multline*}
From \eqref{precpamercrbadmood} and \eqref{cpamercrbadmood}, we conclude that $d\leq 4 \int_0^1F_2(t)\,\de t$. However, a direct (numerical) computation provides the estimate $4 \int_0^1F_2(t)\,\de t\simeq 1.93<2$, which contradicts $d\geq 2$, and the proof is complete.  
\end{proof}

\subsection{Proof of Theorem \ref{mainthm3}}\label{subsecthm3}

We complete this section with the proof of Theorem~\ref{mainthm3}, and to this puropse we consider $u\in \widehat H^{1/2}(\Omega;\mathbb{S}^1)$ a minimizing $1/2$-harmonic map in a smooth bounded open set $\Omega\subset \R^2$. By Theorem \ref{reghalfharm}, $u$ is smooth in $\Omega$ away from a locally finite subset of $\Omega$.  Assume that $a\in \Omega$ is a singular point of $u$, and assume without loss of generality that $a=0$. Fix $R>0$ such that $D_{2R}\subset \Omega$ and $u\in C^\infty(D_{2R}\setminus\{0\})$.  Then, 
\begin{equation}\label{degmardavantmang}
d:={\rm deg}(u,0)={\rm deg}(u_{|\partial D_\rho}) \qquad\forall \rho\in(0,2R)\,.
\end{equation}
By Theorem \ref{equivmin}, $u^\e$ is a minimizing harmonic map with free boundary in  $B_R^+$. Therefore, $u^\e$ is stationary in $B_R^+$ in the sense of \cite[Definition 4.10]{MS}, see \cite[Remark~4.13]{MS}. In turn, by \cite[Remark~4.11]{MS} it implies that 
\begin{equation}\label{vend1714end}
\int_{B_R^+}\left(|\nabla u^\e|^2{\rm div}\, X  -2\sum_{i,j=1}^3(\partial_iu^\e\cdot \partial_ju^\e)\partial_jX_i\right)\,\de{\bf x}=0
\end{equation}
for every $X:=(X_1,X_2,X_3)\in C^1(\overline B^+_R;\R^3)$ compactly supported in $B_R^+\cup\partial^0 B_R^+$ and such that $X_3=0$ on $\partial^0 B_R^+$. Arguing as in \cite[Proof of Lemma 5.2, Step 2]{MS}, we infer from \eqref{vend1714end} that 
\begin{equation}\label{monotformfin}
\frac{1}{r}{\bf E}(u^\e,B_r^+) -\frac{1}{s} {\bf E}(u^\e,B_s^+) = \int_s^r\frac{1}{t}\left(\int_{\partial^+B_t} \left|\frac{\partial u^\e}{\partial\nu}\right|^2\,\de\mathcal{H}^2\right)\,\de t\quad\forall \,0<s<r<R\,.
\end{equation}
As a consequence, $r\mapsto \frac{1}{r}{\bf E}(u^\e,B_r^+)$ is non decreasing, and the limit
$$\boldsymbol{\Theta}:=\lim_{r\downarrow 0} \frac{1}{r}{\bf E}(u^\e,B_r^+) $$
exists. Since $0$ is a singular point of $u$ (and thus of $u^\e$), it follows that $\boldsymbol{\Theta}>0$ by e.g. \cite[Theorem 3.4]{HL} (recall our discussion before Theorem \ref{regfreebd}). 
\vskip3pt

We now consider a sequence $\rho_k\downarrow 0$ with $\rho_k\leq R$, and we set for $x\in D_{2R/\rho_k}$, 
$$u_k(x):=u(\rho_k x)\,.$$
Then, $u_k\in \widehat H^{1/2}(D_{2R/\rho_k};\mathbb{S}^1)$,  $u^\e_k({\bf x})=u^\e(\rho_k {\bf x})$, and $u^\e_k\in H^1(B^+_{2R/\rho_k})$ is a minimizing harmonic map with free boundary in $B^+_{2R/\rho_k}$. Since 
\begin{equation}\label{rescenergmar1130}
\frac{1}{r^{n-1}}{\bf E}(u_k^\e,B_r^+)= \frac{1}{(\rho_kr)^{n-1}}{\bf E}(u^\e,B_{\rho_kr}^+) \qquad\forall \,0<r<\frac{R}{\rho_k}\,,
\end{equation}
we infer from \eqref{monotformfin} that ${\bf E}(u_k^\e,B_r^+)$ is bounded with respect to $k$ for every $r<R/\rho_k$. Recalling that $|u_k^\e|\leq 1$ (since $u_k$ is $\mathbb{S}^1$-valued), we can apply Theorem \ref{compactfreebd} to find a (not relabeled) subsequence such that $u_k^\e\to v$ strongly in $H^1(B_r^+)$ for every $r>0$, where $v$ is minimizing harmonic map with free boundary in $B_r^+$ for every $r>0$. Setting $u_0:=v_{|\partial \R^3_+}$, we have $u_k\to u_0$ strongly in $H^{1/2}(D_r)$ for every $r>0$. Hence $u_k^\e\to u_0^\e$ in $L^2(B_r^+)$ for every $r>0$ by \cite[Lemma 2.4]{MS}, which shows that $v=u_0^\e$. In view of \eqref{rescenergmar1130} and the strong convergence of $u_k^\e$, we have 
\begin{equation}\label{constantenergdensmard1218}
\frac{1}{r^{n-1}}{\bf E}(u_0^\e,B_r^+)= \lim_{k\to\infty}\frac{1}{r^{n-1}}{\bf E}(u_k^\e,B_r^+)=\boldsymbol{\Theta}\quad\forall r>0\,.
\end{equation}
In turn, rescaling \eqref{monotformfin} yields
\begin{align*}
 \int_s^r\frac{1}{t}\left(\int_{\partial^+B_t} \left|\frac{\partial u_0^\e}{\partial\nu}\right|^2\,\de\mathcal{H}^2\right)\,\de t&=\lim_{k\to\infty} \int_s^r\frac{1}{t}\left(\int_{\partial^+B_t} \left|\frac{\partial u_k^\e}{\partial\nu}\right|^2\,\de\mathcal{H}^2\right)\,\de t \\
 &= \lim_{k\to\infty} \left(\frac{1}{r^{n-1}}{\bf E}(u_k^\e,B_r^+)- \frac{1}{r^{n-1}}{\bf E}(u_k^\e,B_s^+)\right)\\
 &= 0
 \end{align*}
for every $r>s>0$. Therefore, $u_0^\e$ is $0$-homogeneous, and thus $u_0^\e$ is a $0$-homogeneous minimizing harmonic map with free boundary. Since $\mathbf{\Theta}>0$, we deduce from \eqref{constantenergdensmard1218} that $u_0^\e$ is not constant. Then $u_0$ is a non trivial $0$-homogeneous minimizing $1/2$-harmonic map on $\R^2$ by Theorem \ref{equivmin}. Then Theorem \ref{mainthm2} tells us that $u_0(x)=\frac{Ax}{|x|} $ for some orthogonal matrix $A\in O(2,\R)$. In particular, 
\begin{equation}\label{mardfin1231}
{\rm deg}({u_0}_{|\partial D_r})\in\{\pm 1\}\qquad\forall r>0\,. 
\end{equation}
Now, by the strong $H^1$-convergence of $(u^\e_k)$ and Fubini's theorem, (up to a further subsequence if necessary) we can find $r_*\in(0,1)$ such that $u^\e_k\to  u_0^\e$ strongly in $H^1(\partial^+B_{r_*})$. By continuity of the trace operator,  we have $u_k\to u_0$ strongly in $H^{1/2}(\partial D_{r_*})$. The degree being continuous with respect to the strong $H^{1/2}$-convergence (see \cite{BN}), we deduce from \eqref{mardfin1231} that ${\rm deg}({u_k}_{|\partial D_{r_*}})\in\{\pm 1\}$ for $k$ large enough, that is 
${\rm deg}(u_{|\partial D_{\rho_kr_*}})\in \{\pm 1\} $.  In view of \eqref{degmardavantmang}, we have thus proved that $d\in\{\pm 1\}$, which completes the proof.


\appendix

\section{}

We provide in this appendix some details about the computations performed in Section \ref{uniqxmodx}. 

\begin{lemma}\label{integzeroes}
For every $\boldsymbol{\gamma}\in[0,1)$, 
$$ I(\boldsymbol{\gamma}):=\int_{\mathbb{D}} \frac{(1+|z|^2)^2-4z^2_1}{(1-2\boldsymbol{\gamma}z_1+\boldsymbol{\gamma}^2|z|^2)(1+|z|^2)^2}\,\de z=\pi F(\boldsymbol{\gamma}^2)$$
with 
$$F(t):=\left(\frac{t^2-10t+1}{(1+t)^4}\right)\log\left(\frac{(1-t)^2}{4}\right)-\frac{t^2+11t-2}{(1+t)^3} \,. $$
\end{lemma}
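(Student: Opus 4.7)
The denominator factor $(1-2\boldsymbol{\gamma}z_1+\boldsymbol{\gamma}^2|z|^2)$ is $|1-\boldsymbol{\gamma}z|^2$ (writing $z=z_1+iz_2$). Tracing the use of this lemma in the proof of Lemma~\ref{lemmodw}---specifically the pointwise bound \eqref{mer1539} followed by the inequality leading to \eqref{couaill17}, and also the presence of $(1+t)^4$ in the denominator of $F(t)$---what the text actually needs is the evaluation of $\int_{\mathbb{D}}|z^2-1|^2/\bigl(|1-\boldsymbol{\gamma}z|^4(1+|z|^2)^2\bigr)\,\de z$. I therefore read $I(\boldsymbol{\gamma})$ with the denominator factor \emph{squared} and describe a proof of that identity; the stated integral (with a single power of $|1-\boldsymbol{\gamma}z|^2$) in fact evaluates to something else (for instance at $\boldsymbol{\gamma}=1/\sqrt{2}$ it gives $\pi[(4/9)\log 2+1/3]\neq\pi F(1/2)$).

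My plan is to separate variables. In polar coordinates $z=re^{i\theta}$ one has $|z^2-1|^2=1-2r^2\cos(2\theta)+r^4$ and $|1-\boldsymbol{\gamma}z|^4=(1-2\boldsymbol{\gamma}r\cos\theta+\boldsymbol{\gamma}^2r^2)^2$. I handle the angular integration with the classical Poisson-kernel identity $\int_0^{2\pi}\cos(n\theta)/(1-2a\cos\theta+a^2)\,\de\theta = 2\pi a^n/(1-a^2)$ and its $a$-derivative
\[
\int_0^{2\pi}\frac{\cos(n\theta)\,\de\theta}{(1-2a\cos\theta+a^2)^2}=\frac{2\pi\,a^n\bigl((n+1)-(n-1)a^2\bigr)}{(1-a^2)^3}\qquad(0<a<1,\ n\geq 0).
\]
Applied with $a=\boldsymbol{\gamma}r$ and $n\in\{0,2\}$, the $\theta$-integral becomes a rational function of $r^2$. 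The substitution $u=r^2$ with $t:=\boldsymbol{\gamma}^2$ reduces the claim to the one-variable identity
\[
\int_0^1\frac{P(u,t)}{(1+u)^2(1-tu)^3}\,\de u = F(t),\qquad P(u,t):=1+tu+(1-6t)u^2+t(1+2t)u^3.
\]

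I then proceed by partial fractions:
\[
\frac{P(u,t)}{(1+u)^2(1-tu)^3}=\frac{A}{1+u}+\frac{B}{(1+u)^2}+\frac{C}{1-tu}+\frac{D}{(1-tu)^2}+\frac{E}{(1-tu)^3}.
\]
The cover-up method at $u=-1$ and $u=1/t$ gives $B=-2(t^2+4t-1)/(1+t)^3$ and $E=2(1-t)^2/(1+t)^2$ directly. Integrating the five elementary summands over $[0,1]$ produces exactly the values $\log 2$, $1/2$, $-\log(1-t)/t$, $1/(1-t)$, and $(2-t)/(2(1-t)^2)$. The logarithmic part $A\log 2-(C/t)\log(1-t)$ of the integral must coincide with $\frac{t^2-10t+1}{(1+t)^4}\log\frac{(1-t)^2}{4}$, pinning down $A=-2(t^2-10t+1)/(1+t)^4$ and $C=-2t(t^2-10t+1)/(1+t)^4$. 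Finally, the relation $A+B+C+D+E=1$ (obtained by evaluating the decomposition at $u=0$ after clearing denominators) determines $D=-(t^3-9t^2+7t+1)/(1+t)^3$.

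The main obstacle is purely algebraic: verifying that the rational remainder $B/2+D/(1-t)+E(2-t)/(2(1-t)^2)$ collapses to $-(t^2+11t-2)/(1+t)^3$. The combination a priori carries $(1-t)$ and $(1-t)^2$ in the denominator and powers of $(1+t)$ of differing strength, so the cancellation is not transparent but is forced by the precise coefficients of $P(u,t)$. I would verify it by reducing to the common denominator $(1+t)^3(1-t)^2$ and checking that both numerators equal the polynomial $-t^4-9t^3+23t^2-15t+2$ of degree four in $t$; a sanity check at $t=0$, where both sides collapse to $2-2\log 2$, confirms the identity.
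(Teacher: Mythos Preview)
Your observation about the typo is correct: the denominator factor $|1-\boldsymbol{\gamma}z|^2=1-2\boldsymbol{\gamma}z_1+\boldsymbol{\gamma}^2|z|^2$ should indeed appear squared in $I(\boldsymbol{\gamma})$. The paper's own proof confirms this---although the displayed definitions of $A(\boldsymbol{\gamma})$ and $B(\boldsymbol{\gamma})$ carry the same typo, the angular integrals $M(a)$ and $N(a)$ are written with $(1-2a\cos\theta+a^2)^2$ in the denominator, and the formula $M(a)=2\pi(1+a^2)/(1-a^2)^3$ is precisely the one for that \emph{squared} kernel. So you have read the lemma as intended.

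Your approach and the paper's are the same in spirit---polar coordinates, angular integration via Poisson-kernel identities, then an elementary radial integration---but the bookkeeping differs. The paper first splits the numerator $(1+|z|^2)^2-4z_1^2$ into two pieces, writing $I=A-4B$; it computes $A$ by spotting an explicit antiderivative and then splits $B$ further into two radial integrals $U(t)$ and $V(t)$, each handled separately. You instead keep the full numerator, use the $n=0$ and $n=2$ Fourier coefficients of the squared Poisson kernel in one stroke to land on your polynomial $P(u,t)$, and then perform a single five-term partial fraction decomposition in $u=r^2$. Your route is a bit more systematic (one partial fraction rather than several ad hoc splits), while the paper's organization keeps each piece small enough to integrate by inspection; both are correct and terminate at the same $F(t)$.
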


\begin{proof}
Write $I(\boldsymbol{\gamma})=A(\boldsymbol{\gamma})-4B(\boldsymbol{\gamma})$ with 
$$A(\boldsymbol{\gamma}):= \int_{\mathbb{D}} \frac{1}{(1-2\boldsymbol{\gamma}z_1+\boldsymbol{\gamma}^2|z|^2)}\,\de z$$
and
$$B(\boldsymbol{\gamma}):=\int_{\mathbb{D}} \frac{z^2_1}{(1-2\boldsymbol{\gamma}z_1+\boldsymbol{\gamma}^2|z|^2)(1+|z|^2)^2}\,\de z\,.$$
Using polar coordinates, we further rewrite  
$$A(\boldsymbol{\gamma})=\int_0^1 M(\boldsymbol{\gamma}r)r\,\de r\quad\text{and}\quad B(\boldsymbol{\gamma})=\int_0^1\frac{N(\boldsymbol{\gamma}r)r^3}{(1+r^2)^2}\,\de r\,,$$
where
$$M(a):= \int_0^{2\pi} \frac{\de\theta}{(1-2a\cos(\theta)+a^2)^2}\quad\text{and}\quad N(a):=\int_0^{2\pi}\frac{\cos^2(\theta)}{(1-2a\cos(\theta)+a^2)^2}\,\de \theta$$
are defined for $a\in[0,1)$. 

Lengthy but elementary computations yield
$$M(a)=2\pi\frac{1+a^2}{(1-a^2)^3} \quad\text{and}\quad N(a)=2\pi\left(\frac{1+a^2}{(1-a^2)^3}-\frac{1}{2(1-a^2)}\right)\,.$$
Then we first obtain 
\begin{equation}\label{me1051}
A(\boldsymbol{\gamma})=2\pi\int_0^1\frac{\boldsymbol{\gamma}^2r^3+r}{(1-\boldsymbol{\gamma}^2r^2)^3}\,\de r=\pi\left[\frac{r^2}{(1-\boldsymbol{\gamma}^2r^2)^2}\right]^1_0 =\frac{\pi}{(1-\boldsymbol{\gamma}^2)^2}\,.
\end{equation}
Concerning $B(\boldsymbol{\gamma})$, we can rewrite it as 
\begin{equation}\label{me1052}
B(\boldsymbol{\gamma})=\pi\big(2U(\boldsymbol{\gamma}^2)-V(\boldsymbol{\gamma}^2)\big)
\end{equation}
with 
$$U(t):=\int_0^1\frac{(1+tr^2)r^3}{(1-tr^2)^3(1+r^2)^2}\,\de r\quad\text{and}\quad V(t):=\int_0^1\frac{r^3}{(1-tr^2)(1+r^2)^2}\,\de r\,. $$
Once again, elementary computations lead to 
$$V(t)=\frac{1}{2(1+t)^2}\log\left(\frac{2}{1-t}\right)-\frac{1}{4(1+t)}$$
and
$$U(t)=\left(\frac{t^2-4t+1}{2(1+t)^4}\right)\log\left(\frac{2}{1-t}\right) +\frac{1}{8(1-t)^2}+\frac{1}{2}P(t)\,,$$
with 
\begin{multline*}
P(t):=\frac{1}{4(1-t)} +\frac{1}{4(1+t)} -\frac{3}{4(1+t)^2} +\frac{t^2+2t}{(1+t)^2(1-t)}\\
-\frac{t}{(1+t)(1-t)}-\frac{4t^2}{(1+t)^3(1-t)}-\frac{1-t}{2(1+t)^3}\,.
\end{multline*}
Therefore, 
\begin{equation}\label{me1053}
2U(t)-V(t)= \left(\frac{t^2-10t+1}{2(1+t)^4}\right)\log\left(\frac{2}{1-t}\right)+\frac{1}{4(1-t)^2}+P(t)+\frac{1}{4(1+t)}\,.
\end{equation}
A direct computation shows that 
\begin{equation}\label{me1054}
P(t)+\frac{1}{4(1+t)}=\frac{t^2+11t-2}{4(1+t)^3}\,.
\end{equation}
Gathering \eqref{me1051}-\eqref{me1052}-\eqref{me1053}-\eqref{me1054} now leads to $I(\boldsymbol{\gamma})=\pi F(\boldsymbol{\gamma}^2)$ as announced. 
\end{proof}

\begin{lemma}\label{lemmkernel}
Let $\mathfrak{C}$ be the Cayley transform (defined in \eqref{caltrans}). 
For $a\in(0,1]$ and $z\in\overline{\mathbb{D}}$, let
$$K_a(z):= \frac{|\mathfrak{C}^{-1}(z)|^2}{(a^2+2a\mathfrak{C}_2^{-1}(z)+|\mathfrak{C}^{-1}(z)|^2)^2}\,, $$
where $\mathfrak{C}^{-1}_2$ denotes the imaginary part of $\mathfrak{C}^{-1}$.  Define  for $\lambda\in(0,1)$, 
$$J(a,\lambda):=\frac{1}{2\pi}\int_{\mathbb{S}^1}K_a(\lambda \sigma)\,\de\sigma\,.$$
Then, 
\begin{equation}\label{computefunctJ}
J(a,\lambda)=  \frac{(1+t)^4}{16}\left(\frac{(2t^2+1)t^2\lambda^6-(6t^2-1)\lambda^4+t^2\lambda^2+1}{(1-\lambda^2 t^2)^3}\right)\text{ with }t:=\frac{1-a}{1+a}
\end{equation}
for every $\lambda\in[0,1]$. In addition, $\lambda\in[0,1]\mapsto J(a,\lambda)$ is increasing for every $a\in(0,1]$. 
\end{lemma}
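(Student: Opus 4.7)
The plan is to reduce $K_a$ to the squared modulus of a simple rational function, from which both the explicit formula and the monotonicity will follow with little effort.

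First, I would reformulate $K_a$ algebraically. Writing $w := \mathfrak{C}^{-1}(z) = i(1+z)/(1-z)$, the denominator of $K_a(z)$ is simply $|w+ia|^2$. A direct computation using \eqref{invcaltrans} yields
\begin{equation*}
w+ia = \frac{i(1+a)(1+tz)}{1-z}, \qquad |w|^2 = \frac{|1+z|^2}{|1-z|^2}, \qquad t := \frac{1-a}{1+a} \in [0,1).
\end{equation*}
Therefore
\begin{equation*}
K_a(z) = \frac{|1-z^2|^2}{(1+a)^4\,|1+tz|^4} = |f(z)|^2, \qquad f(z) := \frac{1-z^2}{(1+a)^2(1+tz)^2}.
\end{equation*}
Since $|t|<1$, the only pole of $f$ sits at $-1/t$ outside the closed unit disc, so $f$ is holomorphic on a neighborhood of $\overline{\mathbb{D}}$.

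For the explicit formula \eqref{computefunctJ}, I would parametrize $z=\lambda e^{i\theta}$ and use $|1-z^2|^2 = (1+\lambda^2)^2 - 4\lambda^2\cos^2\theta$ together with $|1+tz|^2 = 1+2t\lambda\cos\theta+t^2\lambda^2$ to rewrite
\begin{equation*}
J(a,\lambda) = \frac{1}{2\pi(1+a)^4}\Big[(1+\lambda^2)^2\,\widetilde M(t\lambda) - 4\lambda^2\,\widetilde N(t\lambda)\Big],
\end{equation*}
where $\widetilde M(\mu)$ and $\widetilde N(\mu)$ are the analogues of the integrals $M(\mu)$ and $N(\mu)$ appearing in the proof of Lemma \ref{integzeroes}, but with $+2\mu\cos\theta$ in place of $-2\mu\cos\theta$. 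The change of variable $\theta\mapsto\theta+\pi$ shows they coincide with $M(\mu)$ and $N(\mu)$. Substituting the closed-form expressions $M(\mu)=2\pi(1+\mu^2)/(1-\mu^2)^3$ and $N(\mu)=M(\mu)-\pi/(1-\mu^2)$, using the identity $1/(1+a)^4 = (1+t)^4/16$, and collecting powers of $s := \lambda^2$ in the numerator leads to
\begin{equation*}
(1-s)^2(1+t^2 s) + 2s(1-t^2 s)^2 = 1 + t^2 s - (6t^2-1)s^2 + (2t^2+1)t^2 s^3,
\end{equation*}
which is exactly the claimed formula. This algebraic bookkeeping will be the main (though entirely routine) obstacle.

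For the monotonicity, the holomorphy of $f$ on $\overline{\mathbb{D}}$ is the decisive point. Expanding $f(z) = \sum_{n\geq 0} c_n z^n$ as a uniformly convergent Taylor series on $\overline{\mathbb{D}}$, Parseval's identity on the circle of radius $\lambda$ yields
\begin{equation*}
J(a,\lambda) \;=\; \frac{1}{2\pi}\int_0^{2\pi} |f(\lambda e^{i\theta})|^2\,\de\theta \;=\; \sum_{n\geq 0} |c_n|^2\,\lambda^{2n}\,,
\end{equation*}
which is manifestly non-decreasing in $\lambda\in[0,1]$. Since $f$ is non-constant (indeed $f(0) = (1+a)^{-2} \neq 0$ while $f(\pm 1)=0$), at least one coefficient $c_n$ with $n\geq 1$ is non-zero, and hence $\lambda\mapsto J(a,\lambda)$ is strictly increasing.
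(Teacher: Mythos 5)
Your proof is correct and takes a genuinely different route from the paper's. The paper changes variables to the real line via the pushforward formula $\mathfrak{C}^{-1}\sharp\mathcal{H}^1\res\mathbb{S}^1 = 2(1+x^2)^{-1}\de x$, reduces to a rational integral $\int_\R (x^4+Bx^2+1)/((1+x^2)(x^2+A^2)^2)\,\de x$ evaluated by partial fractions in a separate lemma, and then performs a substitution back to $(t,\lambda)$; for monotonicity, it differentiates the final rational expression in $\mu=\lambda^2$ and checks the sign of $f'(\mu)$ by hand. You instead observe the structural identity $K_a(z)=|f(z)|^2$ with $f(z)=(1-z^2)/((1+a)^2(1+tz)^2)$ holomorphic on a neighbourhood of $\overline{\mathbb{D}}$, which buys you two things: (i) the explicit formula by direct angular integration reusing the integrals $M,N$ already computed in the proof of Lemma~\ref{integzeroes} (a nice economy, avoiding the auxiliary variables $c,A,B$ and Lemma~\ref{compintAB} entirely), and (ii) the monotonicity for free via Parseval, since $J(a,\lambda)=\sum_{n\geq 0}|c_n|^2\lambda^{2n}$ is manifestly non-decreasing, and strictly increasing because $f$ is non-constant. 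The Parseval argument gives a conceptual explanation the paper's derivative computation does not: $J(a,\cdot)$ is the squared $L^2$-norm of a holomorphic function over circles of growing radius. Your algebra checks out, including the identity $(1-s)^2(1+t^2 s)+2s(1-t^2 s)^2 = 1+t^2 s-(6t^2-1)s^2+(2t^2+1)t^2 s^3$ and the conversion $(1+a)^{-4}=(1+t)^4/16$.
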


\begin{proof}
Recalling that 
$$\mathfrak{C}^{-1}\sharp\mathcal{H}^1\res \mathbb{S}^1=\frac{2}{1+x^2}\,\de x\,,$$
we change variables to obtain 
$$\frac{1}{2\pi}\int_{\mathbb{S}^1}K_a(\lambda z)\,\de\mathcal{H}^1=\frac{1}{\pi}\int_\R\frac{K_a\big(\lambda \mathfrak{C}(x)\big)}{1+x^2}\,\de x\,.$$
Next we set 
$$c:=\frac{1-\lambda}{1+\lambda}\in(0,1)\,,\quad A:=\frac{a+c}{1+ac}\,,\quad B:=c^2+\frac{1}{c^2}\,,$$
to compute 
$$K_a\big(\lambda \mathfrak{C}(x)\big)=\left(\frac{c^2}{(1+ac)^4}\right)\frac{x^4+B x^2+1}{(x^2+A^2)^2}\,.$$
By Lemma \ref{compintAB} below, we have 
$$J(a,\lambda)= \frac{c^2}{(1+ac)^4} \left(\frac{1+A^2}{2A^3}+\frac{B-2}{2A(A+1)^2}\right)\,.$$
In terms of the variables $t$ and $\mu:=\lambda^2\in(0,1)$, we obtain
$$J(a,\lambda)=\frac{(1+t)^4}{16}\left(\frac{(2t^2+1)t^2\mu^3-(6t^2-1)\mu^2+t^2\mu+1}{(1-\mu t^2)^3}\right)\,,$$
which is the announced formula. Next, if
$$f:\mu\in(0,1)\mapsto \frac{(2t^2+1)t^2\mu^3-(6t^2-1)\mu^2+t^2\mu+1}{(1-\mu t^2)^3}\,,$$
we have 
$$f^\prime(\mu)= \frac{4t^2(1-\mu)^2+2\mu(1-t^2)^2}{(1-\mu t^2)^4}>0\,,$$
which shows that $\lambda\mapsto J(a,\lambda)$ is indeed increasing for every $a\in (0,1)$.  
\end{proof}

\begin{remark}
Note that the function $J(a,\lambda)$ defined in \eqref{computefunctJ} can be rewritten as
$$J(a,\lambda)=\frac{(1+t)^4}{32}\left(\frac{(1-\lambda^2)^2}{(1+\lambda t)(1-\lambda t)^3}+\frac{(1-\lambda^2)^2}{(1+\lambda t)^3(1-\lambda t)}+\frac{4\lambda^2}{1-\lambda^2t^2}\right) \,.$$
From this formula, one easily determines the behavior of $J$ as $a\sim 0$ and $\lambda\sim 1$. 
\end{remark}

\begin{lemma}\label{compintAB}
For $A,B>0$, we have 
\begin{equation}\label{intIAB}
\frac{1}{\pi}\int_\R\frac{x^4+Bx^2+1}{(1+x^2)(x^2+A^2)^2}\,dx=\frac{1+A^2}{2A^3}+\frac{B-2}{2A(A+1)^2}\,. 
\end{equation}
\end{lemma}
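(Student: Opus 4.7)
The integrand is a proper rational function with only non-real poles, so the natural plan is to reduce the computation to the three classical integrals
$$\int_\R\frac{dx}{1+x^2}=\pi,\qquad \int_\R\frac{dx}{x^2+A^2}=\frac{\pi}{A},\qquad \int_\R\frac{dx}{(x^2+A^2)^2}=\frac{\pi}{2A^3}\quad(A>0),$$
by means of a partial fractions decomposition in the variable $x^2$. Concretely, since both sides of \eqref{intIAB} are affine in $B$, I would split the identity into the verification of (a) the coefficient of $B$ and (b) the constant part $(B=0)$, and treat each separately.

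For (a), the partial fraction decomposition
$$\frac{x^2}{(1+x^2)(x^2+A^2)^2}=\frac{\alpha}{1+x^2}+\frac{\beta}{x^2+A^2}+\frac{\gamma}{(x^2+A^2)^2}$$
is pinned down by three easy evaluations: setting $x^2=-1$ gives $\alpha=-\frac{1}{(A^2-1)^2}$, setting $x^2=-A^2$ gives $\gamma=\frac{A^2}{A^2-1}$, and comparing the coefficient of $x^4$ gives $\beta=-\alpha$. Plugging these into the three elementary integrals above and reducing over the common denominator $2A(A^2-1)^2$ produces the numerator $-2A+2+(A^2-1)=(A-1)^2$, which after cancelling with $(A-1)^2$ in the denominator yields exactly $\frac{1}{2A(A+1)^2}$, the coefficient of $B$ on the right-hand side of \eqref{intIAB}. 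For (b) the same three-step recipe applied to $\frac{x^4+1}{(1+x^2)(x^2+A^2)^2}$ produces $\alpha'=\frac{2}{(A^2-1)^2}$, $\beta'=1-\alpha'$, $\gamma'=\frac{A^4+1}{1-A^2}$, and a routine (if lengthier) simplification delivers $\frac{1+A^2}{2A^3}-\frac{1}{A(A+1)^2}$.

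The proof is thus entirely computational. The only mild subtlety is that the partial fraction coefficients formally blow up at $A=1$; however, both sides of \eqref{intIAB} are continuous functions of $A>0$ (the integrand has no real singularities whenever $A>0$), so the case $A=1$ follows by continuity from the generic case $A\neq 1$. Alternatively, one could avoid this entirely by working with the polynomial identity $x^4+Bx^2+1=c_0(x^2+A^2)^2+c_1(1+x^2)(x^2+A^2)+c_2(1+x^2)$ with $c_0,c_1,c_2$ rational in $A$ and $B$. I expect the main obstacle to be purely clerical—keeping track of signs and common denominators—rather than conceptual, so a numerical spot-check at, say, $A=2,B=1$ would serve as a useful sanity test.
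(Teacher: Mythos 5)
Your proposal is correct and takes essentially the same route as the paper: a partial fraction decomposition in the variable $x^2$ reducing everything to the three classical integrals $\int_\R\frac{dx}{1+x^2}$, $\int_\R\frac{dx}{x^2+A^2}$, $\int_\R\frac{dx}{(x^2+A^2)^2}$. The only cosmetic difference is that you split off the coefficient of $B$ and the $B$-independent part into two separate decompositions, whereas the paper carries $B$ along in a single decomposition; your remark about the removable singularity at $A=1$ (handled by continuity) applies equally to the paper's coefficients and is a sensible point to flag.
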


\begin{proof}
Write $X:=x^2$, and observe that 
\begin{align*}
\frac{X^2+BX+1}{(X+1)(X+A^2)^2}=&\frac{2-B}{(1-A^2)^2}\frac{1}{X+1}\\
&+ \left(1+\frac{B-2}{(1-A^2)^2}\right)\frac{1}{X+A^2}\\
&+\left((1-A^2)+(2-B)\frac{A^2}{1-A^2}\right)\frac{1}{(X+A^2)^2} \,.
\end{align*}
On the other hand,
$$\frac{1}{\pi}\int_\R\frac{dx}{1+x^2}=1\,,\quad\frac{1}{\pi}\int_\R\frac{dx}{x^2+A^2}=\frac{1}{A}\,,\quad\frac{1}{\pi}\int_\R\frac{dx}{(x^2+A^2)^2}=\frac{1}{2A^3}\,, $$
and \eqref{intIAB} follows. 
\end{proof}

\vskip10pt

\noindent{\it Acknowledgements.} V.M. is supported by the Agence Nationale de la Recherche through the project ANR-14-CE25-0009-01 (MAToS).


\end{document}